\DeclareRobustCommand{\SkipTocEntry}[5]{}
\definecolor{LOcolor}{RGB}{150,100,0}
\newtheorem{Theorem}{Theorem}[section]
\newtheorem{Lemma}[Theorem]{Lemma}
\theoremstyle{definition}
\newtheorem{Definition}[Theorem]{Definition}
\newtheorem{Remark}[Theorem]{Remark}
\numberwithin{equation}{section}
\newcommand{\mR}{\mathbb{R}}                    
\newcommand{\abs}[1]{\lvert #1 \rvert}          
\newcommand{\norm}[1]{\lVert #1 \rVert}         
\newcommand{\ol}[1]{\overline{#1}}
\newcommand{\supp}{\mathrm{supp}}
\newcommand{\eps}{\varepsilon}
\newcommand{\p}{\partial}
\newcommand{\tbl}{\textcolor{blue}}
\newcommand{\mikko}[1]{\begin{quotation}[\textbf{\color{blue}Mikko's comment:\
		}{\color{blue}\textit{#1}}]\end{quotation}}
\newcommand{\henrik}[1]{\begin{quotation}[\textbf{\color{red}Henrik's comment:\
		}{\color{red}\textit{#1}}]\end{quotation}}
\newcounter{sidenote}
\begin{document}

\title{A free boundary approach to non-scattering obstacles with vanishing contrast}

\author{Mikko Salo}
\address{Department of Mathematics and Statistics, P.O. Box 35 (MaD), FI-40014 University of Jyv\"{a}skyl\"{a}, Finland.}
\email{\href{mailto:mikko.j.salo@jyu.fi}{mikko.j.salo@jyu.fi}}

\author{Henrik Shahgholian}
\address{Department of Mathematics, KTH Royal Institute of Technology, SE-100 44 Stockholm, Sweden.}
\email{\href{mailto:henriksh@kth.se}{henriksh@kth.se}}




\begin{abstract}
Motivated by questions in inverse scattering theory, we develop free boundary methods in obstacle problems where both the solution and the right hand side of the equation may have varying sign. The key condition that prevents the appearance of corners is that the right hand side should be related to a harmonic polynomial. In this setting we prove new free boundary results not found in existing literature. Notably, our results imply that piecewise $C^1$ or convex penetrable obstacles in two dimensions and edge points in higher dimensions always cause nontrivial scattering of any incoming wave.
\end{abstract}

\maketitle


\section{Introduction} \label{sec_introduction}

\subsection{Motivation}

In this work we study free boundary problems motivated by applications in inverse boundary problems and inverse scattering. One such application is a single measurement inverse problem for the equation $(\Delta+q)u = 0$ in $\Omega$, where $\Omega \subset \mR^n$ is a bounded open set with smooth boundary and $q \in L^{\infty}(\Omega)$ is such that the Dirichlet problem for $\Delta+q$ in $\Omega$ has a unique solution in $H^1(\Omega)$ for any boundary data in $H^{1/2}(\p \Omega)$. We suppose that we can prescribe one Dirichlet data $f$ on $\p \Omega$, and that we can measure the corresponding Neumann data $\p_{\nu} u|_{\p \Omega}$ where $u$ is the solution of 
\[
(\Delta+q)u = 0 \text{ in $\Omega$}, \qquad u|_{\p \Omega} = f.
\]

The inverse problem is to determine some properties of an unknown potential $q$ from the measurement $\p_{\nu} u|_{\p \Omega}$ corresponding to a fixed Dirichlet data $f$. This problem is closely related to the Calder\'on problem with a single measurement (see e.g.\ \cite{Alessandrini1999, LiuTsou2020} and references therein). An analogous single measurement problem arises in inverse scattering at a fixed frequency $k > 0$. There one would like to determine some properties of a compactly supported potential (or index of refraction) $q \in L^{\infty}(\mR^n)$ by measuring the far field pattern $u_q^{\infty}$ of the outgoing solution of 
\[
(\Delta+k^2+q) u = 0 \text{ in $\mR^n$}
\]
that corresponds to a fixed incoming wave $u_0$ solving $(\Delta+k^2)u_0 = 0$ in $\mR^n$. For a detailed discussion of these applications and for the definition of $u_q^{\infty}$ we refer the reader to \cite{SaloShahgholian}. 

In both cases above the measurement is a function on an $(n-1)$-dimensional manifold, hence it depends locally on $n-1$ variables, and it is expected that one can only recover partial information on the unknown function $q(x)$ that depends on $n$ variables. In particular one might hope to recover $D = \supp(q)$, i.e.\ the shape of the scattering obstacle. It has been observed that some obstacles are \emph{non-scattering}, or \emph{invisible}, in the sense that they may produce the same measurement as empty space (i.e.\ $q=0$ or $D = \emptyset$). On the other hand, many obstacles whose boundary has a singularity have been proved to scatter every incident wave nontrivially in the sense that $u_q^{\infty} \not\equiv 0$ for any incident wave $u_0 \not\equiv 0$.

This research on non-scattering frequencies was initiated in \cite{BlastenPaivarintaSylvester} where it was proved that obstacles with a rectangular corner point always scatter. There have since been several extensions and related single measurement results. The method of \cite{BlastenPaivarintaSylvester} was based on complex geometrical optics solutions coming from the Calder\'on problem and Laplace transforms. Related methods have been pursued e.g.\ in \cite{PaivarintaSaloVesalainen, HuSaloVesalainen, BlastenLiu1, BlastenLiu2}. Another interesting method based on boundary value problems in corner domains has been developed in \cite{ElschnerHu1, ElschnerHu2}. The case of planar obstacles with weakly singular boundary points has been discussed in \cite{LiHuYang2023}. These methods apply in cases where $\p D$ is assumed piecewise regular, and they are most complete when $n=2$ and become more limited even for $n=3$. Many further references may be found in \cite{SaloShahgholian, Liu_survey}.

The works \cite{CakoniVogelius, SaloShahgholian} introduced new methods from the theory of free boundary problems to deal with non-scattering problems (for the Calder\'on problem this was already done in \cite{AlessandriniIsakov}). These methods have the benefit that they apply in any dimension $n \geq 2$ and do not require that $\p D$ is piecewise regular, e.g.\ $D$ could be a Lipschitz domain or even a rather general open set. The results state that if $D$ is a non-scattering obstacle, then its boundary is a free boundary as in the (no-sign) obstacle problem, and hence locally either $\p D$ is very regular or it has thin complement. This implies for instance that if $\p D$ is Lipschitz but not $C^1$, then $D$ scatters every incident wave nontrivially. For precise results see \cite{CakoniVogelius, SaloShahgholian} and the related works \cite{VogeliusXiao2021, CakoniVogeliusXiao2023, KLSS2024, KSS2024a, KSS2024b}.

However, these results based on free boundary methods are in most cases only valid under a \emph{nonvanishing assumption}: the incoming wave $u_0$ should not vanish anywhere on $\p D$. Since real valued solutions of $(\Delta+k^2)u_0 = 0$ have many zeros in general (see e.g.\ \cite[Lemma 3.1]{SaloShahgholian}), this assumption is not guaranteed to hold in many cases of interest. The nonvanishing assumption is not required in the results mentioned above based on complex geometrical optics or corner domains. It would be highly desirable to remove this assumption also in the free boundary approach.

In our first main theorem we remove the nonvanishing assumption in the free boundary approach for certain two-dimensional domains. This will be done for potentials $q = h \chi_D$ where the contrast $h$ satisfies the following non-degeneracy condition at some $x_0 \in \p D$:
\begin{equation} \label{nondegen}
\text{there is $r > 0$ such that $h \in C^{\alpha}(\ol{D} \cap B_r(x_0))$ and $h(x_0) \neq 0$}.
\end{equation}

\begin{Definition}\label{piecewise-C1}(Piecewise $C^1$ domains)
We say that $D \subset \mR^2$ has \emph{piecewise $C^1$ boundary} if for any $x_0 \in \p D$, after a rotation and translation taking $x_0$ to $0$, and for some $r > 0$ one has 
\[
D \cap B_r(0) = \{ x_2 > \eta(x_1) \} \cap B_r(0)
\]
where $\eta \in C([-r,r])$, and both $\eta|_{[-r,0]}$ and $\eta|_{[0,r]}$ are $C^1$ functions up to the endpoints of the respective closed intervals.
\end{Definition}

\begin{Theorem} \label{thm_main_twodim}
Let $D \subset \mR^2$ be a bounded open set, and suppose that 
\begin{itemize}
\item 
$D$ is convex; or 
\item 
$D$ is  simply connected and has piecewise $C^1$ boundary.
\end{itemize}
Let $k > 0$, and suppose that $h \in L^{\infty}(D)$ satisfies the non-degeneracy condition \eqref{nondegen} at $x_0 \in \p D$. If $\p D$ is not $C^1$ near $x_0$, then the potential $q = h \chi_D$ scatters every incident wave non-trivially. In other words, for any incident wave $u_0 \not\equiv 0$ solving $(\Delta+k^2)u_0=0$, the far field pattern $u_q^{\infty}$ is not identically zero.
\end{Theorem}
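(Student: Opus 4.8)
The plan is to argue by contradiction: suppose $u_q^\infty \equiv 0$ for some incident wave $u_0 \not\equiv 0$. By Rellich's lemma and unique continuation, the scattered field then vanishes outside $D$, so the total field $u$ satisfies $(\Delta + k^2 + q)u = 0$ in $\mR^n$ while $u = u_0$ and $\partial_\nu u = \partial_\nu u_0$ on $\partial D$. Setting $w = u - u_0$, one gets that $w \in H^2$ solves $\Delta w = q u_0 + qw = h\chi_D(u_0 + w)$ with $w = \partial_\nu w = 0$ on $\partial D$ (Cauchy data vanishing), i.e.\ $w$ satisfies a no-sign obstacle-type problem $\Delta w = f \chi_D$ where $f = h(u_0+w)$ does \emph{not} have a fixed sign and may vanish. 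The key new idea (following the abstract) is that near $x_0$ the relevant right-hand side, after subtracting the solution of a Dirichlet problem with data coming from $u_0$, should be comparable to a harmonic polynomial; this is exactly the structural condition that forbids corner formation. Concretely, because $u_0$ is real-analytic and solves $(\Delta+k^2)u_0 = 0$, near $x_0$ it agrees to leading order with a harmonic polynomial $P_N$ of some degree $N \geq 0$ (if $u_0(x_0) \neq 0$ then $N=0$ and we are in the classical nonvanishing case already handled by \cite{SaloShahgholian}; the new content is $N \geq 1$). The first step is therefore to reduce, via this Taylor/harmonic expansion and the non-degeneracy condition \eqref{nondegen}, to a clean free boundary problem for $w$ with right-hand side behaving like $c\, P_N \chi_D$ near $x_0$, $c \neq 0$.

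The second step is the new free boundary theory itself. One wants to classify the possible blow-ups of $w$ at $x_0$ at the scale matching the vanishing order $N$: rescale $w_r(x) = w(x_0 + rx)/r^{N+2}$ and extract a limit $w_0$ solving $\Delta w_0 = c\, P_N \chi_{D_0}$ on a blow-up domain $D_0$ (a cone in the convex case, or a half-plane/sector in the piecewise $C^1$ case). The heart of the argument is to show that such global solutions on cones cannot exist unless $D_0$ is a half-plane, i.e.\ $\partial D$ is $C^1$ at $x_0$. In two dimensions this should be tractable by complex-analytic methods: writing $\Delta w_0 = c\,\re(z^N)\chi_{D_0}$ (after rotating $P_N$), one can solve the Poisson equation explicitly with a particular solution that is a polynomial of degree $N+2$ times $\chi_{D_0}$ plus a harmonic function, and then the vanishing Cauchy data of $w_0$ on $\partial D_0$ together with convexity/piecewise-$C^1$ structure of $D_0$ forces a contradiction unless the opening angle is $\pi$. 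This is where convexity (the complement $D_0^c$ is convex, hence a half-plane or a single sector) and simple connectedness plus piecewise $C^1$ (only two $C^1$ arcs meeting at $x_0$) are used to reduce the blow-up geometry to a one-parameter family of sectors.

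The third step is to rule out each remaining sector using an explicit computation or a monotonicity/frequency argument: on a sector of angle $\theta \neq \pi$, the homogeneous-degree-$(N+2)$ solution of $\Delta w_0 = c\,\re(z^N)$ with zero Cauchy data on the two edges must match specific trigonometric data, and the resulting transcendental equation in $\theta$ has no solution with $\theta \in (0,2\pi) \setminus \{\pi\}$ when $c \neq 0$ — essentially because the particular solution already fixes the "obstacle" value and the edges cannot both be free. Finally, one upgrades the blow-up statement to a genuine statement about $\partial D$ near $x_0$: non-existence of a bad blow-up plus the non-degeneracy \eqref{nondegen} and standard obstacle-problem regularity (Caffarelli-type) gives that $\partial D$ is $C^1$ (indeed $C^{1,\alpha}$) near $x_0$, contradicting the hypothesis that $\partial D$ is not $C^1$ there.

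\textbf{Main obstacle.} The principal difficulty is the blow-up classification in the second and third steps: proving that a global two-dimensional solution of $\Delta w_0 = c\,P_N\,\chi_{D_0}$ with vanishing Cauchy data on $\partial D_0$ and $D_0$ a cone forces $D_0$ to be a half-plane. Unlike the classical obstacle problem, the right-hand side changes sign (it is $c\,\re(z^N)$, which vanishes on $N$ lines through the origin), so standard non-degeneracy and monotonicity formulas do not apply directly; the harmonic-polynomial structure must be exploited precisely to compensate for the lack of a sign, and handling the interaction between the nodal lines of $P_N$ and the edges of the cone $D_0$ is the delicate point. A secondary technical issue is ensuring the blow-up limit is taken at the correct homogeneity $N+2$ and that the rescaled solutions converge strongly enough (in $C^{1}_{loc}$) for the limiting equation and boundary conditions to pass to the limit.
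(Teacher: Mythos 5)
Your overall strategy closely tracks the paper's: reduce to $\Delta w = f\chi_D$ with $f = -hu_0$, observe that near $x_0$ the leading Taylor term of $u_0$ is a harmonic homogeneous polynomial of some degree $m$, blow up at rate $m+2$, and rule out sectors of opening angle $\neq \pi$ via an explicit computation leading to a transcendental equation in the angle. The main pieces — the harmonic-polynomial right-hand side, nondegeneracy, and the sector classification via a trigonometric determinant condition — all appear in the paper (Lemmas~\ref{lemma_half_space_twodim}, \ref{lemma_twodim_sector}, \ref{lemma_nondegeneracy}, Theorem~\ref{thm_blowup_classification}), and the equation you anticipate is $\sin\theta_0 = \pm\frac{1}{m+1}\sin((m+1)\theta_0)$, whose nonexistence of solutions $\theta_0 \in (0,\pi)$ is taken from \cite{PaivarintaSaloVesalainen}.

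There is, however, a genuine gap at the final step. You claim that once blow-ups are classified as half-space solutions, ``standard obstacle-problem regularity (Caffarelli-type) gives that $\partial D$ is $C^1$ (indeed $C^{1,\alpha}$).'' That machinery is not available here: Caffarelli-type improvement-of-flatness requires the right-hand side to be bounded away from zero near the free boundary, whereas here it vanishes to order $m$ at $x_0$; indeed, uniqueness of blow-ups and $C^{1,\alpha}$ regularity are listed explicitly as open problems in the paper even for the model case $f(x)=x_1$. The paper bypasses this with a weaker step that still suffices: the \emph{weak flatness} property (Lemma~\ref{lemma_weak_flatness}), which says $\partial D\cap B_r \subset \{|x\cdot e|\le \delta r\}$ for any $\delta>0$ at small enough $r$, and then it is the a priori geometric hypothesis — two $C^1$ arcs meeting at an angle, or convexity combined with Lemma~\ref{lemma_convex_c1} on unique supporting lines — that forces the angle to be $\pi$ and upgrades weak flatness to genuine $C^1$ regularity. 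Without invoking the a priori structure of $D$ at exactly this point, weak flatness alone does not give $C^1$. A second, smaller gap is that to extract a blow-up that is actually homogeneous of degree $m+2$, one first needs the optimal $C^{1,1}$ regularity with sharp decay $|u|\le C|x|^{m+2}$ (Theorem~\ref{thm_c11_regularity}) for compactness, and then a modified Weiss monotonicity functional $W_H(r,u)+F(r,u)$ adapted to the vanishing right-hand side (Section~\ref{sec_blowup_solutions}) for homogeneity; you flag this as a secondary technical issue, but identifying and establishing that monotone quantity is a substantive part of the argument.
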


The theorem states that convex or piecewise $C^1$ planar obstacles whose boundary has a singular (non-$C^1$) point always scatter. We recall that bounded convex domains always have Lipschitz boundary \cite[Corollary 1.2.2.3]{Grisvard}.
The case of piecewise $C^2$ planar domains was proved in \cite{ElschnerHu2}. Note that proving the theorem under \eqref{nondegen} is equivalent to proving it under Assumption (a) in \cite{ElschnerHu2}, see Remark \ref{rem_eh_assumption}. A similar result for piecewise real-analytic domains that are $C^{k-1}$ but not $C^k$ for some $k$ is given in \cite{LiHuYang2023}.

We obtain an analogous result for domains in $\mR^n$, $n \geq 3$, that contain edge singularities, see Theorem \ref{thm_main_ndim}.

\begin{Definition}(Edge points) \label{def_edge_point}
Let $D \subset \mR^n$, $n \geq 3$, be an open set. We say that $x_0 \in \p D$ is an \emph{edge point} if there is $\delta > 0$ and a $C^1$ diffeomorphism $\Phi: B_\delta(x_0) \to V$ onto an open set $V \subset \mR^n$ such that $\Phi(x_0) = 0$, $D\Phi(x_0)$ is an rotation matrix, and 
\[
\Phi(\ol{D} \cap B_\delta(x_0)) = (S \times \mR^{n-2}) \cap \Phi(B_\delta(x_0)),
\]
where $S = \{ (r \cos \theta, r \sin \theta) \,:\, r > 0,\ 0 \leq \theta \leq \theta_0 \}$ is a closed sector with angle $\theta_0 \in (0,2\pi) \setminus \{\pi\}$.
\end{Definition}

Examples of domains with edge points include curvilinear polyhedra in $\mR^3$ (see e.g.\ \cite{ElschnerHu2}), and more generally curved polytopes in $\mR^n$. For instance, suppose that $0 \in \p D$ and 
\[
D \cap B_r = \{ x \in B_r \,:\, f_1 > 0, \ldots, f_N > 0 \}
\]
where $f_j: \mR^n \to \mR$ are $C^1$ functions such that $f_j(0) = 0$, $\nabla f_j(0) \neq 0$, and $\frac{\nabla f_1(0)}{|\nabla f_1(0)|}, \ldots, \frac{\nabla f_N(0)}{|\nabla f_N(0)|}$ are distinct. If $N=2$ then $0$ is an edge point, and if $N \geq 3$ then $0$ is a (convex) polyhedral singular point. Then points on $\{ f_j = 0 \} \cap \{ f_k = 0\}$ near $0$ are edge points when $j \neq k$.

\begin{Theorem} \label{thm_main_ndim}
Let $D \subset \mR^n$, $n \geq 3$, be a bounded Lipschitz domain such that $\mR^n \setminus D$ is connected and $\p D$ contains an edge point $x_0 \in \p D$. Let $k > 0$, and let $h$ satisfy \eqref{nondegen} at $x_0$. Then the potential $q = h \chi_D$ scatters every incident wave nontrivially, i.e.\ for any incident wave $u_0 \not\equiv 0$ solving $(\Delta+k^2)u_0=0$ the far field pattern $u_q^{\infty}$ is not identically zero.
\end{Theorem}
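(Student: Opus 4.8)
The plan is to argue by contradiction, reducing the scattering problem to a free boundary problem and then showing that an edge point is incompatible with the free boundary structure. Suppose that $u_0 \not\equiv 0$ is an incident wave whose far field pattern vanishes, $u_q^\infty \equiv 0$. By Rellich's lemma the total field $u$ and the incident field $u_0$ agree outside any ball containing $D$, and hence $w := u - u_0$ is a compactly supported $H^2$ solution of $(\Delta+k^2)w = -q u = -h\chi_D u$ in $\mR^n$; moreover $u = u_0$ on $\mR^n \setminus D$ since $\mR^n \setminus D$ is connected and $w$ vanishes on an open subset of it. Thus, writing $v := w$, one has $\Delta v = -k^2 v - h\chi_D u$, so in $D$ the function $v$ satisfies an elliptic equation with right hand side that, after absorbing the lower order term, is governed by $h\,u_0$ near $x_0$ (using $u = u_0 + v$ and $v = \nabla v = 0$ on $\p D$, which follows from $H^2$ regularity and $v \equiv 0$ outside $D$). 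The key point is that near $x_0$, after subtracting a suitable particular solution associated with the Taylor polynomial of $h\, u_0$ at $x_0$, the pair $(v, \p D)$ becomes a solution of a no-sign obstacle-type problem whose obstacle function is a harmonic polynomial perturbation — this is exactly the setting that the free boundary machinery developed earlier in the paper is designed to handle.

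Next I would localize and flatten: using the $C^1$ diffeomorphism $\Phi$ from Definition \ref{def_edge_point}, transfer the problem to the model edge $S \times \mR^{n-2}$. Because $D\Phi(x_0)$ is a rotation, the principal part of the transformed operator is still the Laplacian at $x_0$, and the transformed equation is an elliptic equation in divergence form with $C^\alpha$ (or at least continuous) coefficients; the non-degeneracy condition \eqref{nondegen} guarantees the leading coefficient $h(x_0) \neq 0$ survives. The transformed solution still vanishes together with its gradient on the transformed boundary. One then performs a blow-up at $x_0$: rescaling $v_\rho(x) = v(\rho x)/\rho^2$ (after subtracting the appropriate second-order polynomial) and passing to the limit $\rho \to 0$, the coefficients freeze, the lower order terms disappear, and the limit $v_0$ is a global solution on the half-space-like domain $S \times \mR^{n-2}$ of $\Delta v_0 = c\,\chi_{S\times\mR^{n-2}}$ with $c = h(x_0)\,u_0(x_0)$ if $u_0(x_0) \neq 0$, or — crucially, in the vanishing case — of $\Delta v_0 = (\text{harmonic polynomial})\cdot \chi_{S \times \mR^{n-2}}$, since the Taylor expansion of $u_0$ at a zero begins with a nonzero harmonic polynomial (solutions of $(\Delta+k^2)u_0 = 0$ have harmonic leading Taylor part). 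This is precisely why the paper's hypothesis that the right hand side be "related to a harmonic polynomial" is the right one.

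Finally I would invoke the free boundary results proved earlier in the paper: a global solution of $\Delta v_0 = P\,\chi_\Lambda$ with $P$ a (nonzero) harmonic polynomial, $v_0$ and $\nabla v_0$ vanishing on $\p\Lambda$, and $\Lambda = S \times \mR^{n-2}$ a genuine wedge with opening angle $\theta_0 \in (0,2\pi)\setminus\{\pi\}$, cannot exist — i.e., the only admissible blow-up geometry is a half-space $\theta_0 = \pi$, contradicting that $x_0$ is an edge point. Concretely, in the two-dimensional cross-section the edge looks like a sector, and one shows via a dimension-reduction / Friedland–Hayman–type eigenvalue argument or an explicit computation with homogeneous solutions that the compatibility of the equation, the $C^{1,1}$ regularity of $v_0$, and the $C^0$ matching of the Cauchy data across the two faces of the wedge forces the opening to be $\pi$. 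The main obstacle I expect is precisely this last step in the presence of a vanishing $u_0$: handling the harmonic polynomial right hand side rather than a constant, and in particular controlling the interaction between the degree of the polynomial $P$ and the geometry of the sector, so that one can still rule out all angles $\theta_0 \neq \pi$; the reduction from the full $n$-dimensional edge to the planar sector (freezing the $\mR^{n-2}$ variables and checking the limit retains the wedge structure and a nonzero harmonic right hand side) is the second delicate point, and the Lipschitz and connectedness hypotheses on $\mR^n \setminus D$ are what make the global blow-up argument and the application of Rellich's lemma legitimate.
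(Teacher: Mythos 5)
Your high-level strategy matches the paper's: reduce via Rellich's lemma and unique continuation to a no-sign obstacle-type problem with right hand side $-h u_0$, observe that the leading Taylor part of $u_0$ at a free boundary point is a nonzero harmonic homogeneous polynomial $H$ (this is exactly what $(\Delta + k^2)u_0 = 0$ gives after blowing up), and then show that the resulting blowup geometry is incompatible with an edge. But the two steps you flag as ``obstacles'' are precisely where the content of the proof lies, and you also have a couple of concrete errors along the way, so this is not yet a proof.

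First, the blowup normalization $v_\rho(x) = v(\rho x)/\rho^2$ (``after subtracting the appropriate second-order polynomial'') is wrong when $u_0$ vanishes at the edge point to order $m \geq 1$: the correct scaling is $v(\rho x)/\rho^{m+2}$ with nothing subtracted, and establishing the corresponding decay $|v(x)| \leq C|x|^{m+2}$ is the optimal regularity Theorem~\ref{thm_c11_regularity}, which exploits the Lipschitz geometry of $D$ via a compactness/unique-continuation argument (and then a Weiss monotonicity for the modified energy $W_H$ to get homogeneity of blowups). Second, the paper does \emph{not} flatten via the $C^1$ diffeomorphism $\Phi$; doing so would replace $\Delta$ by a variable-coefficient divergence-form operator that none of the machinery in Sections \ref{sec_regularity}--\ref{sec_nondegeneracy} is set up to handle. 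Instead one works directly with the Laplacian and uses only that the blowup of $D$ at the edge point is $S\times\mR^{n-2}$ (this is exactly what requiring $D\Phi(x_0)$ to be a rotation in Definition~\ref{def_edge_point} buys), so that the blowup limit $v$ solves $\Delta v = H\chi_C$ with $C = \supp v \subset S\times\mR^{n-2}$; the nondegeneracy Lemma~\ref{lemma_nondegeneracy} then upgrades the inclusion to equality $C = S\times\mR^{n-2}$.

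Third, the dimension reduction is not merely ``freezing the $\mR^{n-2}$ variables.'' The mechanism in the paper's proof of Theorem~\ref{thm_fb_polyhedron} is: fix a point $e$ on the edge axis away from the vertex, where $H$ vanishes to some order $k \leq m$; take a \emph{second} blowup at $e$ with scaling $r^{k+2}$; then differentiate the homogeneity relation $x\cdot\nabla v = (m+2)v$ along the rescaling to deduce $e\cdot\nabla w = 0$ for the second blowup $w$ --- this translation invariance along the edge drops one dimension, and iterating it reaches a planar problem $\Delta w = P\chi_S$ with $P$ a nonzero harmonic homogeneous polynomial and $w, \nabla w$ vanishing on $\p S$. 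Ruling out $\theta_0\neq\pi$ is then the explicit Lemma~\ref{lemma_twodim_sector}: one writes the general homogeneous solution on the sector and computes a $2\times 2$ determinant whose vanishing reduces to $\sin\theta_0 = \pm\frac{1}{m+1}\sin((m+1)\theta_0)$, which has no root in $(0,2\pi)\setminus\{\pi\}$ (this equation was analyzed already in \cite{PaivarintaSaloVesalainen}). Without the translation-invariance mechanism and the explicit sector determinant, your sketch stops exactly at the point where the theorem has to be proved.
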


This result was proved for piecewise $C^2$ edges when $n=3$ in \cite{ElschnerHu2} via boundary value problems in corner domains. Our proof, as explained above, is different and it implements the free boundary approach to nonscattering problems in the presence of incoming waves that may vanish on $\p D$. 
In particular, the method for proving Theorem \ref{thm_main_ndim} reduces matters to a two-dimensional problem as in Theorem \ref{thm_main_twodim} by Federer's dimension reduction argument (see \cite{Weiss1999} or \cite[Lemma 10.9]{Velichkov2023}).

\begin{Remark}
Theorem \ref{thm_main_ndim} can also be formulated for piecewise $C^1$ domains, provided one adopts a suitable generalization of Definition \ref{piecewise-C1} to higher dimensions. We refrain from presenting a technical definition here and instead leave it to the reader to choose an appropriate one.
What matters for our purposes is simply that any blow-up of the set $D$ at a boundary point contains an edge point.
\end{Remark}

\subsection{Free boundary problem}

Let us next explain the connection to free boundary problems and the reason for the nonvanishing assumption in the earlier works \cite{CakoniVogelius, SaloShahgholian}. We consider the case of a bounded domain $\Omega \subset \mR^n$ (the scattering case is analogous upon replacing $\Delta$ by $\Delta+k^2$, see \cite{SaloShahgholian}). We start with a solution of 
\[
(\Delta+q)u = 0 \text{ in $\Omega$}, \qquad u|_{\p \Omega} = g, 
\]
and a corresponding solution $u_0$ for the case $q=0$,
\[
\Delta u_0 = 0 \text{ in $\Omega$}, \qquad u_0|_{\p \Omega} = g.
\]

We suppose that $q$ is non-scattering for the particular Dirichlet data $g$. This means that $\p_{\nu} u|_{\p \Omega} = \p_{\nu} u_0|_{\p \Omega}$. We also let $D = \supp(q)$ and make the simplifying assumptions that $\ol{D} \subset \Omega$, $\Omega \setminus \ol{D}$ is connected, and $q = h \chi_D$, where $h \in C^{\alpha}(\ol{\Omega})$ satisfies $h(x) \neq 0$ for all $x \in \p D$ (thus the potential has a sharp jump at the boundary of the obstacle). By using unique continuation in $\Omega \setminus \ol{D}$, we see that $w := u-u_0$ satisfies 
\begin{equation} \label{eq_fbp_first}
(\Delta+q)w = f \chi_D \text{ in $\Omega$}, \qquad w = 0 \text{ outside $D$},
\end{equation}
where $f := -h u_0$. By elliptic regularity $w$ is at least $C^{1,\beta}$, for all $\beta < 1$.

If $q=0$, the equation in \eqref{eq_fbp_first} is precisely the one that arises in obstacle problems in free boundary theory. If $f > 0$ and $w \geq 0$ near the boundary point of interest, this is the classical obstacle problem. If $f > 0$ but $w$ can have varying sign, this is a \emph{no-sign obstacle problem} which is also well understood \cite{PSU}. If $u_0 \neq 0$ at the boundary point of interest, this situation applies, and the theory of the no-sign obstacle problem ensures that $\p D$ is a free boundary that is either regular or has thin complement.

However, in general the solution $u_0$ may vanish at the boundary point of interest, and in such a case both $f$ and $w$ can have varying sign. 
In some specific cases, such problems are  called \emph{unstable obstacle problems} and in these problems the free boundary may have corners (see e.g.\ \cite{And-Sh-W2012}). Other variations with $f$ vanishing can also be found in  \cite{Yeressian}. 
A typical example is the quadrant $D = \{ x \in \mR^2 \,:\, x_1, x_2 > 0 \}$ with $w = (x_1)_+^2 (x_2)_+^2$, which satisfies 
\begin{equation} \label{eq_fbp_second}
\Delta w = 2 |x|^2 \chi_D \text{ in $\mR^2$}, \qquad w = 0 \text{ outside $D$.}
\end{equation}

There is a special feature of the inverse scattering problem which may still prevent the appearance of corners. Namely, in \eqref{eq_fbp_first} the function $f$ has the special form $f = -h u_0$, where $h$ is nonvanishing and $u_0$ is a solution of an elliptic equation. If $0 \in \p D$ is the boundary point of interest, this implies that one has Taylor expansions 
\[
h = h(0) + O(|x|^{\alpha}), \qquad u_0 = H(x) + O(|x|^{m+1}),
\]
where $h(0) \neq 0$ and $H(x)$ is a \emph{harmonic} homogeneous polynomial of degree $m$. Thus after taking a suitable blowup at $0$, the problem \eqref{eq_fbp_first} will take the form 
\[
\Delta u = H \chi_D \text{ in $B_2$}, \qquad u = 0 \text{ outside $D$},
\]
where $H$ is a \emph{harmonic polynomial}. Note in contrast that in \eqref{eq_fbp_second}, where the free boundary has a corner, the polynomial $|x|^2$ is not harmonic.

\subsection{Main results in free boundary theory}

Our first main result shows that several of the steps appearing in the study of no-sign obstacle problems go through also in our setting. First we need a definition from geometric measure theory.

\begin{Definition}\label{def:blowup}(Blowups)
 For a function $v$ defined in $B_2$, we refer to the limit (when it exists) of the sequence
$$
v_{r_j,z}(x) := \frac{v(r_jx + z)}{r_j^k },
$$
as $r_j \to 0$, as a blowup limit of $v$ at $z \in B_{1}$, of order $k$. When $z=0$ we set 
$$
v_{r_j}(x):= v_{r_j,0}(x) = \frac{v(r_jx )}{r_j^k }.
$$
\end{Definition}

\begin{Theorem} \label{thm_fbp_steps}
Let $D \subset \mR^n$ be a Lipschitz domain such that $0 \in \p D$, let $f \in C^{\alpha}_{\mathrm{loc}}(B_2)$, and suppose that $u \in H^1_{\mathrm{loc}}(B_2)$ solves 
\[
\Delta u = f \chi_D \text{ in $B_2$}, \qquad u=0 \text{ outside $D$}.
\]
Let $m \geq 0$, and assume that $f = H + R$ where $H \not\equiv 0$ is a homogeneous polynomial of degree $m$ and $|R(x)| \leq C |x|^{m+\alpha}$. Then $u$ has the following properties.
\begin{itemize}
\item 
\emph{(Optimal regularity)} $u \in C^{1,1}_{\mathrm{loc}}(B_2)$ with 
\[
|u(x)| + |x| \,|\nabla u(x)| + |x|^2 \, |\nabla^2 u(x)| \leq C |x|^{m+2}.
\]
\item 
\emph{(Homogeneity of blowups)} If $v$ is any blowup limit of $u_r(x) = u(rx)/r^{m+2}$, then $v$ is homogeneous of degree $m+2$ and solves 
\[
\Delta v = H \chi_{ \{ v \neq 0 \} } \text{ in $B_1$.}
\]
\item 
\emph{(Nondegeneracy)} If $v$ is any blowup limit at $0$ of order $m+2$, then for any $\eps$ with $0 < \eps < 1$ there is $c_{\eps} > 0$ so that 
\[
\sup_{B(x, \eps \abs{x})} \abs{v} \geq c_{\eps} \abs{x}^{m+2} \text{ whenever $x \in \ol{D} \cap \ol{B}_{1/2}$}.
\]
\item 
\emph{(Weak flatness)} If the support of some blowup limit at $0$ is the half-space $x \cdot e \geq 0$, then for any $\delta > 0$ there is $r > 0$ such that $\p D \cap B_r \subset \{ |x \cdot e| \leq \delta r \}$.
\end{itemize}
\end{Theorem}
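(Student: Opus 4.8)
\emph{Overall strategy and optimal regularity.} The plan is to run the blow-up and compactness machinery of obstacle-type free boundary problems, exploiting the structure $f=H+R$ with $H$ homogeneous of degree $m$, which fixes the natural scaling exponent $m+2$ and is ultimately what prevents corners. First I would establish the growth bound $\sup_{B_r}|u|\le C r^{m+2}$ by a dyadic contradiction argument: if $S_j:=\sup_{B_{2^{-j}}}|u|$ violates $\sup_j 2^{j(m+2)}S_j<\infty$, choose scales $j_k\to\infty$ realizing the running maximum of $a_j:=2^{j(m+2)}S_j$ and set $v_k(x):=u(2^{-j_k}x)/S_{j_k}$; then $\sup_{B_1}|v_k|=1$, the choice of $j_k$ yields $\sup_{B_\rho}|v_k|\le C\rho^{m+2}$ for $1\le\rho\le 2^{j_k}$, and $\Delta v_k=a_{j_k}^{-1}(H+o(1))\chi_{D/2^{-j_k}}$ tends to $0$ locally uniformly; so a subsequential limit $v_\infty$ is an entire harmonic function of polynomial growth, hence a polynomial, with $\sup_{B_1}|v_\infty|=1$; but $v_k$ vanishes on $2^{j_k}D^c$, which contains a fixed open cone for all large $k$ because $D$ is Lipschitz, forcing $v_\infty\equiv 0$ there and hence everywhere — a contradiction. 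Interior gradient estimates on dyadic annuli then give $|\nabla u(x)|\le C|x|^{m+1}$. The hard part is the $C^{1,1}$ (Hessian) bound: here I would use the obstacle structure ($u$ and $\nabla u$ vanish on $\partial D$, since $u\in C^{1,\beta}$ and $u\equiv 0$ on the open set $\mathrm{int}(D^c)$), subtract the homogeneous degree-$(m+2)$ polynomial $P$ with $\Delta P=H$ so that $u-P$ is harmonic in $D$ up to an $O(|x|^{m+\alpha})$ term, and run a second, scaled compactness argument for the rescalings $u_r(x)=u(rx)/r^{m+2}$ — which solve obstacle problems with $\partial(D/r)$ uniformly Lipschitz, right-hand side uniformly bounded in $C^\alpha_{\mathrm{loc}}$, and uniform growth control — to conclude a uniform $C^{1,1}(B_{1/2})$ bound on the $u_r$, which is equivalent to the stated pointwise estimate. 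I expect this last step, with the sign of $H$ to be tracked throughout, to be the main obstacle in this item.

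\emph{Homogeneity of blow-ups.} The engine is a Weiss-type monotonicity formula. Since $u\equiv 0$ outside $D$ we have $f\chi_D\,u=fu$ a.e., and for
\[
W(u,r)=\frac{1}{r^{n+2m+2}}\int_{B_r}\bigl(|\nabla u|^2+2Hu\bigr)\,dx-\frac{m+2}{r^{n+2m+3}}\int_{\partial B_r}u^2\,d\mathcal{H}^{n-1}
\]
the equation and the homogeneity of $H$ should give $\tfrac{d}{dr}W(u,r)=c_n\,r^{-(n+2m+2)}\int_{\partial B_r}\bigl(\partial_\nu u-\tfrac{m+2}{r}u\bigr)^2\,d\mathcal{H}^{n-1}+E(r)$, where the error $E(r)$ comes from the term $Ru$ and is integrable near $0$ because $|R|\lesssim|x|^{m+\alpha}$; hence $W(u,0^+)$ exists and is finite, using the $C^{1,1}$ bound for a lower bound. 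For a blow-up $v=\lim u_{r_j}$, scale invariance of $W$ gives $W(v,\rho)=\lim_j W(u,r_j\rho)=W(u,0^+)$ for all $\rho$, so $W(v,\cdot)$ is constant. Since $v$ solves the clean equation $\Delta v=H\chi_{\{v\neq 0\}}$ (proved in the standard way: extract a weak-$*$ limit $\chi_{D/r_j}\rightharpoonup g$ in $L^\infty$, then $g=1$ on $\{v\neq 0\}$ by uniform convergence and $g=0$ a.e.\ on $\{v=0\}$ because $\nabla^2 v=0$ a.e.\ there and $H\neq 0$ a.e.), its own Weiss functional is exactly monotone and constant, forcing $\partial_\nu v=\tfrac{m+2}{r}v$ on a.e.\ sphere, i.e.\ $v$ is homogeneous of degree $m+2$. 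The real work is deriving this monotonicity formula with a polynomial right-hand side and controlling the $R$-error.

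\emph{Nondegeneracy.} By homogeneity, $\sup_{B(x,\eps|x|)}|v|=|x|^{m+2}\sup_{B_\eps(x/|x|)}|v|$, so the assertion reduces to a uniform positive lower bound for the continuous function $\hat x\mapsto\sup_{B_\eps(\hat x)}|v|$ on the relevant compact subset of the unit sphere. This is immediate where $v\neq 0$, and at free boundary points I would run the classical obstacle-problem barrier argument: on a connected component of $\{v\neq 0\}$ the function $v-P$ is harmonic, and comparing it in a ball touching the free boundary — working with $\pm v$ according to the sign of $H$ and using $H\not\equiv 0$, so that $|H|\gtrsim|x|^m$ on a cone of directions — forces $\sup_{B_\eps}|v|$ below, after which compactness produces $c_\eps$. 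Running the same comparison for $u$ itself gives nondegeneracy of $u$ at boundary points, which is what makes $\overline{\{v\neq 0\}}$ agree with the blow-up of $\overline D$. The delicate case is when $H$ vanishes to high order at the point considered.

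\emph{Weak flatness.} Suppose the support of some blow-up $v_0$ is $\{x\cdot e\ge 0\}$; then $\Delta v_0=H\chi_{\{x\cdot e>0\}}$ with $v_0=\partial_e v_0=0$ on $\{x\cdot e=0\}$, and $W(u,0^+)=W(v_0)$. If weak flatness failed there would be points $y_j\in\partial D$, $y_j\to 0$, with $|y_j\cdot e|>\delta|y_j|$; rescaling by $\rho_j=|y_j|$ and passing to a subsequential blow-up $w$, nondegeneracy of $u$ at the $y_j$ puts the limiting direction $\hat y_*$ (with $|\hat y_*\cdot e|\ge\delta$) in $\overline{\{w\neq 0\}}$, so $w$ is not a half-space solution in the direction $\pm e$; yet $W(w)=W(u,0^+)=W(v_0)$. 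To close the gap I would prove an epiperimetric inequality at half-space points, showing that the value $W(v_0)$ is attained, among homogeneous degree-$(m+2)$ solutions of $\Delta w=H\chi_{\{w\neq 0\}}$, only by half-space solutions in the direction $\pm e$, and giving in addition the decay $W(u,r)-W(u,0^+)\lesssim r^\gamma$ and hence uniqueness of the blow-up; it then follows that every blow-up equals $v_0$, and a standard compactness argument upgrades this to $\partial D\cap B_r\subset\{|x\cdot e|\le\delta r\}$ for all small $r$. Proving this epiperimetric inequality with a sign-changing polynomial right-hand side, and extracting uniqueness of blow-ups from it, is the main obstacle in this last item, and in a sense in the theorem as a whole.
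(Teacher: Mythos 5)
Your first three items are broadly aligned with the paper's argument. The growth bound $\sup_{B_r}|u|\le Cr^{m+2}$ is proved there by the same contradiction--compactness--unique-continuation device; the Hessian estimate, however, is obtained somewhat differently, via the intermediate bound $|u(x)|\le C|x|^m d(x,\partial D)^2$ followed by a Schauder estimate on a ball of radius comparable to $d(x,\partial D)$, rather than your sketch of extracting a uniform $C^{1,1}$ bound on the rescalings $u_r$ (which would in effect re-derive the no-sign obstacle regularity at every scale). The homogeneity of blowups via a Weiss functional with the $2Hu$ term and an $R$-error integrable near $0$ is exactly what is done. For nondegeneracy the paper runs a contradiction argument that uses the interior cone property of the Lipschitz domain to locate, after rescaling, a ball \emph{inside} $D$ on which $\Delta u_{r_j}\to P$, a nonvanishing polynomial, contradicting $u_{r_j}\to 0$ there; your barrier sketch can be made to work (the paper records this as an alternative in a remark) but it hinges on the very same point --- using the interior cone to find a ball in $D$ where $H$ has a definite sign --- which you should make explicit, since $H$ changes sign in general and the Caffarelli comparison needs a signed right-hand side.

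The serious gap is in weak flatness. You propose to prove an epiperimetric inequality at half-space blowups, obtain a decay rate for the Weiss energy and thus uniqueness of the blowup, and then deduce the slab inclusion. Two problems. First, uniqueness of blowups is explicitly listed as an \emph{open problem} in the paper, even for $n=2$ and $H(x)=x_1$, and an epiperimetric inequality for a sign-changing right-hand side of arbitrary vanishing order is well beyond what is currently known; routing the proof through these is not viable. Second, and more importantly, it is unnecessary: the statement only asks that for each $\delta>0$ \emph{some} $r>0$ exists with $\partial D\cap B_r\subset\{|x\cdot e|\le\delta r\}$, not that the inclusion holds for all small $r$. The paper exploits this by working along the \emph{same} sequence $r_j\to 0$ that realizes the half-space blowup $u_0$: if bad points $x^{(j)}\in\partial D\cap B_{r_j}$ with $|x^{(j)}\cdot e|>\delta r_j$ existed for all $j$, then on the side $x^{(j)}\cdot e>\delta r_j$ the Lipschitz exterior cone yields balls in $D^c$ whose rescalings accumulate in $\{x\cdot e>0\}$, forcing $u_0$ to vanish on a ball in the interior of its support, a contradiction; on the side $x^{(j)}\cdot e<-\delta r_j$, the nondegeneracy of $u$ at $x^{(j)}$ passes to the limit and forces $\sup|u_0|\gtrsim\delta^{m+2}$ near a point with $x\cdot e\le -\delta$, contradicting $\mathrm{supp}(u_0)=\{x\cdot e\ge 0\}$. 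The key observation you missed is that one never needs to control \emph{all} blowups --- only to track the free boundary at the particular scales $r_j$ that produced the half-space limit, which a direct compactness argument handles without any epiperimetric machinery.
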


The above results are valid in any dimension and for any homogeneous polynomial $H$, not necessarily harmonic. We have seen in \eqref{eq_fbp_second} that for a general polynomial $H$ the free boundary can have corners. We would like to show that this cannot happen when $H$ is harmonic, which is the case in inverse scattering problems. This would correspond to the result that the support of every blowup limit is a half-space (here we are using the a priori assumption that $D$ is Lipschitz). Currently we can only prove this in two dimensions.

\begin{Theorem} \label{thm_blowup_classification_intro}
In the setting of Theorem \ref{thm_fbp_steps}, if we additionally assume that $H$ is harmonic and $n=2$, then the support of any blowup limit $v$ is a half space. The blowup limits have explicit form depending on $H$ (see Lemma \ref{lemma_half_space_twodim}).
\end{Theorem}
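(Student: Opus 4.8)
The plan is to pass to polar coordinates and reduce the statement to a one-dimensional ODE on the circle. By the homogeneity of blowups in Theorem~\ref{thm_fbp_steps} we may write $v(r,\theta) = r^{m+2}\phi(\theta)$ with $\phi \in C^{1,1}(S^1)$, and since $H$ is a homogeneous harmonic polynomial of degree $m$ in two variables we have $H(r,\theta) = r^m \psi(\theta)$ where $\psi \not\equiv 0$ solves $\psi'' + m^2 \psi = 0$ (a trigonometric polynomial of frequency $m$; a nonzero constant when $m=0$). On the open set $\{v \ne 0\}$ the equation $\Delta v = H\chi_{\{v\ne0\}}$ reads $\Delta v = H$, so $v$ is real-analytic there and $\phi$ solves the forced harmonic oscillator
\[
\phi'' + (m+2)^2 \phi = \psi \qquad \text{on } \{\phi \ne 0\}.
\]
Moreover, wherever $\phi$ vanishes identically on one side of a point, the $C^1$ regularity forces $\phi$ and $\phi'$ to vanish at that point.

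Next I would reduce the geometry to a union of sectors. Nondegeneracy in Theorem~\ref{thm_fbp_steps} (together with $0 \in \p D$) gives $v \not\equiv 0$, hence $\phi \not\equiv 0$. On the other hand, since $D$ is Lipschitz the complement of $D$ contains an open cone near $0$, on which every blowup limit vanishes; thus $\supp v \ne \mR^2$ and $\supp\phi$ is a proper closed subset of $S^1$ whose complement has nonempty interior. Using that $\phi$ is real-analytic on $\{\phi\ne0\}$ (so its zeros there are isolated), that $\phi \in C^1$, and ODE uniqueness, one checks that on the interior of each connected component $[\alpha,\beta]$ of $\supp\phi$ the function $\phi$ agrees with a single analytic solution $\phi_\alpha$ of the oscillator equation, namely the one with $\phi_\alpha(\alpha) = \phi_\alpha'(\alpha) = 0$, and that $\phi_\alpha(\beta) = \phi_\alpha'(\beta) = 0$ as well.

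The crux is then the following ODE computation, where the harmonicity of $H$ enters. By variation of parameters the solution with zero Cauchy data at $\alpha$ is $\phi_\alpha(\alpha + s) = \tfrac1{m+2}\int_0^s \sin((m+2)(s-t))\,\psi(\alpha+t)\,dt$, and a short calculation shows that $\phi_\alpha(\alpha+L) = \phi_\alpha'(\alpha+L) = 0$ is equivalent to $\int_0^L e^{-i(m+2)t}\psi(\alpha+t)\,dt = 0$. Expanding $\psi(\alpha+t)$ as a combination of $e^{\pm imt}$ and using that the frequencies $(m+2)\pm m$ equal $2$ and $2m+2$, this reduces to an identity of the form
\[
\nu\,\sin(L)\,e^{-iL} = -\tfrac{1}{m+1}\,\ol{\nu}\,\sin((m+1)L)\,e^{-i(m+1)L}
\]
for a fixed $\nu \ne 0$ depending on $\psi$ and $\alpha$; taking absolute values and invoking the elementary bound $|\sin((m+1)L)| \le (m+1)|\sin L|$, with equality only when $\sin L = 0$, forces $L \in \pi\mZ$. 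Hence $\beta - \alpha = \pi$: every component of $\supp\phi$ is a closed arc of length $\pi$. It is precisely here that harmonicity matters — for a non-harmonic homogeneous $H$ the particular solution of the oscillator has a different frequency, the above resonance identity fails, and corners appear, as in the example $w = (x_1)_+^2(x_2)_+^2$ in \eqref{eq_fbp_second}.

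Finally, two disjoint open arcs of length $\pi$ would together exhaust $S^1$ up to two points, contradicting the fact that the complement of $\supp\phi$ has nonempty interior; so $\supp\phi$ consists of a single closed arc of length $\pi$, i.e.\ $\supp v$ is a closed half-plane. The variation-of-parameters formula above then provides $\phi$, and hence $v$, explicitly on this half-plane in terms of $H$ and the orientation of the half-plane, which is the content of Lemma~\ref{lemma_half_space_twodim}. I expect the main obstacle to lie in the structural step of the second paragraph — carefully extracting the sector decomposition and the second-order vanishing from the $C^{1,1}$ regularity while ruling out pathological zero sets on $\p D$ — rather than in the ODE computation itself, which is elementary once set up.
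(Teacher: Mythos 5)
Your argument is correct and arrives at the same transcendental obstruction as the paper, but via a noticeably different route. The paper first proves the general trichotomy of Theorem~\ref{thm_blowup_classification} (the support of a homogeneous blowup is $\emptyset$, a half space, or $\mathbb{R}^2$), with the sector case excluded by Lemma~\ref{lemma_twodim_sector}: there one compares the sector solution with the explicit half-space solution of Lemma~\ref{lemma_half_space_twodim} via unique continuation, encodes the two vanishing conditions at $\theta=\theta_0$ as a $2\times 2$ linear system in the coefficients $(a,b)$, and computes a determinant that reduces to $\sin^2((m+1)\theta_0)=(m+1)^2\sin^2\theta_0$; the paper then cites \cite{PaivarintaSaloVesalainen} for the nonexistence of solutions in $(0,\pi)$ and appeals to the symmetry $\sin(\beta+\pi)=-\sin\beta$ for $(\pi,2\pi)$. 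Your polar-ODE reduction, the variation-of-parameters formula, and the condition $\int_0^L e^{-i(m+2)t}\psi(\alpha+t)\,dt=0$ produce exactly the same equation in the form $(m+1)|\sin L|=|\sin((m+1)L)|$, and your resolution via the classical bound $|\sin(kL)|\le k|\sin L|$ (strict for $k\ge 2$ unless $\sin L=0$) is more elementary and self-contained than the paper's citation. Two minor remarks. First, the strict-inequality argument is vacuous when $m=0$: there the two terms of your resonance identity coincide ($\nu=\bar\nu=H\neq 0$ real) and their sum is $2\nu e^{-iL}\sin L$, which forces $\sin L=0$ directly; this case is worth spelling out separately, as you hint. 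Second, the structural step you correctly flag as the ``main obstacle'' --- upgrading $\Delta v=H\chi_{\{v\neq 0\}}$ a.e.\ to the genuine ODE $\phi''+(m+2)^2\phi=\psi$ on the entire open component of $\mathrm{supp}\,\phi$ --- requires knowing that the zero set of $\phi$ has measure zero inside that component (real analyticity of $v$ on $\{v\neq 0\}$ combined with a strong unique continuation or Caffarelli--Friedman type argument closes this). The paper's Lemma~\ref{lemma_twodim_sector} simply hypothesizes $\Delta u=H$ on the whole sector, so your proposal is no less rigorous than the paper's own write-up on this point; if you wanted to make the step airtight you would insert precisely that measure-zero argument.
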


If $D$ is a general Lipschitz domain, we are at the moment not able to prove that blowup limits are unique (even for $n=2$ and $f(x) = x_1$) or that the free boundary is more regular. However, if we make stronger a priori assumptions on $D$, we can prove regularity of the free boundary.

Here is our main two-dimensional result, which leads to Theorem \ref{thm_main_twodim}.

\begin{Theorem}[Regularity of free boundary near piecewise $C^1$ or convex points] \label{thm_fb_2d}
Let $D \subset \mR^2$, and suppose
that $\partial D$ is piecewise $C^1$  with $0 \in \p D$. Suppose further  that $u \in H^1_{\mathrm{loc}}(B_2)$ solves 
\[
\Delta u = f \chi_D \text{ in $B_2$}, \qquad u=0 \text{ outside $D$},
\]
where $f = H + R \in C^{\alpha}_{\mathrm{loc}}(B_2)$ with $H \not\equiv 0$ a harmonic homogeneous polynomial of degree $m$ and $|R(x)| \leq C |x|^{m+\alpha}$. Then $\partial  D$ is $C^1$ near $0$.

Moreover, if in the above setting one assumes that $D$ is convex instead of piecewise $C^1$, and if one has decompositions $f = H + R$ as above for points $z \in \p D$ near $0$ (where $H$, $R$ and $m$ depend on $z$), then $\partial  D$ is $C^1$ near $0$.
\end{Theorem}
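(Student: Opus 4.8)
The plan is to reduce $C^1$ regularity of $\partial D$ near $0$ to the exclusion of corners, to identify the blow-up of the Lipschitz graph $D$ at such a corner as a sector of opening $\neq\pi$, and to contradict this using Theorems~\ref{thm_fbp_steps} and \ref{thm_blowup_classification_intro}. In the piecewise $C^1$ case, after a rotation and translation write $D\cap B_r=\{x_2>\eta(x_1)\}\cap B_r$ with $\eta\in C([-r,r])$, $\eta(0)=0$, and $\eta|_{[-r,0]}$, $\eta|_{[0,r]}$ of class $C^1$ up to the endpoints. Then $\partial D$ is automatically $C^1$ at each $(t,\eta(t))$ with $t\ne0$, and it is $C^1$ near $0$ as soon as $\eta'(0^-)=\eta'(0^+)$, since then $\eta'$ is continuous on $[-r,r]$. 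So assume $\eta'(0^-)\ne\eta'(0^+)$ and seek a contradiction. Writing $\eta(s)=\int_0^s\eta'(\tau)\,d\tau$ on each half-interval and using continuity of $\eta'$ at $0^\pm$, the rescalings $\eta_\rho(t):=\eta(\rho t)/\rho$ converge uniformly on $[-1,1]$ to $g$, where $g(t)=\eta'(0^+)t$ for $t\ge0$ and $g(t)=\eta'(0^-)t$ for $t\le0$; hence the rescaled domains $\rho^{-1}D$ converge to the closed sector $S:=\{x_2\ge g(x_1)\}$, and $\partial(\rho^{-1}D)\to\partial S$, in Hausdorff distance on $B_1$. Since $\eta'(0^-)\ne\eta'(0^+)$, the boundary $\partial S$ is a union of two half-lines through $0$ meeting at an angle $\gamma\in(0,\pi)$.

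Next I would run the blow-up machinery. By the optimal regularity bound in Theorem~\ref{thm_fbp_steps}, the rescalings $u_\rho(x)=u(\rho x)/\rho^{m+2}$ are bounded in $C^{1,1}(\ol{B}_1)$, so along a subsequence $u_{\rho_j}\to v$ in $C^1(\ol{B}_1)$; by the homogeneity and nondegeneracy statements, $v$ is homogeneous of degree $m+2$, solves $\Delta v=H\chi_{\{v\ne0\}}$ in $B_1$, and $v\not\equiv0$. Because $u=0$ off $D$ and $\rho_j^{-1}D\to S$, the limit $v$ vanishes off $S$, so $\supp v\subseteq S$. By Theorem~\ref{thm_blowup_classification_intro} (valid since $H$ is harmonic and $n=2$), $\supp v$ is a half-plane $\{x\cdot e\ge0\}$, which is a genuine half-plane as $v\not\equiv0$. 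The weak flatness statement of Theorem~\ref{thm_fbp_steps} applied to this $v$ gives, for every $\delta>0$, some $r>0$ with $\partial D\cap B_r\subset\{|x\cdot e|\le\delta r\}$, i.e.\ $\partial(r^{-1}D)\cap B_1\subset\{|x\cdot e|\le\delta\}$. Letting $r\to0$ and using $\partial(r^{-1}D)\to\partial S$ forces both half-lines of $\partial S$ into the line $\{x\cdot e=0\}$, contradicting $\gamma>0$. This excludes the corner and proves the first assertion.

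For the convex case, near $0$ one again has $D\cap B_r=\{x_2>\eta(x_1)\}\cap B_r$ but with $\eta$ convex, so $D$ is locally Lipschitz (cf.\ \cite{Grisvard}); since a convex function that is everywhere differentiable on an open interval is automatically $C^1$ there (its derivative is monotone and, being a derivative, has the intermediate value property, hence is continuous), it suffices to rule out a corner of $\eta$ at every $z_1$ with $z:=(z_1,\eta(z_1))$ near $0$. Suppose $\eta'(z_1^-)<\eta'(z_1^+)$. After translating $z$ to the origin, the blow-ups $\rho^{-1}(D-z)$ are increasing as $\rho\downarrow0$ --- if $x\in D-z$ then for $\rho_1<\rho_2$ the point $\tfrac{\rho_1}{\rho_2}x$ is a convex combination of $x$ and $0\in\ol{D-z}$, hence lies in $D-z$ --- with union the tangent cone $T$, and $\rho^{-1}(D-z)\subseteq T$ for all $\rho$; here $T$ is the \emph{convex} sector of opening $\theta_0=\pi-(\arctan\eta'(z_1^+)-\arctan\eta'(z_1^-))<\pi$. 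Applying Theorem~\ref{thm_fbp_steps} to $u(\,\cdot\,+z)$ with the decomposition of $f$ at $z$ (so that the recentred right-hand side is $H_z+\tilde{R}_z$ with $H_z\not\equiv0$ harmonic homogeneous of degree $m_z$ and $|\tilde{R}_z(x)|\le C|x|^{m_z+\alpha}$) produces, exactly as above, a blow-up limit $v\not\equiv0$, homogeneous of degree $m_z+2$, with $\Delta v=H_z\chi_{\{v\ne0\}}$ and $\supp v\subseteq T$. By Theorem~\ref{thm_blowup_classification_intro}, $\supp v$ is a half-plane; but a convex sector of opening $<\pi$ contains no half-plane (it contains no line through $0$), a contradiction. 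Hence $\eta$ has no corner near $0$, so $\partial D$ is $C^1$ near $0$.

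The genuinely hard analytic input is Theorem~\ref{thm_blowup_classification_intro}, the classification of two-dimensional blow-ups with harmonic right-hand side, which I am assuming here; with it in hand the remaining work is geometric. The points requiring care are the identification of the blow-up of the Lipschitz (resp.\ convex) graph $D$ at a corner together with the corresponding Hausdorff (resp.\ monotone) convergence, the passage from $u=0$ off $D$ to $\supp v\subseteq S$ for the blow-up limit, and, in the convex case, running the argument at every boundary point near $0$, which is exactly what promotes pointwise $C^1$ regularity near $0$ to $C^1$ regularity of the curve $\partial D$ near $0$.
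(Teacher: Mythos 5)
Your proof is correct and, in the piecewise $C^1$ case, is essentially the same as the paper's: produce a blow-up $v$, classify $\supp v$ as a half-plane via Theorem~\ref{thm_blowup_classification_intro}, apply weak flatness, and observe this is incompatible with two half-tangent lines meeting at an angle other than $\pi$. Your explicit identification of the domain blow-up as the closed sector $S$ via the rescalings $\eta_\rho$ is a more detailed way of phrasing the paper's observation that $\p D$ near $0$ is the union of two $C^1$ arcs with tangent lines at $0$; both argue by contradiction against a genuine corner.

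In the convex case you take a genuinely different (and shorter) route. The paper first uses weak flatness at each $z$ near $0$ to deduce uniqueness of the supporting line, then invokes Lemma~\ref{lemma_convex_c1} (unique supporting planes imply $C^1$ for convex domains, proved by a compactness/interior-cone argument). You instead exploit convexity directly: the rescalings $\rho^{-1}(D-z)$ increase to the tangent cone $T$, a convex sector of opening $<\pi$ if $\eta$ has a corner at $z$, and since $u$ vanishes off $D$ the blow-up limit $v$ at $z$ has $\supp v\subseteq\ol{T}$; but by Theorem~\ref{thm_blowup_classification_intro} $\supp v$ is a closed half-plane, which contains a full line through $0$ and therefore cannot fit inside a convex sector of opening $<\pi$. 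This yields the contradiction without weak flatness and without Lemma~\ref{lemma_convex_c1}, and you then finish by noting that a convex function differentiable at every point of an open interval is automatically $C^1$ there. What the paper's route buys is a statement (Lemma~\ref{lemma_convex_c1}) valid in all dimensions and packaged independently; what yours buys is a shorter two-dimensional argument that only uses the support classification together with the elementary monotone containment $\rho^{-1}(D-z)\subseteq T$, avoiding the weak-flatness step entirely.

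Two small points you may wish to tighten for a final writeup. First, when you ``let $r\to0$'' in the weak-flatness step of the piecewise $C^1$ case, you should note that the radii $r=r(\delta)$ furnished by Lemma~\ref{lemma_weak_flatness} do shrink to zero as $\delta\to0$ (they are members of the blow-up sequence $r_j$), which is what licenses passing to the Hausdorff limit $\partial(r^{-1}D)\to\partial S$ along those radii. Second, the phrase ``meeting at an angle $\gamma\in(0,\pi)$'' is slightly imprecise --- the opening of $S$ is in $(0,\pi)\cup(\pi,2\pi)$ depending on the sign of $\eta'(0^+)-\eta'(0^-)$ --- but all you need is that the two half-lines are not collinear, which is exactly $\eta'(0^-)\neq\eta'(0^+)$, and the weak-flatness contradiction works in both configurations.
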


Our result for $n \geq 3$ states that the free boundary cannot contain edge points.

\begin{Theorem}[Regularity of free boundary near edge points] \label{thm_fb_polyhedron}
Let $D \subset \mR^n$ be an open set such that $0 \in \p D$ is an edge point, and suppose that $u \in H^1_{\mathrm{loc}}(B_2)$ solves 
\[
\Delta u = f \chi_D \text{ in $B_2$}, \qquad u=0 \text{ outside $D$},
\]
where $f = H + R \in C^{\alpha}_{\mathrm{loc}}(B_2)$ with $H \not\equiv 0$ a harmonic homogeneous polynomial of degree $m$ and $|R(x)| \leq C |x|^{m+\alpha}$. Then $D$ is $C^1$ near $0$.
\end{Theorem}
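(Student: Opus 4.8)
The plan is to reduce Theorem \ref{thm_fb_polyhedron} to the two-dimensional result Theorem \ref{thm_fb_2d} by Federer's dimension reduction. The first step is to set up the blowup analysis at the edge point $0$. By the optimal regularity and homogeneity of blowups parts of Theorem \ref{thm_fbp_steps}, the rescalings $u_r(x) = u(rx)/r^{m+2}$ are bounded in $C^{1,1}_{\mathrm{loc}}$ and hence a subsequence converges in $C^{1}_{\mathrm{loc}}$ to a blowup limit $v_0$ which is homogeneous of degree $m+2$ and solves $\Delta v_0 = H \chi_{\{v_0 \neq 0\}}$ in $\mR^n$. Because $0$ is an edge point, the $C^1$ diffeomorphism $\Phi$ straightens $\ol D$ near $0$ to $S \times \mR^{n-2}$, and since $D\Phi(0)$ is a rotation, the blowup of $D$ at $0$ is exactly the cylinder $S \times \mR^{n-2}$ (with $S$ a planar sector of angle $\theta_0 \in (0,2\pi)\setminus\{\pi\}$). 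Using $u = 0$ outside $D$ and the weak flatness / nondegeneracy machinery, the support of $v_0$ is contained in this cylinder; conversely nondegeneracy prevents the support from being thinner, so $\supp(v_0) = S \times \mR^{n-2}$ up to the $C^1$ change of coordinates.

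The second step is the translation-invariance / dimension reduction argument. The blowup set $S \times \mR^{n-2}$ is invariant under translations in the last $n-2$ coordinates. One then shows that $v_0$ itself can be taken independent of $x'' = (x_3,\dots,x_n)$: either by a direct argument (translating $v_0$ in the $x''$ directions, using homogeneity and uniqueness of the solution to the obstacle-type problem with data $0$ outside a fixed set, to conclude $v_0(x',x'') = v_0(x',0)$ after possibly passing to a second blowup), or by the standard Federer argument that the "bad set" — points of $\p D$ where no blowup has half-space support — has a dimension reduction property forcing it to be nonempty in a strictly lower-dimensional model if it is nonempty at all, cf.\ \cite{Weiss1999} or \cite[Lemma 10.9]{Velichkov2023}. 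In either formulation, matters reduce to the planar problem $\Delta v = H_0 \chi_D$ in $B_2 \subset \mR^2$ with $D$ a sector of angle $\theta_0$ and $H_0$ the restriction of a harmonic homogeneous polynomial (here one must check that the two-dimensional polynomial obtained from $H$ after restricting to the $x'$-plane is still harmonic and homogeneous, which holds since harmonicity and homogeneity are preserved under restriction to a coordinate subspace, and $H_0 \not\equiv 0$ because the edge is two-dimensional — one arranges the coordinates so that the relevant slice is not in the nodal set, or else passes to a further blowup to reach the leading nonvanishing term).

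The third step invokes Theorem \ref{thm_fb_2d}: a planar sector of opening angle $\theta_0$ is piecewise $C^1$ at its vertex but is $C^1$ there if and only if $\theta_0 = \pi$. Since Theorem \ref{thm_fb_2d} asserts the free boundary of such a solution is $C^1$ near the vertex, and $\theta_0 \in (0,2\pi)\setminus\{\pi\}$ is excluded, we obtain a contradiction with the existence of an edge point — unless $D$ is in fact already $C^1$ near $0$. More precisely, the contradiction shows that $0$ cannot be an edge point unless the sector degenerates to a half-plane, which is exactly the statement that $\p D$ is $C^1$ near $0$. Feeding this back through the diffeomorphism $\Phi$ (whose differential at $0$ is a rotation, so it does not destroy the $C^1$ flatness of the boundary) gives the conclusion that $D$ is $C^1$ near $0$.

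The main obstacle I expect is the rigorous passage from the $n$-dimensional blowup to a genuinely two-dimensional problem to which Theorem \ref{thm_fb_2d} applies: one has to verify carefully that the blowup limit $v_0$ (or an iterated blowup of it) is truly independent of the $n-2$ edge directions, that its support is exactly the cylinder over a sector and not something thinner or fatter (this is where nondegeneracy and weak flatness from Theorem \ref{thm_fbp_steps} are essential), and that the induced planar right-hand side is a nonzero harmonic homogeneous polynomial so that the hypotheses of Theorem \ref{thm_fb_2d} hold verbatim. The Lipschitz and connectedness assumptions on $D$ are used to guarantee the blowup set is well-behaved (e.g.\ that $\{v_0 \neq 0\}$ agrees with the interior of the blowup of $D$) and to run the free-boundary steps of Theorem \ref{thm_fbp_steps}; once the reduction is in place, the two-dimensional theorem does the real work.
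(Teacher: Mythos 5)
Your overall strategy — Federer dimension reduction plus the two-dimensional result — is the same as the paper's, but one concrete claim you make to justify the reduction is false, and the actual mechanism in the paper is different in a way that matters.

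You write that ``harmonicity and homogeneity are preserved under restriction to a coordinate subspace,'' and use this to argue that the planar right-hand side $H_0$ obtained after the reduction is a nonzero harmonic homogeneous polynomial. This is not true: for example $H(x_1,x_2,x_3)=x_1^2-x_3^2$ is harmonic in $\mR^3$, but its restriction to $\{x_3=0\}$ is $x_1^2$, which is not harmonic in $\mR^2$. The paper's reduction does not restrict $H$ to a plane at all. Instead, after a first blowup giving a homogeneous solution $v$ with $\supp(v) = S\times\mR^{n-2}$, one picks a point $e$ on the edge away from the origin, lets $k$ be the vanishing order of $H$ at $e$ (so $0\le k\le m$), and performs a \emph{second blowup} $v_r(x)=v(e+rx)/r^{k+2}$. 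The resulting limit $w$ solves $\Delta w = P\chi_{\supp(w)}$ where $P$ is the degree-$k$ leading Taylor polynomial of $H$ at $e$; this $P$ is harmonic precisely because $H$ is harmonic, independently of any restriction. The crucial translation-invariance then comes from Euler's identity: $(e+rx)\cdot\nabla v(e+rx)=(m+2)v(e+rx)$ yields $e\cdot\nabla v_r(x)=(m+2)r\,v_r(x)-rx\cdot\nabla v_r(x)\to 0$, so $e\cdot\nabla w=0$ and $w$ is independent of the edge direction. Only \emph{then} does one observe that $P$ must also be independent of the edge direction (since $\supp(w)$ is), and therefore $\Delta_{n-1}P=\Delta_n P=0$, i.e.\ $P$ is harmonic in $n-1$ variables as a function on $\mR^{n-1}$ — a different fact from ``restriction preserves harmonicity.'' Iterating this drops down to $\mR^2$.

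A secondary difference: in the final step the paper does not invoke Theorem \ref{thm_fb_2d} but rather Lemma \ref{lemma_twodim_sector} directly, which states the nonexistence of a nontrivial solution of $\Delta u = H$ on a planar sector of angle $\neq\pi$ vanishing to second order on both sides. Your route through Theorem \ref{thm_fb_2d} would ultimately reduce to the same lemma (it is what drives the classification of 2D blowups), so this is merely a detour rather than an error, but the direct appeal is cleaner. The substantive issue you must fix is the justification that the reduced right-hand side is harmonic, replacing the incorrect restriction argument with the second-blowup / Euler-identity argument above.
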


\subsection{Open problems}

There are several questions in free boundary theory that we are currently not able to answer when the right hand side has varying sign. To have a simple model problem let $D \subset \mR^2$ be a Lipschitz domain with $0 \in \p D$, consider the harmonic polynomial $H(x) = x_1$ of degree $1$, and suppose that $u \in C^{1,1}_{\mathrm{loc}}(B_2)$ solves 
\[
\Delta u = x_1 \chi_D \text{ in $B_2$}, \qquad u|_{B_2 \setminus \ol{D}} = 0.
\]
\begin{itemize}
\item 
Is there a unique blowup limit of $u_r(x) = u(rx)/r^3$?
\item 
Is it true that $\p D$ is $C^1$ near $0$?
\item 
If $\p D$ is known to be $C^1$ near $0$, can one show that $\p D$ is $C^{1,\alpha}$,  $C^{\infty}$ or  real-analytic near $0$?
\end{itemize}

All these questions are relevant for harmonic polynomials of degree $m \geq 2$ as well. They are also relevant in dimensions $n \geq 3$. In two dimensions we  characterized all blowup limits, and it would be of interest to have a characterization also when $n \geq 3$. Theorem \ref{thm_fb_polyhedron} implies that the support of a blowup limit cannot contain an edge point, but it does not exclude vertices.

\subsection*{Organization of article}

Section \ref{sec_introduction} is the introduction. In Section \ref{sec_regularity} we prove the $C^{1,1}$ regularity and decay rates for solutions of $\Delta u = f \chi_D$ with $u = 0$ outside $D$ when $f$ vanishes to given order. Section \ref{sec_blowup_solutions} studies blowup limits and proves their homogeneity by using a modified Weiss energy functional, as well as giving a classification of blowup solutions in two dimensions. In Section \ref{sec_nondegeneracy} we show nondegeneracy and weak flatness properties and regularity of the free boundary for convex domains. The main theorems in the introduction are proved in Section \ref{sec_fb_regularity}.

\section{$C^{1,1}$ regularity and optimal decay rate} \label{sec_regularity}

In this section we study the regularity and vanishing order for solutions of the equation 
\begin{equation} \label{regularity_equation_first}
(\Delta + q)u = f \chi_D \text{ in $B_1$}, \qquad u|_{B_1 \setminus \ol{D}} = 0,
\end{equation}
when $f$ is a suitable function with $\abs{f(x)} \leq C\abs{x}^m$ for some integer $m \geq 0$, and $D$ is a Lipschitz domain with $0 \in \p D$. Let us begin with some initial remarks on functions $u$ that satisfy 
\begin{equation} \label{deltau_vanishing_model}
\abs{\Delta u(x)} \leq C \abs{x}^m \text{ in $B_1$}.
\end{equation}
We are interested in conditions ensuring that $\abs{u(x)} \leq C \abs{x}^{m+2}$. Clearly some normalization is needed, since the condition \eqref{deltau_vanishing_model} does not change if we add a harmonic function to $u$. The following result was proved in \cite{CaffarelliFriedman}:

\begin{Lemma} \label{lemma_cf}
If $n=3$, $\beta > 0$ is a non-integer, and $u \in C^1(\ol{B}_1)$ satisfies 
\[
\abs{\Delta u(x)} \leq C \abs{x}^{\beta},
\]
then $u = P + \Gamma$ where $P$ is a harmonic polynomial of order $[\beta]+2$ and $\Gamma$ satisfies 
\[
\abs{\Gamma(x)} \leq C \abs{x}^{\beta+2}, \qquad \abs{\nabla \Gamma(x)} \leq C \abs{x}^{\beta+1}.
\]
\end{Lemma}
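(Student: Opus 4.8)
The plan is to reduce the lemma to the construction of a particular solution of the Poisson equation with the right decay, and then exploit the fact that harmonic functions are controlled by their Taylor expansions. Write $f := \Delta u$, so $f \in L^{\infty}(B_1)$ with $|f(x)| \le C|x|^{\beta}$, and set $m := [\beta]$, so $m < \beta < m+1$. Suppose we can find $w$ with $\Delta w = f$ in $B_1$ and
\[
|w(x)| \le C|x|^{\beta+2}, \qquad |\nabla w(x)| \le C|x|^{\beta+1}.
\]
Then $u - w$ is harmonic in $B_1$ by Weyl's lemma, hence real analytic, and on a smaller ball we may write $u - w = \sum_{l\ge 0} Q_l$ with $Q_l$ a homogeneous harmonic polynomial of degree $l$. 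Setting $P := \sum_{l=0}^{m+2} Q_l$ and $\Gamma := u - P = w + \sum_{l\ge m+3} Q_l$ then finishes the proof: $P$ is a harmonic polynomial of degree $\le m+2 = [\beta]+2$; the tail $\sum_{l\ge m+3}Q_l$ is harmonic and vanishes to order $m+3$ at $0$, hence is $O(|x|^{m+3})$ with gradient $O(|x|^{m+2})$; and since $\beta$ is non-integer, $m+3 > \beta+2$, so these terms are absorbed into the bounds claimed for $\Gamma$.

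It remains to construct $w$, which I would do by a dyadic decomposition of $f$ near the origin. Fix a smooth dyadic partition of unity $1 = \sum_{j\ge 0}\varphi_j$ on $B_1\setminus\{0\}$ with $\varphi_j$ supported in an annulus $\{c\,2^{-j} \le |x| \le C\,2^{-j}\}$, put $f_j := \varphi_j f$ (so $\|f_j\|_{L^{\infty}} \le C\,2^{-j\beta}$), and let $N_j$ be the Newtonian potential of $f_j$, so $\Delta N_j = f_j$ in $\mathbb{R}^3$ and $N_j$ is harmonic away from $\mathrm{supp}(f_j)$, in particular in a ball $B_{c2^{-j}}$ about $0$. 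Since $n = 3$, the estimates $\int_{B_r}|y|^{-1}\,dy \sim r^2$ and $\int_{B_r}|y|^{-2}\,dy \sim r$ yield $\|N_j\|_{L^{\infty}(\mathbb{R}^3)} \le C\,2^{-j(\beta+2)}$ and $\|\nabla N_j\|_{L^{\infty}(\mathbb{R}^3)} \le C\,2^{-j(\beta+1)}$. By interior (Cauchy) estimates for the harmonic function $N_j$ on $B_{c2^{-j}}$, its homogeneous degree-$l$ Taylor part $P_j^{(l)}$ at $0$ obeys $\sup_{|x|=1}|P_j^{(l)}| \le C_l\,2^{-j(\beta+2)}(c2^{-j})^{-l} = C_l'\,2^{j(l-\beta-2)}$; since $\beta \notin \mathbb{Z}$ and $l \le m+2$ forces $l-\beta-2 \le m-\beta < 0$, each series $\sum_j P_j^{(l)}$ converges to a homogeneous harmonic polynomial, so $P_j := \sum_{l=0}^{m+2}P_j^{(l)}$ and $\sum_j P_j$ are harmonic polynomials of degree $\le m+2$. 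The series $\sum_j N_j$ also converges uniformly, with uniformly convergent gradients, to a $C^1$ function $W$ with $\Delta W = f$ in $B_1$. I then define
\[
w := W - \sum_{j\ge 0} P_j = \sum_{j\ge 0}\bigl(N_j - P_j\bigr),
\]
which still satisfies $\Delta w = f$ in $B_1$. Subtracting the Taylor polynomials $P_j$ is the decisive step: it does not alter $\Delta w$, it forces the decay of $w$ at $0$, and — since the removed quantity $\sum_j P_j$ is itself a harmonic polynomial of degree $\le m+2$ — it is the mechanism producing the polynomial $P$ of the statement.

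Finally, to bound $w$ at a point with $|x|\sim 2^{-k}$ I would split the sum $\sum_j(N_j - P_j)$ at $j\approx k$ (a bounded overlap region being absorbed into either half via the sup bounds). For $j\gtrsim k$, i.e.\ pieces $f_j$ supported closer to $0$ than $x$, one uses $|N_j(x)| \le \|N_j\|_{\infty} \le C2^{-j(\beta+2)}$ and $|P_j^{(l)}(x)| \le C2^{-j(\beta+2)}(2^j|x|)^l \le C2^{-j(\beta+2)}2^{(j-k)(m+2)}$; both series are geometric with ratio governed by $2^{m-\beta} < 1$ and sum to $\le C2^{-k(\beta+2)} \sim |x|^{\beta+2}$. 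For $j\lesssim k$, i.e.\ pieces supported farther out, $x$ lies in the harmonicity ball of $N_j$ and $P_j$ is exactly its truncated Taylor expansion there, so $|N_j(x)-P_j(x)|$ is the Taylor remainder, bounded by $C\|N_j\|_{L^{\infty}(B_{c2^{-j}})}(2^j|x|)^{m+3} \le C2^{j(m+1-\beta)}|x|^{m+3}$; since $0 < m+1-\beta < 1$ this geometric series sums to $\le C2^{k(m+1-\beta)}|x|^{m+3} \sim |x|^{\beta+2}$. The gradient bound $|\nabla w(x)| \le C|x|^{\beta+1}$ follows in the same way from $\|\nabla N_j\|_{\infty} \le C2^{-j(\beta+1)}$ and the Cauchy estimates for $\nabla(N_j - P_j)$. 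Combined with the first paragraph this gives $u = P + \Gamma$ with the stated bounds. I expect the main obstacle to be organizing this two-sided summation so that the geometric series close on the correct side; this is exactly where $\beta\notin\mathbb{Z}$ (so $m<\beta<m+1$ strictly) and $n=3$ (Newtonian potential estimates with no logarithmic loss) are used, and for $n=2$ the argument would incur log losses and the statement must be adjusted.
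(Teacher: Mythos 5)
Your argument is correct, and it is worth noting that the paper itself does not prove Lemma \ref{lemma_cf} --- it only cites Caffarelli--Friedman \cite{CaffarelliFriedman} --- so there is no in-text proof to compare against. Your proof is a self-contained construction via dyadic Newtonian potentials, which is a genuinely different route from the Campanato-style iteration that one finds in the cited source (there one builds the approximating harmonic polynomial $P$ by an inductive improvement-of-flatness on shrinking balls $B_{2^{-k}}$, whereas you produce a particular solution $w$ with the full decay in one stroke and read off $P$ from the Taylor expansion of the harmonic function $u-w$). The two approaches are morally dual: in both, the non-integrality $m<\beta<m+1$ is exactly what makes the two geometric series --- the one over pieces supported closer to $0$ than $x$, with ratio $2^{-(\beta-m)}$, and the one over pieces supported farther out, with ratio $2^{m+1-\beta}$ --- close on the correct sides, and the requirement $n\geq 3$ is what gives the Newtonian potential of an $L^\infty$ source on an annulus of radius $\rho$ the clean bounds $O(\rho^2\|f\|_\infty)$ and $O(\rho\|f\|_\infty)$ for itself and its gradient, with no logarithm. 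Your estimates check out: $\|N_j\|_\infty\lesssim 2^{-j(\beta+2)}$, $\|\nabla N_j\|_\infty\lesssim 2^{-j(\beta+1)}$, the interior estimate $\sup_{|x|=1}|P_j^{(l)}|\lesssim 2^{j(l-\beta-2)}$ for the degree-$l$ Taylor part, the Taylor-remainder bound $|N_j-P_j|(x)\lesssim 2^{-j(\beta+2)}(2^j|x|)^{m+3}$ for $|x|\ll 2^{-j}$, and the absorption of the $O(1)$ overlap range $|j-k|\lesssim 1$ by crude sup bounds are all correct, and the same pattern carries over to the gradient. Two small housekeeping items you gloss over but which are routine: the partition of unity only covers a slightly smaller ball (the boundary layer $\{|x|\gtrsim 1/2\}$ contributes a single Newtonian potential that is harmonic near $0$, handled by subtracting its degree-$(m+2)$ Taylor polynomial), and the claimed bounds on $\Gamma$ for $|x|$ bounded away from $0$ follow trivially from $\Gamma\in C^1(\overline{B}_1)$, so only the behavior near $0$ requires the dyadic machinery. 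One small bonus: your argument, unlike the statement, works for any $n\geq 3$, since only the absence of the logarithm in the Newtonian kernel is used.
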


Lemma \ref{lemma_cf} fails when $\beta$ is an integer. In particular, the case $\beta = 0$ is related to the well-known fact that a bounded Laplacian $\Delta u$ does not, in general, imply that $u \in C^{1,1}$.  
A counterexample is given by  
\begin{equation} \label{counterexample_coneone} 
u(x) = h(x) | \log |x||^\alpha,
\end{equation}  
where $h$ is any $(m+2)$-homogeneous harmonic polynomial and $\alpha \leq 1$.  
In this case, one can verify that  
\[
|\Delta u(x)| \leq C |x|^m |\log |x||^{\alpha -1},
\]  
which tends to zero slightly faster than $|x|^m$ when $\alpha < 1$, and is bounded by $C|x|^m$ when $\alpha = 1$.  
Another example illustrating this phenomenon, for $\alpha = 1$, can be found in \cite[Exercise 4.9]{GilbargTrudinger}.

Given these remarks it is clear that in order to have $\abs{u(x)} \leq C \abs{x}^{m+2}$ for solutions of \eqref{regularity_equation_first}, we need to exploit the fact that $u$ vanishes in a sufficiently large set adjacent to $0$ (i.e.\ that $0$ is a free boundary point). When $m = 0$ this is just the standard $C^{1,1}$ regularity result for the no-sign obstacle problem \cite{AnderssonLindgrenShahgholian}. We will prove an analogous result for any $m$ in the case where $D$ is assumed to have Lipschitz boundary. First we show the correct vanishing order for $u$ and $\nabla u$ following an argument in \cite{KarpShahgholian}.

\begin{Lemma} \label{lemma_c11_first}
Let $q \in L^{\infty}_{\mathrm{loc}}(B_2)$, let $m \geq 0$ be an integer, and suppose that $u \in H^1_{\mathrm{loc}}(B_2)$ solves 
\[
(\Delta + q)u = f \chi_D \text{ in $B_2$}, \qquad u|_{B_2 \setminus \ol{D}} = 0,
\]
where $\abs{f(x)} \leq C \abs{x}^m$ a.e.\ in $B_2$ and where $D \subset \mR^n$ is a Lipschitz domain with $0 \in \p D$. Then 
\[
\abs{u(x)} + \abs{x} \, \abs{\nabla u(x)} \leq C \abs{x}^{m+2} \text{ in $B_{3/2}$}.
\]
\end{Lemma}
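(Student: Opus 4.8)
The plan is to use a standard contradiction-and-compactness (blow-up) argument of the type in \cite{KarpShahgholian}, combined with the fact that $u$ vanishes on a nondegenerate portion of space near $0$ because $0$ is a free boundary point of a Lipschitz domain. Suppose the conclusion fails, say for the sup of $|u|$ (the gradient bound follows by interior elliptic estimates once the $L^\infty$ bound is in hand, applied on balls $B_{|x|/2}(x)$ after rescaling). Then there is a sequence of radii $r_j \to 0$ such that, writing $S_j := \sup_{B_{r_j}} |u|$, we have $S_j / r_j^{m+2} \to \infty$, and after a Calder\'on--Zygmund / normalization step we may assume the sup is attained at scale exactly $r_j$, i.e.\ $\sup_{B_{r_j}}|u| = S_j$ while $\sup_{B_{r}}|u| \le (r/r_j)^{m+2} S_j \cdot(\text{something})$ for $r \ge r_j$ — more precisely one runs the usual Karp--Shahgholian iteration producing scales $r_j$ at which $\sup_{B_{r_j}}|u| \ge j\, r_j^{m+2}$ but $\sup_{B_{2r_j}}|u| \le$ (a controlled multiple of) $\sup_{B_{r_j}}|u|$, so that the rescaled functions
\[
v_j(x) := \frac{u(r_j x)}{\sup_{B_{r_j}}|u|}
\]
are uniformly bounded on $B_2$ and satisfy $\sup_{B_1}|v_j| = 1$.

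Next I would examine the equation satisfied by $v_j$. Since $(\Delta+q)u = f\chi_D$ with $|f(x)| \le C|x|^m$ and $q \in L^\infty_{\mathrm{loc}}$, the function $v_j$ solves
\[
\Delta v_j = \frac{r_j^2}{\sup_{B_{r_j}}|u|}\Big( (f\chi_D)(r_j x) - q(r_j x) u(r_j x) \Big)\quad\text{in } B_2,
\]
and the right-hand side is bounded in $L^\infty(B_2)$ by a constant times
\[
\frac{r_j^{m+2}}{\sup_{B_{r_j}}|u|} + r_j^2 \|q\|_{L^\infty},
\]
which tends to $0$ by the contradiction hypothesis $\sup_{B_{r_j}}|u| \ge j\, r_j^{m+2}$. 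Hence $\Delta v_j \to 0$ in $L^\infty(B_2)$, and by $W^{2,p}$ (and hence $C^{1,\beta}$) elliptic estimates plus Arzel\`a--Ascoli, a subsequence of $v_j$ converges locally uniformly in $B_2$ (and in $C^{1,\beta}_{\mathrm{loc}}$) to a limit $v_\infty$ which is harmonic in $B_2$, with $\sup_{B_1}|v_\infty| = 1$, so $v_\infty \not\equiv 0$. At the same time, the Karp--Shahgholian growth control gives a bound $\sup_{B_R}|v_\infty| \le C R^{m+2}$ for all $R \le 1$ (and in fact, after pushing the iteration, for all $R$), which by a Liouville-type argument forces $v_\infty$ to be a harmonic polynomial vanishing to order $\ge m+2$ at the origin — in particular $v_\infty(0) = 0$ and $\nabla v_\infty(0) = 0$, and all derivatives up to order $m+1$ vanish at $0$.

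The key extra ingredient, and the step I expect to be the main obstacle, is to derive a contradiction from the fact that $v_\infty$ must also vanish on a genuinely large set. Because $D$ is a Lipschitz domain with $0 \in \p D$, the complement $B_2 \setminus D$ contains, after passing to the limit under the rescaling $x \mapsto r_j x$ (Lipschitz graphs are scale-invariant up to the Lipschitz constant), a nontrivial open cone $\mathcal{C}$ with vertex at $0$ of fixed opening; since $u = 0$ outside $D$ and the convergence $v_j \to v_\infty$ is locally uniform, we get $v_\infty \equiv 0$ on $\mathcal{C}$. But a harmonic polynomial vanishing on an open cone vanishes identically (e.g.\ by unique continuation, or because its restriction to any sphere is a spherical harmonic vanishing on an open set), contradicting $\sup_{B_1}|v_\infty| = 1$. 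This contradiction establishes $|u(x)| \le C|x|^{m+2}$; rescaling and interior gradient estimates then upgrade this to the stated bound including the $|x|\,|\nabla u(x)|$ term in $B_{3/2}$. The delicate points to get right are: (i) that the Karp--Shahgholian iteration can indeed be set up so that the limit's growth is controlled and hence the Liouville step applies, and (ii) that the Lipschitz hypothesis on $\p D$ survives the blow-up to guarantee a fixed-size cone of vanishing — this is where the Lipschitz assumption (as opposed to, say, merely $0 \in \p(\text{supp } q)$) is essential.
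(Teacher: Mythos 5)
Your core argument is correct and follows the same contradiction-and-rescaling strategy as the paper: choose scales $r_j$ via the Karp--Shahgholian iteration so that $S_{r_j}\ge j\,r_j^{m+2}$ but $S_r\le j\,r^{m+2}$ for $r\ge r_j$, rescale so the right-hand side of the equation tends to zero, pass to a harmonic limit via Calder\'on--Zygmund and Arzel\`a--Ascoli, observe that the limit vanishes on an open cone because $D$ is Lipschitz with $0\in\p D$, and invoke unique continuation. The paper uses the normalization $u_j(x)=u(2r_j x)/(j r_j^{m+2})$ where you normalize by $S_{r_j}$, but these differ only by a bounded factor and the argument is unchanged.

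Two small remarks. First, the claim that $\sup_{B_R}|v_\infty|\le CR^{m+2}$ for all $R\le 1$ is false: with your normalization $\sup_{B_R}|v_j|=S_{Rr_j}/S_{r_j}$, and for $R$ close to $1$ this is close to $1$, not $R^{m+2}$; the iteration only gives the growth bound $\sup_{B_R}|v_j|\le 2^{m+2}R^{m+2}$ for $R\ge 1$. Consequently the ``vanishing to order $\ge m+2$ at the origin'' conclusion does not follow. Fortunately this Liouville/polynomial detour is entirely superfluous: a nonzero harmonic function on a ball cannot vanish on an open cone, by unique continuation, and that is all that is needed (and is exactly what the paper uses). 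Second, your derivation of the gradient bound by applying interior elliptic estimates on $B_{|x|/2}(x)$ once the $L^\infty$ bound is established is a clean shortcut; the paper instead re-runs the contradiction argument with $T_r=\sup_{\overline{B}_r}|\nabla u|$ in place of $S_r$, which is equivalent but longer.
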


\begin{proof}
Since the right hand side $f \chi_D$ is in $L^{\infty}_{\mathrm{loc}}(B_2)$, Calder\'on-Zygmund estimates and Sobolev embedding imply that $u \in W^{2,p}_{\mathrm{loc}} \cap C^{1,\alpha}_{\mathrm{loc}}(B_2)$ for any $p < \infty$ and $\alpha < 1$. Thus it is enough to prove the required estimate in $B_1$.

Let $S_r = \sup_{\ol{B}_r} \,\abs{u}$. We start by proving that $S_r \leq C r^{m+2}$ for $r \in (0,1]$. We argue by contradiction and suppose that for any $j \geq 1$ there is $\rho_j \in (0,1]$ so that $S_{\rho_j} > j \rho_{j}^{m+2}$. Choosing $r_j \in (0,1]$ to be the supremum of all $\rho_j \in (0,1]$ with this property, we have 
\begin{equation} \label{sr_opposite}
S_{r_j} \geq j r_{j}^{m+2}, \qquad S_r \leq j r^{m+2} \text{ for $r \geq r_j$}.
\end{equation}
Note that $(r_j)$ is a nonincreasing sequence with $r_j \to 0$ as $j \to \infty$, since $r \mapsto r^{-m-2} S_r$ is continuous. Define the rescaled functions 
\[
u_j(x) = \frac{u(2 r_{j} x)}{j r_{j}^{m+2}}.
\]
Then 
\[
\sup_{\ol{B}_{1/2}} \,\abs{u_j} = \frac{S_{r_j}}{j r_{j}^{m+2}} \geq 1
\]
and 
\[
\sup_{\ol{B}_{1}} \,\abs{u_j} \leq \frac{S_{2 r_j}}{j r_{j}^{m+2}} \leq \frac{j(2r_j)^{m+2}}{j r_j^{m+2}} \leq 2^{m+2}.
\]
Moreover, since $\abs{f(x)} \leq C \abs{x}^m$, we have for $\abs{x} \leq 1$ 
\[
\abs{\Delta u_j(x)} \leq \frac{4}{j r_{j}^m} \abs{(f\chi_D)(2 r_{j} x) - (q u)(2 r_{j} x)} \leq \frac{C}{j} + C \frac{S_{2 r_j}}{j r_j^m} \leq \frac{C}{j} + C r_j^2.
\]

Now for any $\alpha < 1$, choosing $p < \infty$ large enough we may use Calder\'on-Zygmund estimates and Sobolev embedding to obtain that 
\[
\norm{u_j}_{C^{1,\alpha}(\ol{B}_{1/2})} \leq C \norm{u_j}_{W^{2,p}(\ol{B}_{1/2})} \leq C(\norm{u_j}_{L^p(B_1)} + \norm{\Delta u_j}_{L^p(B_1)}).
\]
Thus $(u_j)$ is uniformly bounded in $C^{1,\alpha}(\ol{B}_{1/2})$, and by compactness there is a subsequence $(u_{j_k})$ converging in $C^1(\ol{B}_{1/2})$ to some $v$. By the above estimates one has 
\[
\sup_{\ol{B}_{1/2}} \,\abs{v} \geq 1, \qquad \Delta v = 0 \text{ in $B_1$}.
\]
Moreover, since $D$ is a Lipschitz domain with $0 \in \p D$ and $u|_{B_2 \setminus \ol{D}} = 0$, it follows that there is an open cone $C$ in $\mR^n$ so that each $u_j$ and hence $v$ vanish in $C \cap B_{1/2}$. By unique continuation one has $v \equiv 0$ in $B_{1/2}$, but this is impossible since $\sup_{\ol{B}_{1/2}} \,\abs{v} \geq 1$. We have reached a contradiction and proved the estimate $\abs{u(x)} \leq C \abs{x}^{m+2}$ in $B_1$.

Next we define $T_r = \sup_{\ol{B}_r} \, \abs{\nabla u}$ and wish to prove the estimate $T_r \leq C r^{m+1}$. However, this follows by repeating the contradiction argument above where $S_r$ is replaced by $T_r$, the estimates for $\sup\,\abs{u_j}$ are replaced by estimates for $\sup\,\abs{\nabla u_j}$, and one uses additionally the simple estimate $\abs{u(x)} \leq (\sup\,\abs{\nabla u}) \abs{x}$.
\end{proof}

In fact we can use the same method to show the following stronger result, which will be used for proving $C^{1,1}$ regularity.

\begin{Lemma} \label{lemma_c11_second}
Let $q \in L^{\infty}_{\mathrm{loc}}(B_2)$, let $m \geq 0$ be an integer, and suppose that $u \in H^1_{\mathrm{loc}}(B_2)$ solves 
\[
(\Delta + q)u = f \chi_D \text{ in $B_2$}, \qquad u|_{B_2 \setminus \ol{D}} = 0,
\]
where $\abs{f(x)} \leq C \abs{x}^m$ and $D \subset \mR^n$ is a Lipschitz domain with $0 \in \p D$. Then in $B_1$ 
\[
\abs{u(x)} \leq C \abs{x}^{m} d(x, \p D)^2, \qquad \abs{\nabla u(x)} \leq C \abs{x}^{m} d(x, \p D).
\]
\end{Lemma}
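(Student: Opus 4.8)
The plan is to re-run the compactness argument of Lemma \ref{lemma_c11_first} at a free boundary point close to $x$ rather than at the origin, after a few elementary reductions. Write $(\star)$ for the bounds $\abs{u(x)} \le C\abs{x}^{m+2}$ and $\abs{\nabla u(x)} \le C\abs{x}^{m+1}$ in $B_{3/2}$ just established, and fix $x \in B_1$. If $x \notin \ol{D}$ there is nothing to prove since $u \equiv 0$ near $x$, and if $x \in \p D$ then both sides vanish (by continuity from the interior case, or because $u$ and $\nabla u$ vanish on the exterior cone of the Lipschitz domain $D$ at $x$). So I assume $x \in D$, put $d := d(x,\p D)$, and use $0 < d \le \abs{x}$ (since $0 \in \p D$). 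Choosing $z \in \p D$ with $\abs{x-z}=d$ gives $\abs{z} \le 2\abs{x}$. If $d \ge \tfrac18$, or if $\abs{z} \le 8d$, then $\abs{x}$ is comparable to $d$ and $(\star)$ already yields $\abs{u(x)} \le C\abs{x}^m\abs{x}^2 \le C'\abs{x}^m d^2$ and likewise for $\nabla u$. It remains to treat $\abs{z} > 8d$, where $z \in \p D \cap \ol{B}_{9/8}$ and $d \le \abs{z}/8$; since $x \in \ol{B}_d(z)$ and $\abs{z}\le 2\abs{x}$, the Lemma reduces to the following estimate, with $C$ independent of $z$ and $r$: for every $z \in \p D \cap \ol{B}_{9/8}$ and every $0 < r \le \abs{z}/8$,
\[
\sup_{\ol{B}_r(z)}\abs{u} \le C\abs{z}^m r^2 \qquad\text{and}\qquad \sup_{\ol{B}_r(z)}\abs{\nabla u} \le C\abs{z}^m r .
\]

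To prove the $\abs{u}$-bound I would argue by contradiction exactly as in Lemma \ref{lemma_c11_first}. If it fails, pick $z_j \in \p D \cap \ol{B}_{9/8}$ and, as there, a maximal critical radius $r_j \le \abs{z_j}/8$ with $\sup_{\ol{B}_{r_j}(z_j)}\abs{u} \ge j\abs{z_j}^m r_j^2$ and $\sup_{\ol{B}_{\rho}(z_j)}\abs{u} \le j\abs{z_j}^m\rho^2$ for $\rho \in [r_j,\abs{z_j}/8]$. Comparing the first bound with $(\star)$, which gives $\sup_{\ol{B}_{r_j}(z_j)}\abs{u} \le C(\tfrac54\abs{z_j})^{m+2}$, forces $r_j/\abs{z_j} \to 0$, hence $r_j \to 0$ and $2r_j \le \abs{z_j}/8$ eventually. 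The rescalings $u_j(y) := u(z_j+2r_jy)/(j\abs{z_j}^m r_j^2)$ then satisfy $\sup_{\ol{B}_{1/2}}\abs{u_j} \ge 1$, $\sup_{\ol{B}_1}\abs{u_j} \le 4$, and — bounding the $f$-term by $\abs{f(x)}\le C\abs{x}^m$ and the $qu$-term by $(\star)$, all the points involved lying in $B_{3/2}$ — $\norm{\Delta u_j}_{L^\infty(B_1)} \to 0$. Calder\'on-Zygmund estimates and Sobolev embedding then give a subsequence converging in $C^1(\ol{B}_{1/2})$ to a harmonic $v$ with $\sup_{\ol{B}_{1/2}}\abs{v} \ge 1$. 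Since $D$ is Lipschitz, the points $z_j \in \ol{B}_{9/8}$ admit exterior cones of one common opening; passing to a further subsequence along which the cone axes converge, $v$ vanishes on a nonempty open cone in $B_{1/2}$, so $v \equiv 0$ by unique continuation — a contradiction.

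The $\abs{\nabla u}$-bound in the displayed estimate I would then deduce from the $\abs{u}$-bound together with $(\star)$ and interior gradient estimates: for $y \in \ol{B}_r(z)$ with $s := \abs{y-z}$ small relative to $\abs{z}$, apply the interior gradient estimate on $B_{s/2}(y) \subset B_{3s/2}(z)$, where $\abs{\Delta u} = \abs{f\chi_D-qu} \le \abs{f}+\norm{q}_{L^\infty}\abs{u} \le C\abs{z}^m$ and $\sup_{B_{s/2}(y)}\abs{u} \le C\abs{z}^m s^2$ by the first bound, to obtain $\abs{\nabla u(y)} \le C\abs{z}^m s \le C\abs{z}^m r$; in the complementary range, where $s$ is comparable to $\abs{z}$ and hence to $r$, $(\star)$ gives the bound directly. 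Combined with the reductions above, this proves the Lemma.

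The one genuinely new point relative to Lemma \ref{lemma_c11_first}, and the step I expect to need the most care, is that the exterior cones at the free boundary points $z_j$ need not share a common axis, so the compactness step must be supplemented by extracting a subsequence along which these axes converge; this relies on $D$ having a uniform Lipschitz character on compact subsets of $B_2$. The remaining work is bookkeeping: isolating the cheap regimes where $\abs{z}$ or $d$ is comparable to $\abs{x}$ (handled by $(\star)$), choosing all radii so that every ball stays inside $B_2$ and inside the region $B_{3/2}$ where $(\star)$ is available, and verifying that the critical radius $r_j$ is small compared with $\abs{z_j}$, which is precisely what makes $\norm{\Delta u_j}_{L^\infty(B_1)} \to 0$.
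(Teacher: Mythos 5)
Your proof is correct and takes essentially the same approach as the paper: a rescaled compactness/contradiction argument at free boundary points near the origin, with a maximal critical radius chosen so that the rescalings are bounded, $\norm{\Delta u_j}_{L^\infty}\to 0$, and the $C^1$ limit is a nonzero harmonic function vanishing on an open cone. The only differences are cosmetic — you index by the boundary point $z_j$ and a single critical radius (the paper introduces an auxiliary outer scale $r_j$ with $z^j\in B_{2r_j}$, which is equivalent once $r=|x|$ and $s=d(x,\p D)$ are substituted), and you obtain the gradient bound from the $u$-bound via interior gradient estimates rather than by repeating the contradiction argument, which is what the paper's ``proved in a similar way'' means; your explicit remark about passing to a subsequence along which the exterior-cone axes converge is a point the paper leaves implicit but that is indeed needed.
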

\begin{proof}
We claim that there is $C > 0$ so that for any $r \in [0,1/4]$, any $z \in B_{2r} \cap \p D$ and any $s \in [0,r]$ one has 
\[
\sup_{\ol{B}(z,s)} \abs{u} \leq C r^m s^2.
\]
If this is not true, then for any $j \geq 1$ there exist $r_j$, $z^j \in B_{2r_j} \cap \p D$ and $s_j \in [0,r_j]$ such that (after replacing $s_j$ by the supremum of such values) 
\[
\sup_{\ol{B}(z^j, s_j)} \abs{u} \geq j r_j^m s_j^2, \qquad \sup_{\ol{B}(z^j, s)} \abs{u} \leq j r_j^m s^2 \text{ for $s \geq s_j$.}
\]
Define the rescaled function 
\[
u_j(x) = \frac{u(z^j + 2 s_j x)}{j r_j^m s_j^2}.
\]
Then we have 
\[
\sup_{\ol{B}_{1/2}} \,\abs{u_j} \geq 1, \qquad \sup_{\ol{B}_{1}} \,\abs{u_j} \leq 2^2,
\]
and for $\abs{x} \leq 1$ 
\[
\abs{\Delta u_j(x)} \leq \frac{4}{j} \frac{\abs{\Delta u(z^j + 2 s_j x)}}{r_j^m} \leq \frac{C}{j} \left[ \frac{\abs{u(z^j + 2 s_j x)}}{r_j^m} + \frac{\abs{f(z^j + 2 s_j x)}}{r_j^m} \right] \leq \frac{C}{j}
\]
since $\abs{z^j + s_j x} \leq 3 r_j$, $\abs{f(y)} \leq C \abs{y}^m$, and $\abs{u(y)} \leq C \abs{y}^{m+2}$ by Lemma \ref{lemma_c11_first}. As in the proof of Lemma \ref{lemma_c11_first}, we may extract a subsequence of $(u_j)$ that converges in $C^1(\ol{B}_{1/2})$ to some $v$ that satisfies $\Delta v = 0$ in $B_{1/2}$ and $\sup_{\ol{B}_{1/2}} \,\abs{v} \geq 1$, with $v$ vanishing in some open cone in $B_{1/2}$. The unique continuation principle gives $v = 0$, which is a contradiction.

Let us now derive the statement from the above claim. Let $x \in B_1$ and let $z$ be a point on $\p D$ with $\abs{x-z} = d(x, \p D)$. Fix $r = \abs{x}$ and $s = d(x, \p D)$. Since $0 \in \p D$ one has $s \leq r$ and  $\abs{z} \leq \abs{z-x} + \abs{x} = s + r \leq 2r$. We can now use the claim to obtain that 
\[
\abs{u(x)} \leq \sup_{\ol{B}(z,s)} \abs{u} \leq C r^m s^2 = C \abs{x}^m d(x, \p D)^2.
\]
The estimate for $\nabla u$ is proved in a similar way.
\end{proof}

Next we proceed to estimates for the second derivatives of $u$. It is well known (and follows using the counterexample \eqref{counterexample_coneone} above) that one needs some regularity for the function $f$ in order to have $u$ in $C^{1,1}$. It is sufficient that $f$ is H\"older or Dini continuous, and in fact the optimal condition is that $f = \Delta w$ for some $f \in C^{1,1}$ \cite{AnderssonLindgrenShahgholian}. We will give a version of this result for $C^{\alpha}$ functions $f$ that vanish to order $m \geq 1$ at the origin, in the sense that 
\begin{equation} \label{calpha_vanishing}
|f(x)| \leq C |x|^m, \qquad |f(x)-f(y)| \leq C \max(|x|,|y|)^{m-\alpha} |x-y|^{\alpha}.
\end{equation}
The following is the main result of this section.

\begin{Theorem} \label{thm_c11_regularity}
Let $q \in C^{\alpha}_{\mathrm{loc}}(B_2)$ for some $\alpha \in (0,1)$, and suppose that $u \in H^1_{\mathrm{loc}}(B_2)$ solves 
\[
(\Delta + q)u = f \chi_D \text{ in $B_2$}, \qquad u|_{B_2 \setminus \ol{D}} = 0,
\]
where $f \in C^{\alpha}_{\mathrm{loc}}(B_2)$ satisfies \eqref{calpha_vanishing} for $x, y \in \ol{B}_1$, 
and $D \subset \mR^n$ is a Lipschitz domain with $0 \in \p D$. Then $u \in C^{1,1}(\ol{B}_1)$ and 
\[
\abs{u(x)} + \abs{x} \, \abs{\nabla u(x)} + \abs{x}^2 \, \abs{\nabla^2 u(x)} \leq C \abs{x}^{m+2}.
\]
\end{Theorem}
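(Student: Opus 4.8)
The plan is to prove the $C^{1,1}$ estimate $\abs{u(x)} + \abs{x}\abs{\nabla u(x)} + \abs{x}^2\abs{\nabla^2 u(x)} \leq C\abs{x}^{m+2}$ by a blowup/contradiction argument at scale-invariant level, combined with the key input from Lemma \ref{lemma_c11_second} (which already controls $u$ and $\nabla u$ in terms of the distance to $\p D$). The first and main part is to establish the Hessian bound $\sup_{\ol{B}_r}\abs{\nabla^2 u} \leq Cr^m$ for $r \in (0,1]$; once this is known, the estimates for $u$ and $\nabla u$ follow from Lemma \ref{lemma_c11_first} (or by integrating the Hessian bound along segments back to the free boundary, using that $u$ and $\nabla u$ vanish on $\p D$ in the appropriate sense and that $0 \in \p D$).

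For the Hessian bound I would argue by contradiction in the style of the earlier lemmas. Suppose no such $C$ exists; then one finds radii $r_j \to 0$, points $z^j \in \ol{B}_{2r_j}$ (one should center the blowup where the second derivative is large, and it is convenient to further localize near $\p D$ as in Lemma \ref{lemma_c11_second}), and scales so that, after defining a rescaled function $u_j$ of the form $u_j(x) = \frac{u(z^j + \rho_j x) - p_j(x)}{M_j}$ where $p_j$ is the degree-$\leq 2$ Taylor polynomial of $u$ at $z^j$ and $M_j \to \infty$ is the normalizing factor $\rho_j^2\sup_{\ol{B}(z^j,\rho_j)}\abs{\nabla^2 u}$, one has a normalization like $\sup_{\ol{B}_{1/2}}\abs{\nabla^2 u_j} \geq 1$ while $\sup_{\ol{B}_1}\abs{\nabla^2 u_j} \leq 4$, $u_j(0) = \nabla u_j(0) = \nabla^2 u_j(0) = 0$, and $\abs{u_j(x)} \leq C\abs{x}^2$ on $\ol{B}_1$ (the latter from a minimal-scale selection argument: $r_j$ is chosen so that the relevant quantity is already under control at all larger scales, exactly as in the proofs above). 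The Laplacian of $u_j$ is computed from $\Delta u = (f - qu)\chi_D$: using $\abs{f(x)-f(y)} \leq C\max(\abs{x},\abs{y})^{m-\alpha}\abs{x-y}^{\alpha}$ together with $q \in C^{\alpha}_{\mathrm{loc}}$ and Lemma \ref{lemma_c11_first}, one gets that $\Delta u_j$ is bounded by $C(\frac{\rho_j^{m-\alpha}}{M_j/\rho_j^2} \cdot \text{something} + \ldots)$ and in particular $\Delta u_j \to 0$ uniformly on $\ol{B}_{1/2} \cap D_j$ and $u_j = 0$ on $\ol{B}_{1/2}\setminus D_j$, where $D_j$ is the rescaled domain; by the Lipschitz a priori assumption each $D_j$ contains a fixed open cone of directions near its boundary portion, so the limit $v$ vanishes in an open cone. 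By Calder\'on--Zygmund and Schauder estimates $(u_j)$ is precompact in $C^{2}_{\mathrm{loc}}(B_{1/2})$; the limit $v$ is harmonic in $B_{1/2}$, satisfies $v(0)=\nabla v(0)=\nabla^2 v(0)=0$, $\abs{v(x)}\leq C\abs{x}^2$, $\sup_{\ol{B}_{1/2}}\abs{\nabla^2 v}\geq 1$, and vanishes on an open set --- which forces $v \equiv 0$ by unique continuation, contradicting the lower bound on $\nabla^2 v$. This handles the contradiction.

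The step I expect to be the main obstacle is the bookkeeping in the rescaling: one must choose the right combination of center $z^j$ (on or near $\p D$, with $\abs{z^j}$ comparable to the scale), scale $\rho_j$, and normalization $M_j$ so that simultaneously (i) the $L^\infty$ bound $\abs{u_j}\leq C\abs{x}^2$ survives, which requires the Lemma \ref{lemma_c11_second} distance estimates $\abs{u(x)}\leq C\abs{x}^m d(x,\p D)^2$ rather than the cruder Lemma \ref{lemma_c11_first}; (ii) the Hessian is normalized to size $1$ on $\ol{B}_{1/2}$; and (iii) the rescaled Laplacian genuinely tends to zero, which is exactly where the Hölder/Dini condition \eqref{calpha_vanishing} on $f$ (and on $q$) is used --- this is the place where the counterexample \eqref{counterexample_coneone} shows mere boundedness would fail, so the argument must extract the extra decay $\rho_j^{\alpha}$ from the Hölder modulus. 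A secondary technicality is justifying that the blowup limit $v$ vanishes on a genuine open set: this uses that $z^j$ stays a bounded distance (in rescaled coordinates) from a Lipschitz boundary whose Lipschitz constant is uniform, so that a fixed exterior cone persists in the limit; some care is needed if $z^j$ is taken on $\p D$ itself versus slightly inside $D$. Once the Hessian bound $\sup_{\ol{B}_r}\abs{\nabla^2 u}\leq Cr^m$ is in hand, concluding $u\in C^{1,1}(\ol{B}_1)$ with the stated weighted estimate is routine: integrate from the free boundary and combine with Lemma \ref{lemma_c11_first}.
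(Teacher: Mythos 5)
Your plan diverges substantially from what the paper does, and the central step as you have written it contains an error. You propose to get the Hessian bound by a contradiction/blowup argument, rescaling $u_j(x) = \bigl(u(z^j+\rho_j x)-p_j(x)\bigr)/M_j$ where $p_j$ is the second-order Taylor polynomial of $u$ at $z^j$. You then assert that $u_j=0$ on $\ol{B}_{1/2}\setminus D_j$, so that the blowup limit $v$ vanishes on an open cone and unique continuation kills it. But this is false: on the exterior of the rescaled domain, $u(z^j+\rho_j x)=0$ while $p_j$ need not vanish, so $u_j = -p_j/M_j$ there, a (rescaled) \emph{quadratic polynomial}, not zero. Even if you place $z^j\in\p D$ so that the constant and linear parts of $p_j$ vanish (since $u\in C^{1,\alpha}$ and $u$, $\nabla u$ vanish on the exterior cone), the pure second-order part $\tfrac12\,\nabla^2 u(z^j)x\cdot x$ remains, and after dividing by $M_j\sim\rho_j^2\sup|\nabla^2 u|$ this term does not disappear in the limit; $v$ then equals a nontrivial quadratic on the exterior cone, it is not harmonic across $\p\,\mathrm{supp}\,v$, and the unique continuation contradiction does not follow as stated. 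Repairing this (e.g.\ by projecting off only the second-order \emph{harmonic} part, or by a BMO-type argument as in the Andersson--Lindgren--Shahgholian proof of $C^{1,1}$ for the no-sign obstacle problem) is possible but requires genuinely more work than your sketch acknowledges, and is exactly the kind of delicacy the paper avoids.

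The paper's own proof is simpler and does \emph{not} use a contradiction argument for the Hessian. Fix $x\in D\cap B_1$, set $d_x=d(x,\p D)$, and consider the Whitney-scale rescaling $v(y)=u(x+d_x y)/d_x^2$ on $B_1$. Since $B(x,d_x)\subset D$, the equation $(\Delta+q)u=f$ holds classically there, so $v$ solves an elliptic equation with $C^\alpha$ right-hand side on all of $B_1$ and interior Schauder estimates apply. The distance estimates of Lemma \ref{lemma_c11_second} give $\abs{v},\abs{\nabla v}\leq C\abs{x}^m$ on $B_1$, and the modulus condition \eqref{calpha_vanishing} gives $\norm{f(x+d_x\cdot)}_{C^\alpha(\ol B_1)}\leq C\abs{x}^m$ (using $d_x\leq\abs{x}$). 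Schauder then yields $\abs{\nabla^2 v(0)}\leq C\abs{x}^m$, i.e.\ $\abs{\nabla^2 u(x)}\leq C\abs{x}^m$ for all $x\in D\cap B_1$, and the bound extends a.e.\ since $u\equiv 0$ off $D$ and $\p D$ is a null set. This reduces the Hessian estimate entirely to Lemma \ref{lemma_c11_second} plus one interior Schauder estimate, with no limiting argument or Taylor-polynomial subtraction at all; the role of the Hölder condition \eqref{calpha_vanishing} (which you correctly identified as essential) enters only through the Schauder norm of the rescaled right-hand side.
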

\begin{proof}
Fix a point $x \in D \cap B_1$, write $r = \abs{x}$ and $d_x = d(x, \p D) \leq \abs{x} \leq r$, and consider the function 
\[
v(y) = \frac{u(x + d_x y)}{d_x^2}.
\]
By Lemma \ref{lemma_c11_second} we have for $\abs{y} \leq 1$ 
\begin{align*}
\abs{v(y)} &\leq \frac{C \abs{x + d_x y}^m (2d_x)^2}{d_x^2} \leq C r^m, \\
\abs{\nabla v(y)} &\leq \frac{C \abs{x + d_x y}^m (2d_x)}{d_x} \leq C r^m.
\end{align*}
Since $(\Delta+q)u = f$ in $D$ where $f \in C^{\alpha}$, we have from Schauder estimates that $u$ is $C^{2,\alpha}$ in $D$ and 
\begin{align*}
\norm{\nabla^2 v}_{L^{\infty}(B_{1/2})} &\leq C (\norm{v}_{L^{\infty}(B_1)} + \norm{\Delta v}_{C^{\alpha}(\ol{B}_1)}) \\
 &\leq C (r^m + \norm{(qu)(x + d_x \,\cdot\,)}_{C^{\alpha}(\ol{B}_1)} + \norm{f(x + d_x \,\cdot\,)}_{C^{\alpha}(\ol{B}_1)}).
\end{align*}
The second term on the right is $\leq C r^m d_x^2$ using the estimates for $\abs{v}$ and $\abs{\nabla v}$. For the last term, the estimate 
\[
\abs{f(x+d_x y) - f(x+d_x z)} \leq C r^{m-\alpha} d_x^{\alpha} \abs{y-z}^{\alpha} \leq C r^m \abs{y-z}^{\alpha}, \qquad y, z \in B_1,
\]
ensures that this term is $\leq C r^m$. Thus it follows that $\abs{\nabla^2 v(y)} \leq C r^m$ for $\abs{y} \leq 1/2$, and choosing $y=0$ gives $\abs{\nabla^2 u(x)} \leq C \abs{x}^m$.

We have proved that $u$ is $C^{1,1}_{\mathrm{loc}}$ in $D \cap B_2$ and satisfies 
\[
\abs{\nabla^2 u(x)} \leq C \abs{x}^{m}, \qquad x \in D \cap B_1.
\]
Since $u \in W^{2,p}_{\mathrm{loc}}(B_2)$ for any $p < \infty$, $u$ vanishes outside $D$ and $\p D$ has zero measure, the required estimate holds a.e.\ in $B_1$.
\end{proof}

We will apply the above result to right hand sides $f$ that are products of a $C^{\alpha}$ function and a smooth function vanishing to high order. Such functions have the following properties.

\begin{Lemma} \label{lemma_product_vanishing}
Let 
\[
f = P + R
\]
where $P$ is a homogeneous polynomial of order $m \geq 1$, and $R \in C^{\beta}(\ol{B}_1)$ satisfies $|R(x)| \leq C |x|^{m+\beta}$ for some $\beta > 0$. Then $f$ satisfies \eqref{calpha_vanishing} for $x, y \in \ol{B}_1$ with some $\alpha > 0$.

If $h \in C^{\alpha}(\ol{B}_1)$ and $v \in C^{m,\alpha}(\ol{B}_1)$ with $|v(x)| \leq C |x|^m$, then $f = hv$ has the above properties.
\end{Lemma}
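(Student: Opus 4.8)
The plan is to prove the first assertion directly, and then to deduce the second from it via a Taylor expansion at the origin.

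For the first assertion, the pointwise bound is immediate: a homogeneous polynomial of degree $m$ satisfies $|P(x)| = |x|^m\,|P(x/|x|)| \le C|x|^m$, and $|R(x)| \le C|x|^{m+\beta} \le C|x|^m$ on $\ol{B}_1$. For the H\"older-type bound in \eqref{calpha_vanishing}, write $\rho := \max(|x|,|y|)$, note that $|x-y| \le 2\rho$, and treat $P$ and $R$ separately. Since $\nabla P$ is homogeneous of degree $m-1$ and every point of the segment $[x,y]$ has modulus at most $\rho$, the mean value inequality gives $|P(x)-P(y)| \le C\rho^{m-1}|x-y|$, and then $|x-y| \le (2\rho)^{1-\alpha}|x-y|^\alpha$ turns this into $|P(x)-P(y)| \le C\rho^{m-\alpha}|x-y|^\alpha$ for any $\alpha \in (0,1]$. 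For $R$ we have the two bounds $|R(x)-R(y)| \le [R]_{C^\beta}|x-y|^\beta$ and $|R(x)-R(y)| \le |R(x)|+|R(y)| \le C\rho^{m+\beta}$, which we interpolate: for any $\theta \in [0,1]$,
\[
|R(x)-R(y)| = |R(x)-R(y)|^{1-\theta}\,|R(x)-R(y)|^{\theta} \le \big(C\rho^{m+\beta}\big)^{1-\theta}\big([R]_{C^\beta}|x-y|^\beta\big)^{\theta}.
\]
Assuming (as we may, after decreasing $\beta$) that $0 < \beta < 1 \le m$, we choose $\theta = \beta/m$ and $\alpha := \beta\theta = \beta^2/m \in (0,1)$; then $(m+\beta)(1-\theta) = m-\alpha$, so the right-hand side is $C\rho^{m-\alpha}|x-y|^\alpha$ and $|R(x)-R(y)| \le C\rho^{m-\alpha}|x-y|^\alpha$. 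As this $\alpha$ also works in the estimate for $P$, the first assertion follows.

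For the second assertion, since $v \in C^{m,\alpha}(\ol{B}_1)$, Taylor's theorem at $0$ — using the H\"older bound on the $m$-th order derivatives of $v$ — gives $v = T + R_v$ with $T$ a polynomial of degree at most $m$ and $|R_v(x)| \le C|x|^{m+\alpha}$; the hypothesis $|v(x)| \le C|x|^m$ forces all coefficients of $T$ of degree $<m$ to vanish, so $T = P_v$ is homogeneous of degree $m$ and $R_v = v - P_v \in C^{m,\alpha}(\ol{B}_1) \subset C^\alpha(\ol{B}_1)$. Splitting $h = h(0) + (h-h(0))$,
\[
f = hv = h(0)\,P_v + \big[\,h(0)\,R_v + (h-h(0))\,v\,\big] =: P + R,
\]
where $P$ is homogeneous of degree $m$, $R \in C^\alpha(\ol{B}_1)$ as a sum of products of $C^\alpha$ functions on a bounded set, and $|R(x)| \le |h(0)|\,|R_v(x)| + [h]_{C^\alpha}|x|^\alpha\,|v(x)| \le C|x|^{m+\alpha}$. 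Thus $f$ has exactly the structure treated in the first assertion (with $\beta = \alpha$), and \eqref{calpha_vanishing} follows.

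The delicate step is the H\"older estimate for $R$ in the first part: neither the $C^\beta$-seminorm bound nor the crude size bound $|R(x)-R(y)| \le C\rho^{m+\beta}$ suffices on its own, and the admissible exponent (of size roughly $\beta^2/m$) only emerges after interpolating the two with the right weight; one also has to be mindful that the inequalities between powers of $\rho$ go the right way, which they do since $\rho \le 1$. The remaining steps are routine.
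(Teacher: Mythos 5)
Your proof is correct and follows essentially the same route as the paper's: the same pointwise and mean-value estimates for $P$, the same pair of bounds for $R$ combined by interpolation, and the same decomposition $v = P_v + R_v$ together with the split $h = h(0) + (h - h(0))$ for the second part. You merely carry out the interpolation explicitly (finding $\theta = \beta/m$, $\alpha = \beta^2/m$) and spell out the verifications of $R \in C^\alpha$ and $|R(x)| \le C|x|^{m+\alpha}$ that the paper leaves implicit.
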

\begin{proof}
If $f = P+R$ as above, then $|f(x)| \leq C |x|^m$. For $x, y \in \ol{B}_1$ one has 
\begin{align*}
|P(x)-P(y)| &\leq (\sup_{z \in [x,y]} |\nabla P(z)|) |x-y| \leq C \max(|x|,|y|)^{m-1} |x-y|, \\
|P(x)-P(y)| &\leq C \max(|x|,|y|)^m,
\end{align*}
and 
\begin{align*}
|R(x)-R(y)| &\leq C|x-y|^{\beta}, \\
|R(x)-R(y)| &\leq C \max(|x|,|y|)^{m+\beta}.
\end{align*}
Interpolating these estimates gives $|f(x)-f(y)| \leq C \max(|x|,|y|)^{m-\alpha} |x-y|^{\alpha}$ for some $\alpha > 0$. Thus $f$ satisfies \eqref{calpha_vanishing}.

Let $f = hv$ with $h$ and $v$ as in the statement. Since $\abs{v(x)} \leq C \abs{x}^m$, one has $\p^{\gamma} v(0) = 0$ for $\abs{\gamma} \leq m-1$ and the Taylor expansion for $v(x)$ at $0$ takes the form 
\[
v(x) = \sum_{\abs{\gamma} = m} \frac{\p^{\gamma} v(0)}{\gamma!} x^{\gamma} + \sum_{\abs{\gamma} = m} \frac{\abs{\gamma} x^{\gamma}}{\gamma!} \int_0^1 (1-t)^{m-1} (\p^{\gamma} v(tx) - \p^{\gamma} v(0)) \,dt.
\]
Since $v \in C^{m,\alpha}$, one has $v = P_1+R_1$ where $P_1$ is a polynomial of order $m$ and $|R_1(x)| \leq C |x|^{m+\alpha}$. Writing $h = h(0) + (h - h(0))$ gives the required decomposition $f = P + R$ with $P = h(0) P_1$.
\end{proof}

\section{Blowup solutions} \label{sec_blowup_solutions}

\subsection{Weiss monotonicity formula}

Suppose that we have a solution $u \in C^{1,1}_{\mathrm{loc}}$ of 
\begin{equation} \label{weiss_eq1}
(\Delta + q) u = f \chi_{\{ u \neq 0 \}} \text{ in $B_2$}
\end{equation}
where, as in Lemma \ref{lemma_product_vanishing},  
\begin{equation} \label{weiss_eq2}
\text{$f = H + R$, $H$ is homogeneous of degree $m$, and $\abs{R(x)} \leq C \abs{x}^{m+\alpha}$ for some $\alpha > 0$.}
\end{equation}
Also assume that $u$ satisfies the estimates 
\begin{equation} \label{weiss_eq3}
\abs{u(x)} + \abs{x} \, \abs{\nabla u(x)} + \abs{x}^2 \, \abs{\nabla^2 u(x)} \leq C \abs{x}^{m+2}.
\end{equation}
Recall the definition of blowup of order $k$ in Definition \ref{def:blowup},  and note that for  $k=m+2$, the function $u_r(x) = u(rx)/r^{m+2}$ also satisfies the estimates \eqref{weiss_eq3} and solves the equation 
\[
\Delta u_r(x) = (H(x) + r^{-m} R(rx) - r^2 q(rx) u_r(x)) \chi_{\{ u_r \neq 0 \}} \text{ in $B_2$}.
\]

We wish to study the possible blowup limits of $u_r$ when $r \to 0$. To this end we introduce a version of the Weiss energy functional (see \cite[\S3.5]{PSU}) adapted to our situation, namely 
\[
W(r, u) = W_H(r,u) = \frac{1}{r^{2m+n+2}}\int_{B_r} (\abs{\nabla u}^2 + 2 H u) \,dx - \frac{m+2}{r^{2m+n+3}} \int_{\p B_r} u^2 \,dS.
\]
This functional satisfies  
\[
W(rs, u) = W(s, u_r), \qquad 0 < r, s \leq 1.
\]
In particular $W(r,u) = W(1,u_r)$. We compute the derivative 
\begin{align}
\frac{1}{2} \p_r W(r,u) &= \frac{1}{2} \p_r W(1,u_r) \notag \\
 &= \int_{B_1} (\nabla u_r \cdot \nabla \p_r u_r + H \p_r u_r) \,dx - (m+2) \int_{\p B_1} u_r \p_r u_r \,dS \notag \\
 &= \int_{B_1} (-\Delta u_r + H) \p_r u_r \,dx + \int_{\p B_1} (\p_{\nu} u_r - (m+2) u_r) \cdot \p_r u_r \,dS \notag \\
 &= - \int_{B_1} r^{-m} R(rx) \p_r u_r \,dx + \int_{B_1} r^2 q(rx) u_r \p_r u_r \,dx + \int_{\p B_1} r (\p_r u_r)^2 \,dS. \label{weiss_energy_derivative}
\end{align}
In the last equality we used the facts that $(-\Delta u_r + H + r^{-m}R(rx)- r^2 q(rx) u_r)(\p_r u_r) = 0$ a.e.\ in $B_1$ and $\p_{\nu} u_r - (m+2) u_r = r \p_r u_r$ on $\p B_1$. The last term in \eqref{weiss_energy_derivative} is nonnegative. In order to obtain a monotone quantity, we introduce the functional 
\[
F(r,u) = 2 \int_0^r \int_{B_1} s^{-m} R(sx) \p_s u_s \,dx \,ds - 2 \int_0^r \int_{B_1} s^2 q(sx) u_s \p_s u_s \,dx \,ds.
\]
Then  
\[
\p_r \left[ W(r,u) + F(r,u) \right] = 2 \int_{\p B_1} r (\p_r u_r)^2 \,dS \geq 0.
\]

In order to study the limit as $r \to 0$, we observe that 
\[
\abs{F(r,u)} \leq C \int_0^r \int_{B_1} (s^{\alpha} \abs{x}^{m+\alpha} + s^2 \abs{u_s(x)}) \abs{\p_s u_s(x)} \,dx \,ds.
\]
Now $\abs{u_s(x)} \leq C \abs{x}^{m+2}$ and 
\[
\abs{\p_s u_s(x)} = \abs{-(m+2) s^{-m-3} u(sx) + s^{-m-2} x \cdot \nabla u(sx)} \leq C s^{-1} |x|^{m+2}.
\]
It follows that 
\[
\abs{F(r,u)} \leq C r^{\alpha}.
\]
On the other hand, one has 
\[
W(r,u) = W(1,u_r) \geq -C \int_{B_1} \abs{x}^{2m+2} \,dx - C \int_{\p B_1} \,dS \geq -C.
\]
Thus the nondecreasing quantity $W(r,u) + F(r,u)$ has a finite limit as $r \to 0$, and this limit equals $W(0+, u)$.

\begin{Definition}
Suppose $u \in C^{1,1}_{\mathrm{loc}}(B_2)$ satisfies \eqref{weiss_eq1}--\eqref{weiss_eq3}. We say that $v \in C^{1,1}(\ol{B}_1)$ is a blowup limit of $u$ is there is a sequence $r_j \to 0$ so that $u_{r_j} \to v$ in $C^1(\ol{B_1})$.
\end{Definition}

\begin{Lemma} \label{lemma_weiss_blowup}
Let $u \in C^{1,1}_{\mathrm{loc}}(B_2)$ satisfy \eqref{weiss_eq1}--\eqref{weiss_eq3}. Any blowup limit $v$ of $u$ is homogeneous of degree $m+2$ and solves the equation 
\[
\Delta v = H \chi_{\{ v \neq 0 \}} \text{ in $B_1$}.
\]
One has $W(s, v) = W(0+, u)$ for $0 < s \leq 1$. Moreover, the homogeneous degree $m+2$ extension of $v$ (still denoted by $v$) solves the equation 
\[
\Delta v = H \chi_{\{ v \neq 0 \}} \text{ in $\mR^n$}.
\]
\end{Lemma}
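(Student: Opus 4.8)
The plan is to follow the standard Weiss-monotonicity scheme for blowups, adapted to the extra terms $R$ and $q$. First I would extract, from the monotonicity of $r \mapsto W(r,u)+F(r,u)$ established just above, the key fact that $\int_{\p B_1}(\p_r u_r)^2\,dS$ is integrable in $r$ near $0$; combined with $W(0+,u)$ being finite this gives, for any two scales $0<\rho<\sigma$,
\[
W(\sigma,u) + F(\sigma,u) - W(\rho,u) - F(\rho,u) = 2\int_{\rho}^{\sigma}\int_{\p B_1} s(\p_s u_s)^2\,dS\,ds.
\]
Given a blowup sequence $r_j\to 0$ with $u_{r_j}\to v$ in $C^1(\ol B_1)$, I would apply this with $\rho = \lambda r_j$, $\sigma = \mu r_j$ for fixed $0<\lambda<\mu$, use $W(\lambda r_j,u)=W(1,u_{\lambda r_j})$ etc., pass to the limit (using $F(r,u)=O(r^\alpha)\to 0$, the $C^1$ convergence of $u_{r_j}$, and a change of variables to turn $u_{\lambda r_j}$ into $(u_{r_j})_\lambda$), and conclude that the right-hand side integral vanishes in the limit. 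Since the integrand is nonnegative, this forces $\p_s v_s \equiv 0$ for a.e.\ $s$, i.e.\ $v$ is homogeneous of degree $m+2$. The identity $W(s,v)=W(0+,u)$ for all $s\in(0,1]$ then follows because $W(s,v)=W(1,v_s)=W(1,v)$ by homogeneity, and $W(1,v)=\lim_j W(1,u_{r_j})=\lim_j (W(r_j,u)+F(r_j,u))=W(0+,u)$.

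Next I would identify the equation solved by $v$. The rescaled functions satisfy
\[
\Delta u_{r_j}(x) = \big(H(x) + r_j^{-m}R(r_jx) - r_j^2 q(r_jx)u_{r_j}(x)\big)\chi_{\{u_{r_j}\neq 0\}}\quad\text{in } B_2 .
\]
The coefficient terms $r_j^{-m}R(r_jx) = O(r_j^\alpha)$ and $r_j^2 q(r_jx)u_{r_j} = O(r_j^2)$ tend to zero uniformly on compact subsets, so in the sense of distributions the right-hand side converges to $H\chi_{\{u_{r_j}\neq 0\}}$. The remaining point is to pass to the limit in the characteristic function: here I would invoke the nondegeneracy statement of Theorem \ref{thm_fbp_steps} (applicable since the hypotheses are exactly those of that theorem), which prevents the support from collapsing, together with the $C^1$ convergence $u_{r_j}\to v$, to conclude that $\chi_{\{u_{r_j}\neq 0\}}\to \chi_{\{v\neq 0\}}$ a.e.\ (away from $\p\{v\neq 0\}$, which has measure zero by nondegeneracy plus $C^{1,1}$ bounds), giving $\Delta v = H\chi_{\{v\neq 0\}}$ in $B_1$.

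Finally, for the global statement I would use homogeneity: extend $v$ to all of $\mR^n$ by $v(x) = |x|^{m+2} v(x/|x|)$. Since $\Delta v = H\chi_{\{v\neq 0\}}$ holds in $B_1$, and both sides of this equation are homogeneous of degree $m$ (the left side because $v$ is homogeneous of degree $m+2$, the right side because $H$ is homogeneous of degree $m$ and $\{v\neq 0\}$ is a cone), the equation propagates by scaling to every ball $B_\rho$, hence to $\mR^n$. I expect the main obstacle to be the convergence $\chi_{\{u_{r_j}\neq 0\}}\to\chi_{\{v\neq 0\}}$: $C^1$ convergence of $u_{r_j}$ alone does not control the zero sets, so one genuinely needs the nondegeneracy bound to rule out both that $v$ vanishes identically on a set where the $u_{r_j}$ were nonzero and that the free boundaries $\p\{u_{r_j}\neq 0\}$ accumulate on a set of positive measure. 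Once that measure-theoretic convergence of the characteristic functions is in hand, everything else is a routine passage to the limit in the weak formulation of the PDE.
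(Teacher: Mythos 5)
Your monotonicity argument for homogeneity, the identity $W(s,v)=W(0+,u)$, and the scaling argument for extending to $\mR^n$ are all sound and match the standard scheme the paper defers to (Yeressian's Lemma 16). The flaw is in the identification of the PDE. First, invoking the nondegeneracy clause of Theorem~\ref{thm_fbp_steps} is circular here, since that theorem is proved by \emph{combining} Lemma~\ref{lemma_weiss_blowup} with Lemma~\ref{lemma_nondegeneracy}; and invoking Lemma~\ref{lemma_nondegeneracy} directly would import the extra hypothesis that $D$ is Lipschitz with $0\in\p D$, which is not part of the assumptions \eqref{weiss_eq1}--\eqref{weiss_eq3} of the present lemma (where the right-hand side carries $\chi_{\{u\neq0\}}$, not $\chi_D$). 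Second, your claim that $\p\{v\neq 0\}$ has measure zero ``by nondegeneracy plus $C^{1,1}$ bounds'' is itself a free boundary regularity statement that would need a separate justification.

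More importantly, the convergence $\chi_{\{u_{r_j}\neq 0\}}\to\chi_{\{v\neq 0\}}$ that you flag as the main obstacle is not needed at all. The standard argument is: on the open set $\{v\neq 0\}$, the $C^0$ convergence forces $u_{r_j}\neq 0$ there for large $j$, so $\Delta u_{r_j}=H+r_j^{-m}R(r_j\cdot)-r_j^2 q(r_j\cdot)u_{r_j}\to H$ uniformly on compacta of $\{v\neq 0\}$; combined with the uniform $C^{1,1}$ bound on $u_{r_j}$ (which lets you pass to a weak-$*$ limit of $\nabla^2 u_{r_j}$) this gives $\Delta v=H$ on $\{v\neq 0\}$. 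On the complement $\{v=0\}$ one simply uses the Sobolev-function fact that a $W^{2,\infty}_{\mathrm{loc}}$ function vanishing on a set has $\nabla v=0$ and then $\nabla^2 v=0$ a.e.\ on that set, so $\Delta v=0$ a.e.\ on $\{v=0\}$. Together these give $\Delta v = H\chi_{\{v\neq 0\}}$ a.e., which for a $C^{1,1}$ function is the same as in the distributional sense, with no mention of $\{u_{r_j}\neq 0\}$ and no nondegeneracy required. If you replace your characteristic-function step with this, the rest of the proposal goes through.
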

\begin{proof}
The proof is standard, see e.g.\ \cite[Lemma 16]{Yeressian}.
\end{proof}

\subsection{Blowups and global solutions}

It follows from Lemma \ref{lemma_weiss_blowup} that a blowup procedure at the point $0 \in \p D$ yields a function $v \in C^{1,1}_{\mathrm{loc}}(\mR^n)$ which is homogeneous of degree $m+2$ and solves the equation 
\[
\Delta v = H \chi_{\{ v \neq 0 \}} \text{ in $\mR^n$}
\]
where $H$ is a homogeneous polynomial of degree $m$. The case where $H$ is harmonic and $\mathrm{supp}(v)$ is a half space is of particular interest, since in this case  we expect $\p D$ to be regular near $0$.

We have the following result that characterizes half space solutions.

\begin{Lemma} \label{lemma_half_space}
The equation 
\[
\Delta v = f \text{ in $\mR^n_+$}, \qquad v|_{\{x_n=0\}} = \p_n v|_{\{x_n=0\}} = 0,
\]
has at most one solution in $H^1_{\mathrm{loc}}(\ol{\mR^n_+})$. If $f$ is a polynomial of order $m$, then there is a unique solution $v$ which is a polynomial of order $m+2$ given explicitly by 
\[
v(x', x_n) = \sum_{k=2}^{m+2} \frac{x_n^k}{k!} \left(  \sum_{j=0}^{[\frac{k-2}{2}]} \p_n^{k-2-2j} (-\Delta_{x'})^j f(x',0) \right).
\]
If $f$ is a homogeneous polynomial of order $m$, then $v$ is a homogeneous polynomial of order $m+2$.
\end{Lemma}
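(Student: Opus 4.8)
The plan is to treat uniqueness and existence separately, and in both cases to exploit the fact that the Cauchy data on $\{x_n = 0\}$ are zero, which makes the problem a (formally overdetermined) ODE in $x_n$ with $x'$ as a parameter. For uniqueness, suppose $v_1, v_2 \in H^1_{\mathrm{loc}}(\ol{\mR^n_+})$ both solve the problem; then $w = v_1 - v_2$ satisfies $\Delta w = 0$ in $\mR^n_+$ with $w = \p_n w = 0$ on $\{x_n = 0\}$. By interior elliptic regularity $w$ is smooth in the open half-space, and by the Cauchy–Kovalevskaya theorem (or rather its uniqueness part, the Holmgren theorem) applied to the analytic operator $\Delta$ with zero Cauchy data on the non-characteristic hyperplane $\{x_n = 0\}$, one gets $w \equiv 0$ in a neighborhood of the boundary, hence $w \equiv 0$ in $\mR^n_+$ by analyticity (harmonic functions are real-analytic) and connectedness. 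Alternatively, since the right-hand side is a polynomial, one can avoid Holmgren: reflect or argue directly that $w$ must be harmonic, then note that the stated formula below produces one solution and any two differ by a harmonic function with vanishing Cauchy data, which the reflection principle forces to vanish.

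For existence when $f$ is a polynomial of degree $m$, I would simply verify that the explicit formula defines a polynomial of degree $m+2$ solving the problem. Write $v(x', x_n) = \sum_{k \geq 2} \frac{x_n^k}{k!} a_k(x')$ and impose $\Delta v = f$: since $\Delta v = \p_n^2 v + \Delta_{x'} v$, matching powers of $x_n$ gives the recursion $a_{k} = \p_n^{?}$... more precisely, collecting the coefficient of $x_n^{k-2}/(k-2)!$ in $\Delta v$ yields $a_k + \Delta_{x'} a_{k-2} = (\text{coefficient of } x_n^{k-2} \text{ in } f, \text{ times } (k-2)!)$, i.e. $a_k(x') = \p_n^{k-2} f(x',0)/\text{(normalization)} - \Delta_{x'} a_{k-2}(x')$. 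Unwinding this recursion with $a_0 = a_1 = 0$ (which encodes the Cauchy conditions) gives $a_k = \sum_{j=0}^{[(k-2)/2]} (-\Delta_{x'})^j \p_n^{k-2-2j} f(x',0)$, which is exactly the stated sum. Since $f$ has degree $m$, the Taylor expansion $f(x',x_n) = \sum_{\ell=0}^m \frac{x_n^\ell}{\ell!}\p_n^\ell f(x',0)$ terminates at $\ell = m$, each $\p_n^{\ell} f(x',0)$ is a polynomial in $x'$ of degree $\leq m-\ell$, and applying $(-\Delta_{x'})^j$ and multiplying by $x_n^k$ keeps the total degree at most $m+2$; thus $v$ is a polynomial of degree $m+2$. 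One then checks directly that this $v$ has zero Cauchy data on $\{x_n=0\}$ (the sum starts at $k=2$) and solves $\Delta v = f$ (by construction of the recursion), and uniqueness from the first part shows it is the only $H^1_{\mathrm{loc}}$ solution.

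Finally, if $f$ is homogeneous of degree $m$, homogeneity of $v$ follows either from the explicit formula — each term $x_n^{k} (-\Delta_{x'})^j \p_n^{k-2-2j} f(x',0)$ is homogeneous of degree $k + 2j + (m - (k-2-2j)) \cdot$... let me instead argue by scaling: the map $f \mapsto v$ is linear and commutes with the dilation $f(x) \mapsto \lambda^{-m} f(\lambda x)$, $v(x) \mapsto \lambda^{-m-2} v(\lambda x)$ (since the equation and boundary conditions are invariant under this scaling up to the stated powers, and the solution is unique); hence if $f$ is fixed by $f \mapsto \lambda^{-m}f(\lambda\,\cdot)$ then $v$ is fixed by $v \mapsto \lambda^{-m-2}v(\lambda\,\cdot)$, i.e. $v$ is homogeneous of degree $m+2$. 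The main point requiring care is the uniqueness statement in the $H^1_{\mathrm{loc}}$ class without extra growth assumptions: one must ensure the Holmgren/analyticity argument really applies to merely $H^1_{\mathrm{loc}}$ solutions, which it does after invoking interior hypoellipticity of $\Delta$ to upgrade regularity in the open half-space and then a boundary regularity or reflection argument to get the zero Cauchy data rigorously; this is the step I would expect to write most carefully.
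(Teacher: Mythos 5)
Your proof is correct and covers all three claims. The uniqueness step is, in substance, the same as the paper's: the paper simply invokes the unique continuation principle for a harmonic function with vanishing Cauchy data on $\{x_n=0\}$, which is exactly what your reflection/Holmgren/analyticity argument establishes (and you are right that a short elliptic-regularity remark is needed to make sense of the trace $\p_n w|_{\{x_n=0\}}$ for an $H^1_{\mathrm{loc}}$ solution — but once $\Delta w = 0$ in the open half-space with zero Dirichlet trace, $w$ is smooth up to the boundary and the Cauchy condition is classical). Where you genuinely diverge is the existence step. The paper first proves existence abstractly: the linear map $L\colon x_n^2\mathcal{P}_m \to \mathcal{P}_m$, $Lv=\Delta v$, is injective by the uniqueness just shown, and since it maps between finite-dimensional spaces of equal dimension it is bijective; the formula is then read off afterwards by computing $\p_n^k v(x',0)$ from the equation and boundary conditions. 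You instead construct the solution directly via the recursion $a_k = \p_n^{k-2}f(\cdot,0) - \Delta_{x'}a_{k-2}$ with $a_0=a_1=0$, unwinding it to the stated closed form and verifying it solves the problem. Both are valid; the paper's dimension-count cleanly decouples existence from computation (and generalizes painlessly), while your route makes the formula itself carry the existence proof. Your scaling argument for homogeneity is likewise a clean alternative to reading homogeneity off the explicit formula. One stray remark in your write-up: the ``$/\text{(normalization)}$'' in the recursion is spurious — with the $x_n^k/k!$ convention on both sides, the coefficient-matching gives $a_k + \Delta_{x'} a_{k-2} = \p_n^{k-2}f(x',0)$ with no extra constant, as your final closed form correctly reflects.
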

\begin{proof}
Uniqueness follows since any $v \in H^1_{\mathrm{loc}}(\ol{\mR^n_+})$ satisfying $\Delta v = 0$ in $\mR^n_+$ with $v|_{\{x_n=0\}} = \p_n v|_{\{x_n=0\}} = 0$ must vanish identically by the unique continuation principle. For existence, let $\mathcal{P}_k$ denote the space of polynomials of order $\leq k$. The operator 
\[
L: x_n^2 \mathcal{P}_m \to \mathcal{P}_m, \ \ Lv = \Delta v
\]
is linear and injective by the uniqueness argument above. Since it maps between spaces having the same finite dimension, $L$ is surjective. This shows existence of a solution. We have 
\[
v(x',x_n) = \sum_{k=0}^{m+2} \frac{\p_n^k v(x',0)}{k!} x_n^k.
\]
We can evaluate $\p_n^k v(x',0)$ by using the equation $\Delta v = f$ in $\{ x_n \geq 0 \}$ and the boundary conditions $v|_{\{x_n=0\}} = \p_n v|_{\{x_n=0\}} = 0$. The formula for $v$ follows.
\end{proof}

We can also compute the Weiss energy corresponding to a half space solution. It is notable that this energy only depends on  $m$, $n$ and the $L^2$ norm of $H$ on $S^{n-1}$. It does not depend on the particular half space where the solution is supported, or on the polynomial $H$ once its $L^2(S^{n-1})$ norm has been fixed.

\begin{Lemma} \label{lemma_halfspace_weiss_energy}
Let $H$ be a harmonic homogeneous polynomial of degree $m$, let $S$ be a half space in $\mR^n$, and suppose that $v \in C^{1,1}_{\mathrm{loc}}(\mR^n)$ is homogeneous of degree $m+2$ and solves 
\[
\Delta v = H \chi_S \text{ in $\mR^n$}, \qquad v|_{\mR^n \setminus S} = 0.
\]
Then 
\[
W_H(1, v) = \frac{1}{2(2m+n+2)} \frac{1}{\lambda_{m+2}-\lambda_m} \int_{S^{n-1}} H^2(\omega) \,d\omega.
\]
\end{Lemma}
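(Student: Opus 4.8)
The plan is to compute $W_H(1,v)$ directly from its definition by exploiting the explicit structure of half-space solutions and the homogeneity of $v$. First I would reduce to a normalized configuration: after a rotation we may assume $S = \mR^n_+ = \{x_n > 0\}$, so that by Lemma \ref{lemma_half_space} the function $v$ is the unique homogeneous polynomial of degree $m+2$ with $\Delta v = H$ in $\mR^n_+$ and $v|_{\{x_n=0\}} = \p_n v|_{\{x_n=0\}} = 0$, extended by zero to $\{x_n < 0\}$. Since $v$ is $(m+2)$-homogeneous and $H$ is $m$-homogeneous, an Euler-type scaling argument shows both terms in $W_H(1,v)$ are computable from integrals over $S^{n-1}$; in fact $\int_{B_1} |\nabla v|^2\,dx = \frac{1}{2m+n+2}\int_{S^{n-1}} |\nabla v|^2\,d\omega$ and similarly for the other terms. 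Integrating by parts, $\int_{B_1}|\nabla v|^2 = \int_{\p B_1} v\,\p_\nu v\,dS - \int_{B_1} v\,\Delta v\,dx = (m+2)\int_{\p B_1} v^2\,dS - \int_{B_1} Hv\,\chi_S\,dx$, using $\p_\nu v = (m+2)v$ on $S^{n-1}$ by homogeneity and $\Delta v = H\chi_S$. Substituting this into the definition of $W_H$, the boundary terms cancel exactly, leaving
\[
W_H(1,v) = \frac{1}{r^{2m+n+2}}\Big|_{r=1}\Big( -\int_{B_1} Hv\,\chi_S\,dx + 2\int_{B_1} Hv\,\chi_S\,dx \Big) \cdot \tfrac{1}{1}= \int_{B_1} H v\,\chi_S\,dx = \int_{S\cap B_1} H v\,dx.
\]

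So the whole computation reduces to evaluating $I := \int_{\mR^n_+ \cap B_1} Hv\,dx$ and showing it equals $\frac{1}{2(2m+n+2)(\lambda_{m+2}-\lambda_m)}\int_{S^{n-1}}H^2\,d\omega$, where $\lambda_j = j(j+n-2)$ is presumably the eigenvalue of $-\Delta_{S^{n-1}}$ on degree-$j$ spherical harmonics (this should be introduced where the constant first appears). The key identity I would use is that $v$ can be written as $v = \Phi H$ for a suitable function, or more precisely that on $\mR^n_+$ one has a relation between $v$ and $H$ coming from the Almgren-type formula: writing $v$ and $H$ in polar coordinates $x = \rho\omega$, $H(\rho\omega) = \rho^m Y(\omega)$ with $Y$ a spherical harmonic, and $v(\rho\omega) = \rho^{m+2} Z(\omega)$, the equation $\Delta v = H$ becomes $(\lambda_{m+2} - \lambda_m)$-type relation... actually the cleanest route: apply the divergence theorem / Rellich-Pohozaev or simply use that $\Delta(\rho^{m+2}Z) = \rho^m(\Delta_{S^{n-1}}Z + \lambda_{m+2}Z)$ wait — one computes $\Delta(\rho^{s}Z(\omega)) = \rho^{s-2}((s(s+n-2))Z + \Delta_{S^{n-1}}Z) = \rho^{s-2}(\lambda_s Z + \Delta_{S^{n-1}}Z)$. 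So $\Delta v = H$ on the half-space gives, restricting spherical harmonics to the hemisphere, $\lambda_{m+2}Z + \Delta_{S^{n-1}}Z = Y$ on the hemisphere with Dirichlet and Neumann data vanishing on the equator. Then $\int_{S\cap B_1}Hv\,dx = \int_0^1 \rho^{m}\rho^{m+2}\rho^{n-1}d\rho \int_{S^{n-1}_+} YZ\,d\omega = \frac{1}{2m+n+2}\int_{S^{n-1}_+} YZ\,d\omega$, and I must show $\int_{S^{n-1}_+} YZ\,d\omega = \frac{1}{2(\lambda_{m+2}-\lambda_m)}\int_{S^{n-1}}Y^2\,d\omega$.

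For that last spherical identity I would test the equation $(\Delta_{S^{n-1}} + \lambda_{m+2})Z = Y$ against $Z$ itself over the hemisphere (the boundary terms vanish since $Z$ and its normal derivative vanish on the equator), and also against the restriction of $Y$, and combine; alternatively, use that $\tilde H(x) := H(x', -x_n)$ restricted to the lower hemisphere satisfies the analogous relation, and that $Y$ on the full sphere splits into even and odd parts under $x_n \mapsto -x_n$ — the even part pairs with $Z$ trivially via the equator conditions while the odd part's contribution is what survives, accounting for the factor $\frac12$. The hard part will be this final spherical-harmonic bookkeeping: pinning down exactly why the constant is $\frac{1}{2(2m+n+2)(\lambda_{m+2}-\lambda_m)}$ with the precise factor of $2$ and why the answer is insensitive to which harmonic $H$ is chosen (only $\|H\|_{L^2(S^{n-1})}$ matters) and to the choice of half-space. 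I expect the cleanest argument is to expand $Z$ in spherical harmonics on the hemisphere, but since the hemisphere eigenfunctions are not simply the full-sphere ones, it may be more robust to instead use Lemma \ref{lemma_half_space}'s explicit polynomial formula for $v$ together with the fact that $W_H$ is rotation-invariant in an appropriate sense, reducing to checking the constant on one convenient example (e.g. $H(x) = \re((x_1+ix_2)^m)$ or $H$ a power of a single coordinate) and then invoking the invariance to conclude in general.
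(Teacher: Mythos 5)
Your reduction of $W_H(1,v)$ to $\int_{B_1} Hv\,dx$ is correct (and is exactly the paper's first step), as is the further reduction via separation of variables to the spherical identity
\[
\int_{S^{n-1}_+} Y Z\,d\omega = \frac{1}{2(\lambda_{m+2}-\lambda_m)}\int_{S^{n-1}} Y^2\,d\omega,
\]
where $Y = H|_{S^{n-1}}$, $Z$ is the angular part of $v$, and $(\Delta_{S^{n-1}} + \lambda_{m+2})Z = Y$ on the hemisphere $S^{n-1}_+$ with $Z = \partial_\nu Z = 0$ on the equator. But you stop one step short of the finish and then hedge among several alternatives. Testing the spherical equation against $Y$ over the hemisphere, which you do mention, already closes the argument on its own. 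Two integrations by parts give
\[
\int_{S^{n-1}_+} (\Delta_{S^{n-1}} Z)\,Y\,d\omega = \int_{S^{n-1}_+} Z\,\Delta_{S^{n-1}} Y\,d\omega,
\]
since both boundary terms on the equator carry a factor of $Z$ or $\partial_\nu Z$ and hence vanish (no condition on $Y$ at the equator is needed). Since $Y$ is a degree-$m$ spherical harmonic, $\Delta_{S^{n-1}}Y = -\lambda_m Y$, so the tested equation reads $\int_{S^{n-1}_+} Y^2 = (\lambda_{m+2}-\lambda_m)\int_{S^{n-1}_+} YZ$. The remaining ingredient is that $Y^2$ is invariant under the \emph{antipodal} map $\omega\mapsto -\omega$ (because $Y(-\omega) = (-1)^m Y(\omega)$ for any degree-$m$ homogeneous polynomial), and this map interchanges the upper and lower hemispheres, so $\int_{S^{n-1}_+}Y^2 = \tfrac12\int_{S^{n-1}}Y^2$. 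That gives the lemma.

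Two comments on what you wrote. First, the reflection you invoke, $x_n\mapsto -x_n$, is the wrong symmetry: a generic harmonic polynomial $H$ has no definite parity under the equatorial reflection, whereas every degree-$m$ homogeneous polynomial has parity $(-1)^m$ under the antipodal map. The paper also uses the antipodal map, not the equatorial reflection. Second, the fallback of computing an explicit example and citing rotation invariance is unnecessary once the IBP argument is in place. The paper's own proof takes a slightly different route: it extends $\varphi = Z$ antipodally (with sign $(-1)^m$) to a function $\tilde\varphi$ on the full sphere, uses that the equatorial conditions $Z = \partial_\nu Z = 0$ make the extension $C^{1,1}$, applies elliptic regularity to the Helmholtz equation on $S^{n-1}$ to conclude $\tilde\varphi$ is analytic, and decomposes $\tilde\varphi = \tfrac{1}{\lambda_{m+2}-\lambda_m}H + Q$ with $Q$ a degree-$(m+2)$ spherical harmonic, so $\int_{S^{n-1}} H\tilde\varphi$ follows from orthogonality of spherical harmonics. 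Your double-IBP approach on the hemisphere is arguably leaner since it avoids checking regularity of the extension across the equator, but it relies on precisely the same antipodal parity observation.
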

\begin{proof}

First note that since $v$ is homogeneous of degree $m+2$, one has $\p_{\nu} v|_{\p B_1} = (m+2)v|_{\p B_1}$ and hence 
\[
W_H(1,v) = \int_{B_1} (\abs{\nabla v}^2 + 2Hv) \,dx - (m+2) \int_{\p B_1} v^2 \,dS = \int_{B_1} ((-\Delta v + 2H) v) \,dx = \int_{B_1} H v \,dx.
\]
Suppose that $e$ is a unit vector so that $S = \{ x \cdot e > 0 \}$, and let $R$ be a rotation matrix with $R^t e = e_n$. Then 
\[
\Delta(v \circ R) = (H \circ R) \chi_{\{ x_n > 0 \}}, \qquad v \circ R|_{\{x_n < 0\}} = 0.
\]
We also have 
\[
W_{H \circ R}(1, v \circ R) = W_H(1,v).
\]
Since the claimed formula for $W_H(1,v)$ is invariant under rotations of $H$, we may thus assume that $S = \{ x_n > 0 \}$.

Since $v$ is homogeneous of degree $m+2$, we can write $v(r\omega) = r^{m+2} \varphi(\omega)$ where $\omega \in S^{n-1}_+ = S^{n-1} \cap \{ x_n > 0 \}$ and $\varphi \in C^{1,1}(S^{n-1})$. Writing $\Delta = \p_r^2 + \frac{n-1}{r} \p_r + \frac{1}{r^2} \Delta_{S}$, where $\Delta_S$ is the Laplacian on $S^{n-1}$, and using the equation gives that 
\[
(\Delta_S + \lambda_{m+2}) \varphi(\omega) = H(\omega) \text{ in $S^{n-1}_+$}
\] 
where $\lambda_k = k(k+n-2)$. The condition $v|_{\{x_n < 0\}} = 0$ implies that $\varphi|_{\p S^{n-1}_+} = \p_{\nu} \varphi|_{\p S^{n-1}_+} = 0$. Denote by $\tilde{\varphi}$ the antipodally even/odd extension of $\varphi$ depending on whether $m$ is even/odd. Then $\tilde{\varphi} \in C^{1,1}(S^{n-1})$ and one has 
\[
(\Delta_S + \lambda_{m+2}) \tilde{\varphi}(\omega) = H(\omega) \text{ in $S^{n-1}$.}
\]
By elliptic regularity, $\tilde{\varphi}$ is in fact real-analytic. Expanding $\tilde{\varphi}(\omega)$ in terms of spherical harmonics and using that $H$ is a spherical harmonic of degree $m$, we obtain that 
\begin{equation} \label{varphitilde_formula}
\tilde{\varphi}(\omega) = \frac{1}{\lambda_{m+2}-\lambda_m} H(\omega) + Q(\omega)
\end{equation}
where $Q$ is a spherical harmonic of degree $m+2$.

We now compute 
\begin{align*}
W_H(1,v) &= \int_{B_1} H v \,dx = \int_0^1 \int_{S^{n-1}_+} r^{2m+2+n-1} H(\omega) \varphi(\omega) \,d\omega \,dr \\
 &= \frac{1}{2m+n+2} \int_{S^{n-1}_+} H(\omega) \varphi(\omega) \,d\omega = \frac{1}{2(2m+n+2)} \int_{S^{n-1}} H(\omega) \tilde{\varphi}(\omega) \,d\omega \\
 &= \frac{1}{2(2m+n+2)} \frac{1}{\lambda_{m+2}-\lambda_m} \int_{S^{n-1}} H^2(\omega) \,d\omega
\end{align*}
where we used \eqref{varphitilde_formula} in the last step.
\end{proof}

The following result gives an alternative characterization of half space solutions when $n=2$ and the right hand side contains a harmonic polynomial.

\begin{Lemma} \label{lemma_half_space_twodim}
Suppose that $v$ is $C^{1,1}$ and homogeneous of order $m+2$ in $\mR^2$ and solves 
\[
\Delta v = H \chi_{\{x_2 > 0\}} \text{ in $\mR^2$}, \qquad v|_{\{x_2 < 0\}} = 0
\]
where $H$ is a harmonic homogeneous polynomial of order $m$. If $m=0$ and $H \equiv a$ one has 
\[
v(x) = \frac{a}{2} (x_2)_+^2,
\]
whereas if $m \geq 1$ and $H = a r^m e^{im\theta} + b r^m e^{-im\theta}$ one has 
\begin{equation} \label{half_space_twodim}
v(x) = a(\abs{z}^2 z^m - \frac{m+1}{m+2} z^{m+2} - \frac{1}{m+2} \bar{z}^{m+2}) + b(\abs{z}^2 \bar{z}^m - \frac{1}{m+2} z^{m+2} - \frac{m+1}{m+2} \bar{z}^{m+2})
\end{equation}
where $z = x_1 + i(x_2)_+$ and $a, b$ are constants.
\end{Lemma}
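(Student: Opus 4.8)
The plan is to reduce the statement to the uniqueness part of Lemma~\ref{lemma_half_space} and then to pin down the solution explicitly. First I would observe that since $v\in C^{1,1}(\mR^2)\subset C^1(\mR^2)$ and $v\equiv 0$ on the open half plane $\{x_2<0\}$, continuity of $v$ and $\nabla v$ forces $v|_{\{x_2=0\}}=\p_2 v|_{\{x_2=0\}}=0$. Hence the restriction $v|_{\{x_2\geq 0\}}$ lies in $H^1_{\mathrm{loc}}(\ol{\mR^2_+})$ and solves $\Delta w=H$ in $\mR^2_+$ with vanishing Cauchy data on $\{x_2=0\}$, where $H$ is a homogeneous polynomial of degree $m$ (harmonicity is not needed yet). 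By Lemma~\ref{lemma_half_space} with $n=2$, such $w$ is unique and is a homogeneous polynomial of degree $m+2$. So it suffices to exhibit that polynomial. Note also that on $\{x_2\leq 0\}$ one has $z=x_1+i(x_2)_+=x_1\in\mR$, so $\abs{z}^2=z^2=\bar z^2$ and each bracketed expression in \eqref{half_space_twodim} collapses to $(1-\tfrac{m+1}{m+2}-\tfrac{1}{m+2})x_1^{m+2}=0$; thus the right-hand side of \eqref{half_space_twodim} equals $0$ there (consistent with $v=0$), and it is $C^{1,1}$ across $\{x_2=0\}$ precisely because its Cauchy data vanish. Consequently the whole lemma reduces to verifying the formula on $\{x_2>0\}$, where $z=x_1+ix_2$ is the usual complex coordinate.

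For $m\geq 1$, write $H=az^m+b\bar z^m$ (this is the meaning of $H=ar^me^{im\theta}+br^me^{-im\theta}$), and by linearity treat $H=z^m$ and $H=\bar z^m$ separately. Using $\Delta=4\p_z\p_{\bar z}$, the functions $z^{m+2}$ and $\bar z^{m+2}$ are harmonic, while $\p_{\bar z}(\abs{z}^2z^m)=\p_{\bar z}(z^{m+1}\bar z)=z^{m+1}$ and hence $\Delta(\abs{z}^2z^m)=4(m+1)z^m$. Plugging the ansatz $v=c(\abs{z}^2z^m+\alpha z^{m+2}+\beta\bar z^{m+2})$ into $\Delta v=z^m$ determines the leading constant $c$, after which the two conditions $v=0$ and $\p_2 v=0$ on $\{x_2=0\}$ (i.e.\ at $z=\bar z$) become two linear relations whose unique solution is $\alpha=-\tfrac{m+1}{m+2}$, $\beta=-\tfrac{1}{m+2}$; this reproduces the $a$-term in \eqref{half_space_twodim}, and the case $H=\bar z^m$ is handled identically with $z$ and $\bar z$ interchanged, giving the $b$-term. (Equivalently, one may simply sum the explicit series for $v$ furnished by Lemma~\ref{lemma_half_space} with $f=H$ and recognize the closed form.) For $m=0$ and $H\equiv a$, the boundary value problem $\Delta w=a$ in $\mR^2_+$ with zero Cauchy data on $\{x_2=0\}$ is solved by $\tfrac a2 x_2^2$, whose extension by $0$ is $\tfrac a2(x_2)_+^2$, and uniqueness closes this case.

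I expect there is no serious obstacle here; the proof is essentially a direct computation. The one point requiring a little care is the first step: one must check that $v|_{\{x_2\geq 0\}}$ is genuinely the object covered by Lemma~\ref{lemma_half_space}, i.e.\ that it is in $H^1_{\mathrm{loc}}(\ol{\mR^2_+})$, that the Cauchy data vanish up to the boundary, and that $\Delta v=H$ holds across $\{x_2=0\}$ — all of which follow from $v\in C^{1,1}(\mR^2)$ together with the triviality of the contribution of $\{x_2\leq 0\}$. The other mildly tedious step is solving the two-by-two linear systems for the correction coefficients $(\alpha,\beta)$; the relative weights $(m+1):1$ between the two harmonic correction terms are forced by the two Cauchy conditions. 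It is worth noting where harmonicity of $H$ enters: in $\mR^2$ a harmonic homogeneous polynomial of degree $m$ is exactly of the form $az^m+b\bar z^m$, and this is what makes the clean closed form \eqref{half_space_twodim} possible, whereas Lemma~\ref{lemma_half_space} on its own only requires $H$ to be polynomial.
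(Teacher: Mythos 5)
Your proof is correct and reaches the formula by a route that is equivalent to, but organized differently from, the paper's. The paper derives the angular profile by writing $v(r\omega)=r^{m+2}\varphi(\theta)$, applying the antipodal-extension argument embedded in the proof of Lemma~\ref{lemma_halfspace_weiss_energy} (equation \eqref{varphitilde_formula}) to conclude that the extended $\tilde\varphi$ is a four-term Fourier sum in $e^{\pm im\theta}$, $e^{\pm i(m+2)\theta}$, and then imposing $\varphi(0)=\varphi(\pi)=\varphi'(0)=\varphi'(\pi)=0$ to fix the two degree-$(m+2)$ coefficients. You instead invoke the uniqueness and polynomial-form statement of Lemma~\ref{lemma_half_space} directly and compute in complex coordinates via $\Delta=4\p_z\p_{\bar z}$. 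These are the same linear algebra in different clothes: since $r^{m+2}e^{im\theta}=|z|^2z^m$ and $r^{m+2}e^{\pm i(m+2)\theta}=z^{m+2},\bar z^{m+2}$, your $2\times 2$ system for $(\alpha,\beta)$ (coming from $1+\alpha+\beta=0$ and $m+(m+2)(\alpha-\beta)=0$) is literally the paper's system for $(c,d)$. What your route buys: it is self-contained modulo Lemma~\ref{lemma_half_space} rather than relying on the Weiss-energy lemma, and you make explicit the small check, left implicit in the paper, that the bracketed expressions in \eqref{half_space_twodim} vanish on $\{x_2\le 0\}$ because $z=\bar z=x_1$ and $1-\tfrac{m+1}{m+2}-\tfrac{1}{m+2}=0$, so that the $C^{1,1}$ gluing across the axis is built into the notation $z=x_1+i(x_2)_+$. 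One point you and the paper both finesse: the constants $a,b$ appearing in \eqref{half_space_twodim} are not the same $a,b$ as in $H=ar^me^{im\theta}+br^me^{-im\theta}$; matching them requires the prefactor $\tfrac{1}{4(m+1)}=\tfrac{1}{\lambda_{m+2}-\lambda_m}$, which you correctly leave folded into your unnamed leading constant $c$, just as the paper's statement deliberately says only ``$a,b$ are constants.''
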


\begin{Remark}
When $m=1$, \eqref{half_space_twodim} becomes 
\[
v(x) = 4 (x_2)_+^2 \left[ a (x_1 + \frac{i}{3} (x_2)_+) +  b (x_1 - \frac{i}{3} (x_2)_+) \right].
\]
When $m=2$, \eqref{half_space_twodim} becomes 
\[
v(x) = 2 (x_2)_+^2 \left[ a (3 x_1^2 + 2i x_1 (x_2)_+ - (x_2)_+^2) +  b (3 x_1^2 - 2i x_1 (x_2)_+ - (x_2)_+^2) \right].
\]
\end{Remark}

\begin{Remark}
If we consider a half space solution $v(x) = x_2^2 (a x_1 + b x_2)$ in $\{ x_2 > 0 \}$ where $a, b \in \mR$, one has $\Delta v = 2a x_1 + 6 b x_2$ in $\{ x_2 > 0 \}$. Then $H = 2 a x_1 + 6 b x_2$, and one can directly compute that 
\[
W_H(1,v) = \int_{B_1 \cap \{ x_2 > 0 \}} H v \,dx = c((2a)^2 + (6b)^2).
\]
Thus $W_H(1,v)$ only depends on the $L^2(S^1)$ norm of $H$, which is consistent with Lemma \ref{lemma_halfspace_weiss_energy}.
\end{Remark}

\begin{Remark}
In any dimension, examples of half space solutions include $(x_n)_+^3$ (for $m=1$) and $(x_n)_+^2 L(x')$ where $L$ is a harmonic homogeneous polynomial in the $x'$ variables.
\end{Remark}

\begin{proof}[Proof of Lemma \ref{lemma_half_space_twodim}]
In the case $m=0$, we may use \eqref{varphitilde_formula} (in any dimension) to have 
\[
\tilde{\varphi}(\omega) = a + A \omega \cdot \omega = (a \mathrm{Id} + A) \omega \cdot \omega, \qquad \omega \in S^{n-1},
\]
where $a$ is a constant and $A$ is a symmetric trace-free matrix. The conditions $\tilde{\varphi}|_{\p S^{n-1}_+} = \p_{\nu} \tilde{\varphi}|_{\p S^{n-1}_+} = 0$ mean that $a \mathrm{Id} + A|_{\{ e_n^{\perp} \}} = 0$ and $(a \mathrm{Id} + A) e_n = \lambda e_n$ for some constant $\lambda$. Thus $\tilde{\varphi}(\omega) = \lambda \omega_n^2$, which yields the half space solution $v = \lambda (x_n)_+^2$.

In the case $m \geq 1$ the half space solutions correspond to functions $\varphi(\omega)$ in $S^{n-1}_+$ satisfying 
\[
\tilde{\varphi}(\omega) = \frac{1}{\lambda_{m+2}-\lambda_m} H(\omega) + Q(\omega), \qquad \tilde{\varphi}|_{\p S^{n-1}_+} = \p_{\nu} \tilde{\varphi}|_{\p S^{n-1}_+} = 0,
\]
where $Q$ is a spherical harmonic of degree $m+2$. Let us next determine such functions when $n=2$ and $m \geq 1$. Then, writing $\theta$ for the angle corresponding to $\omega \in S^1$, 
\[
\varphi(\theta) = a e^{im\theta} + b e^{-im\theta} + c e^{i(m+2)\theta} + d e^{-i(m+2)\theta}.
\]
The conditions $\varphi(0) = \varphi(\pi) = 0$ give the equation 
\[
a+b+c+d=0,
\]
whereas the conditions $\varphi'(0) = \varphi'(\pi) = 0$ give the equation 
\[
m(a-b) + (m+2)(c-d) = 0.
\]
We obtain $d = c + \frac{m}{m+2}(a-b)$ and $a+b+2c+\frac{m}{m+2}(a-b)=0$. This leads to 
\[
c = -\frac{m+1}{m+2} a - \frac{1}{m+2} b, \qquad d = - \frac{1}{m+2} a - \frac{m+1}{m+2} b.
\]
Thus 
\begin{multline*}
\varphi(\theta) = a \left[ e^{im\theta} -\frac{m+1}{m+2} e^{i(m+2)\theta} - \frac{1}{m+2} e^{-i(m+2)\theta} \right] \\
 + b \left[ e^{-im\theta} - \frac{1}{m+2} e^{i(m+2)\theta} - \frac{m+1}{m+2} e^{-i(m+2)\theta} \right].
\end{multline*}
This gives the required formula for $v(r\omega) = r^{m+2} \varphi(\omega)$ for $\omega \in S^{n-1}_+$.
\end{proof}


\subsection{Classification of blowups in two dimensions}

The following result, which states that there are no blowup solutions supported in sectors of angle $\neq \pi$, is crucial in characterizing blowup solutions in two dimensions.

\begin{Lemma} \label{lemma_twodim_sector}
Let $\theta_0 \in (0,2\pi) \setminus \{ \pi \}$, let $C = \{ r e^{i\theta} \,:\, r > 0, \ \theta \in (0,\theta_0) \}$, and suppose that $u$ solves 
\[
\Delta u = H \text{ in $C$}, \qquad u|_{\theta=0} = \p_{\nu} u|_{\theta=0} = 0, \qquad u|_{\theta=\theta_0} = \p_{\nu} u|_{\theta=\theta_0} = 0
\]
where $H$ is a harmonic homogeneous polynomial of degree $m$ in $\mR^2$. Then $u \equiv 0$ and $H \equiv 0$.
\end{Lemma}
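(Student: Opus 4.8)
\emph{Overall strategy and reduction to a homogeneous $u$.} Since $H$ is homogeneous, I would first reduce to the case that $u$ itself is homogeneous of degree $m+2$, then separate variables to turn the statement into the assertion that a certain over-determined second-order ODE boundary value problem on $(0,\theta_0)$ has only the trivial solution, and finally verify this by an explicit computation in which the hypothesis $\theta_0 \notin \pi\mZ$ plays an essential role. For the reduction: given $\lambda > 0$, set $u_\lambda(x) = \lambda^{-(m+2)} u(\lambda x)$. As $H$ is homogeneous of degree $m$ we get $\Delta u_\lambda = H$ in $C$, and since $C$ and its two bounding rays are dilation invariant, $u_\lambda$ again has vanishing Cauchy data on $\{\theta = 0\}$ and $\{\theta = \theta_0\}$. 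Hence $w := u - u_\lambda$ is harmonic in $C$ with vanishing Cauchy data on the relatively open ray $\{\theta = 0\}$; by unique continuation for the Laplacian across this real-analytic, non-characteristic line (Holmgren's theorem, together with real-analyticity of $w$ on the connected set $C$ to propagate the vanishing) we conclude $w \equiv 0$. Thus $u = u_\lambda$ for all $\lambda > 0$, i.e.\ $u$ is homogeneous of degree $m+2$. (In the application to blow-up limits this step is unnecessary, since the relevant $u$ is homogeneous from the outset.)

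\emph{Separation of variables.} Write $u = r^{m+2} \psi(\theta)$ with $\psi \in C^1([0,\theta_0])$ and $H = r^m h(\theta)$; since $H$ is a harmonic homogeneous polynomial of degree $m$ one has $h(\theta) = p\, e^{im\theta} + \bar p\, e^{-im\theta}$ for some $p \in \mC$ (a real constant when $m = 0$). Using $\Delta = \partial_r^2 + r^{-1}\partial_r + r^{-2}\partial_\theta^2$, the equation becomes $\psi'' + (m+2)^2 \psi = h$ on $(0,\theta_0)$, while the vanishing Cauchy data on the two edges translate into the four boundary conditions $\psi(0) = \psi'(0) = \psi(\theta_0) = \psi'(\theta_0) = 0$. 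Put $\omega = m+2$. The unique solution of $\psi'' + \omega^2 \psi = h$ with $\psi(0) = \psi'(0) = 0$ is the Duhamel formula $\psi(\theta) = \omega^{-1}\int_0^\theta \sin(\omega(\theta - s)) h(s)\, ds$, and a short computation gives
\[
\psi'(\theta_0) - i\omega\, \psi(\theta_0) = e^{-i\omega\theta_0} \int_0^{\theta_0} e^{i\omega s}\, h(s)\, ds .
\]
Since $\psi(\theta_0)$ and $\psi'(\theta_0)$ are real, the two remaining boundary conditions are therefore equivalent to the single scalar identity $\int_0^{\theta_0} e^{i\omega s} h(s)\, ds = 0$.

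\emph{The decisive computation.} Substituting $h(s) = p\, e^{ims} + \bar p\, e^{-ims}$ and integrating, this identity reads, for $m \geq 1$ and after multiplying by $i$,
\[
p\, c_1 + \bar p\, c_2 = 0, \qquad c_1 = \frac{e^{i(2m+2)\theta_0} - 1}{2m+2}, \qquad c_2 = \frac{e^{2i\theta_0} - 1}{2} ;
\]
when $m = 0$ the two exponents coincide and the identity is $h\,(e^{2i\theta_0} - 1) = 0$, which forces $h = 0$ since $e^{2i\theta_0} \neq 1$. For $m \geq 1$, the formula $|e^{i\phi} - 1| = 2|\sin(\phi/2)|$ gives $|c_1| = |\sin((m+1)\theta_0)|/(m+1)$ and $|c_2| = |\sin\theta_0|$; hence $c_2 \neq 0$ because $\theta_0 \notin \pi\mZ$, and $|c_1| < |c_2|$ by the elementary strict inequality $|\sin(N\phi)| < N|\sin\phi|$, valid for every integer $N \geq 2$ and $\phi \notin \pi\mZ$ (an easy induction on $N$), applied with $N = m+1$. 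Now $c_1 = 0$ forces $\bar p = 0$, and if $c_1 \neq 0$ then $p c_1 = -\bar p c_2$ and taking moduli gives $|p|\,|c_1| = |p|\,|c_2|$, again forcing $p = 0$. In every case $p = 0$, so $h \equiv 0$, i.e.\ $H \equiv 0$; then $\psi'' + \omega^2\psi = 0$ with $\psi(0) = \psi'(0) = 0$ gives $\psi \equiv 0$, i.e.\ $u \equiv 0$.

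\emph{Main obstacle.} The only genuinely delicate part is the reduction to homogeneous $u$: one has to make sure that the hypotheses ("$u$ solves $\Delta u = H$ in $C$ with vanishing Cauchy data on the edges") provide enough regularity of $u$ up to the relatively open edges to apply unique continuation, and that real-analyticity of $u$ in the connected cone $C$ lets one spread the vanishing of $w$ from a neighborhood of one edge to all of $C$. One could also bypass the scaling argument: near an interior point of the edge $\{\theta = 0\}$, Cauchy uniqueness forces $u$ to coincide with the explicit homogeneous polynomial solving $\Delta v = H$, $v|_{x_2 = 0} = \partial_2 v|_{x_2 = 0} = 0$ furnished by Lemma~\ref{lemma_half_space} with $n = 2$, so $u$ equals that polynomial on all of $C$. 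Everything after the homogeneity reduction is a finite computation; the substance is that the over-determined ODE is governed by $|\sin((m+1)\theta_0)|/(m+1)$ versus $|\sin\theta_0|$, and these separate precisely because $m + 1 \geq 2$ and $\theta_0 \neq \pi$.
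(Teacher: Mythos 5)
Your proof is correct, and it takes a genuinely different and arguably cleaner route than the paper's. The paper's proof first writes down the explicit polynomial half-space solution $u_0$ from Lemma~\ref{lemma_half_space_twodim}, uses unique continuation to force $u=u_0$, imposes the Cauchy conditions at $\theta=\theta_0$ to obtain a $2\times 2$ matrix equation $A(z)\binom{a}{b}=0$ with $z=e^{i\theta_0}$, and after a determinant manipulation reduces the question to whether $\sin\theta_0=\pm\tfrac{1}{m+1}\sin((m+1)\theta_0)$ can hold, citing \cite{PaivarintaSaloVesalainen} for the fact that it cannot when $\theta_0\in(0,\pi)$. You instead separate variables ($u=r^{m+2}\psi(\theta)$, $H=r^m h(\theta)$ with $h=pe^{im\theta}+\bar p e^{-im\theta}$), solve $\psi''+(m+2)^2\psi=h$ with $\psi(0)=\psi'(0)=0$ by Duhamel's formula, and collapse the two remaining real boundary conditions at $\theta_0$ into the single complex moment condition
\[
\int_0^{\theta_0}e^{i(m+2)s}h(s)\,ds=0,
\]
which evaluates to $pc_1+\bar p c_2=0$ with $|c_1|=|\sin((m+1)\theta_0)|/(m+1)$ and $|c_2|=|\sin\theta_0|$; the elementary strict inequality $|\sin(N\phi)|<N|\sin\phi|$ for $N\ge 2$, $\phi\notin\pi\mZ$, then kills $p$. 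This is exactly the inequality underlying the paper's determinant being nonzero --- the paper's condition $|\sin((m+1)\theta_0)|=(m+1)|\sin\theta_0|$ is its equality case --- so your approach buys a self-contained proof where the paper relies on an external citation, and the Duhamel/moment reduction is tidier than the $2\times 2$ determinant computation. Two small remarks: the preliminary reduction to homogeneous $u$ via the scaling argument $w=u-u_\lambda$ is the most delicate step (as you yourself flag, it needs enough boundary regularity to run unique continuation across the edge); the cleaner alternative you mention in your last paragraph --- matching $u$ against the explicit polynomial of Lemma~\ref{lemma_half_space} near the edge $\theta=0$ --- is in fact precisely the paper's first step and is preferable here, though of course in the intended application $u$ is a blowup and hence already homogeneous. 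Also, in the $m=0$ branch it is notationally cleaner to write the identity as $(p+\bar p)(e^{2i\theta_0}-1)=0$ rather than $h(e^{2i\theta_0}-1)=0$; the conclusion is unchanged.
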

\begin{proof}
Let $u_0$ be the solution of $\Delta u_0 = H$ in $\mR^2_+$ with $u$ and $\p_n u$ vanishing on $\{ x_2 = 0 \}$ given in Lemma \ref{lemma_half_space_twodim}. As in the proof of Lemma \ref{lemma_half_space_twodim} we replace $u_0$ by its antipodally even/odd extension if $m$ is even/odd. Then $\Delta u_0 = H$ in $\mR^2$. If $m=0$ then $u_0 = \frac{a}{2} x_2^2$, whereas if $m \geq 1$ the function $u_0$ has the form 
\begin{multline*}
u_0(r e^{i\theta}) = a r^{m+2} \left[ e^{im\theta} -\frac{m+1}{m+2} e^{i(m+2)\theta} - \frac{1}{m+2} e^{-i(m+2)\theta} \right] \\
 + b r^{m+2 }\left[ e^{-im\theta} - \frac{1}{m+2} e^{i(m+2)\theta} - \frac{m+1}{m+2} e^{-i(m+2)\theta} \right].
\end{multline*}

Since $\Delta (u-u_0) = 0$ in $C$ and $u-u_0$ and $\p_n(u-u_0)$ vanish on $\{ \theta=0 \}$, the unique continuation principle implies that $u=u_0$ in $C$. It follows that $u_0$ and $\p_{\nu} u_0$ must vanish when $\theta=\theta_0 \neq \pi$. If $m = 0$, so that $u_0 = \frac{a}{2} x_2^2$, this is only possible if $a = 0$. Thus $u \equiv 0$ and $H \equiv 0$.

Let us now assume that $m \geq 1$. Writing $z = e^{i\theta_0}$, the condition that $u_0$ and $\p_{\nu} u_0$ vanish when $\theta=\theta_0$ means that 
\begin{gather*}
a \left[ z^m -\frac{m+1}{m+2} z^{m+2} - \frac{1}{m+2} z^{-m-2} \right]  + b \left[ z^{-m} - \frac{1}{m+2} z^{m+2} - \frac{m+1}{m+2} z^{-m-2} \right] = 0, \\
a \left[ m z^m - (m+1) z^{m+2} + z^{-m-2} \right]  + b \left[ -m z^{-m} -  z^{m+2} + (m+1) z^{-m-2} \right] = 0.
\end{gather*}
This pair of equations can be written in matrix form as $A(z) \left( \begin{array}{c} a \\ b \end{array} \right) = 0$, and there is a nontrivial solution iff $\det(A(z)) = 0$. Multiplying the first row of $A(z)$ by $m+2$ and adding the second row to the first row, we have that $\det(A(z))$ is a multiple of 
\[
\det \left( \begin{array}{cc} (2m+2) (z^m - z^{m+2})  & 2 (z^{-m} - z^{m+2})  \\
 m z^m - (m+1) z^{m+2} + z^{-m-2} & -m z^{-m} -  z^{m+2} + (m+1) z^{-m-2} \end{array} \right).
\]
After multiplying the second row by $2$ and subtracting the first row from the second, the determinant is a multiple of 
\[
\det \left( \begin{array}{cc} (2m+2) (z^m - z^{m+2})  & 2 (z^{-m} - z^{m+2})  \\
 -2 (z^m - z^{-m-2}) & -(2m+2) (z^{-m} - z^{-m-2}) \end{array} \right).
\]
The last determinant is equal to 
\begin{align*}
 &-4(m+1)^2(1 - z^{-2} - z^{2} + 1) + 4(1 - z^{2m+2} - z^{-2m-2} + 1) \\
 &= -4( (z^{m+1} - z^{-m-1})^2 - (m+1)^2 (z - z^{-1})^2 ).
\end{align*}
Recalling that $z = e^{i \theta_0}$, the last expression vanishes iff 
\[
\sin \theta_0 = \pm \frac{1}{m+1} \sin((m+1)\theta_0).
\]
This equation was already analyzed in \cite[Section 5]{PaivarintaSaloVesalainen} where it was proved that there are no solutions $\theta_0 \in (0,\pi)$. Since $\sin(\beta+\pi) = -\sin(\beta)$, it follows that there are no solutions $\theta_0 \in (\pi, 2\pi)$ either. This shows that when $\theta_0 \in (0,2\pi) \setminus \{\pi\}$ the determinant of $A(z)$ cannot vanish, and consequently one must have $u \equiv 0$ and $H \equiv 0$.
\end{proof}

\begin{Remark}
The conclusion of Lemma \ref{lemma_twodim_sector} fails if one drops the assumption that $H$ is harmonic. For instance, if $H = 2 \abs{x}^2$ then one has a solution $u = x_1^2 x_2^2$ in the first quadrant $C = \{ x_1, x_2 > 0 \}$ that vanishes to second order on $\p C$.
\end{Remark}

\begin{Remark}
If $0 < \theta_0 < \pi$, one can give an alternative proof of Lemma \ref{lemma_twodim_sector} as in \cite{PaivarintaSaloVesalainen} by integrating the equation $\Delta u = H \chi_C$ against harmonic functions that decay exponentially in $C$ and by using the Laplace transform.
\end{Remark}

The next result characterizes all blowup limits in two dimensions. Heuristically, one can think of the classification as follows:
\begin{itemize}
\item 
solutions with $\supp(v) = \emptyset$ correspond to boundary points where $D$ is thin (e.g.\ outward cusps);
\item 
solutions with $\supp(v)$ a half space correspond to points where $\p D$ is smooth;
\item 
solutions with $\supp(v) = \mR^2$ correspond to boundary points where $D$ has thin complement (e.g.\ inward cusps).
\end{itemize}
If $\p D$ is assumed to be Lipschitz then cusps are automatically excluded. The classification rules out the possibility that $\supp(v)$ is a sector of angle $\neq \pi$, which would correspond to points where $\p D$ would be Lipschitz but not $C^1$.

\begin{Theorem} \label{thm_blowup_classification}
Suppose that $v \in C^{1,1}_{\mathrm{loc}}(\mR^2)$ is homogeneous of degree $m+2$ and solves the equation 
\[
\Delta v = H \chi_{\{ v \neq 0 \}} \text{ in $\mR^2$}
\]
where $H$ is a harmonic homogeneous polynomial of degree $m$. Then one of the following holds:
\begin{enumerate}
\item[(a)] 
$\supp(v) = \emptyset$ and $v \equiv 0$ (so also $H \equiv 0$).
\item[(b)] 
$\supp(v)$ is a half space and $v$ is a polynomial of order $m+2$ having, after a rotation, the form given in Lemma \ref{lemma_half_space} or Lemma \ref{lemma_half_space_twodim}; or 
\item[(c)]
$\supp(v) = \mR^2$ and $v = \frac{1}{4m+4} \abs{x}^2 H + w$ where $w$ is a harmonic homogeneous polynomial of degree $m+2$.
\end{enumerate}
\end{Theorem}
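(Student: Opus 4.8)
The plan is to analyze the homogeneous function $v$ in polar coordinates, reducing the PDE to an ODE on the circle, and then to leverage Lemma \ref{lemma_twodim_sector} to rule out everything except the three listed alternatives. Write $v(r\omega) = r^{m+2}\varphi(\omega)$ with $\varphi \in C^{1,1}(S^1)$, and let $\Omega = \{ \varphi \neq 0 \} \subset S^1$ be the (relatively open) support set; since $v$ is homogeneous, $\supp(v)$ is a cone over $\ol{\Omega}$. The connected components of $\Omega$ are open arcs $I_1, I_2, \dots$, and on each arc $I_j = \{ r e^{i\theta} : \theta \in (\alpha_j, \beta_j) \}$ the function $v$ solves $\Delta v = H$ with $v$ and $\nabla v$ vanishing on the two boundary rays $\theta = \alpha_j$ and $\theta = \beta_j$ (the $C^{1,1}$ regularity of $v$ together with $v = 0$ on the complement forces both Cauchy data to vanish at the free boundary rays). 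If some arc $I_j$ has length $\beta_j - \alpha_j \neq \pi$ (and also is a proper subset of $S^1$, i.e.\ $\beta_j - \alpha_j < 2\pi$), Lemma \ref{lemma_twodim_sector} applied to the sector over $I_j$ forces $v \equiv 0$ there and $H \equiv 0$.

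First I would dispose of the case $H \equiv 0$: then $\Delta v = 0$ on $\{v \neq 0\}$, and the argument above shows every component arc has length exactly $\pi$ or equals all of $S^1$. If $\Omega = S^1$, then $v$ is an entire harmonic homogeneous polynomial of degree $m+2$, supported everywhere — but we also need $v$ to not vanish anywhere except at $0$, which for a degree $m+2 \geq 2$ harmonic polynomial in the plane is impossible (such a polynomial is $\re(cz^{m+2})$ up to rotation and has zeros on rays); so in fact $H \equiv 0$ forces the half-space or empty case. Actually the cleaner statement: if $H \equiv 0$ and $v \not\equiv 0$, some component has length $\pi$, giving a half-space solution which by Liouville-type reasoning must be the trivial harmonic polynomial $v \equiv 0$ there too, so $\Omega = \emptyset$, i.e.\ case (a). (One should double-check whether a length-$\pi$ arc with $\Delta v = 0$ and vanishing Cauchy data on both bounding rays can be nonzero: by unique continuation it cannot, so indeed $H \equiv 0 \Rightarrow v \equiv 0$, which is case (a).)

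Now assume $H \not\equiv 0$. By Lemma \ref{lemma_twodim_sector}, no component arc can have length in $(0,2\pi)\setminus\{\pi\}$, so each component has length exactly $\pi$ or $\Omega = S^1$. If $\Omega = S^1$ (the support is all of $\mR^2$), then $v$ solves $\Delta v = H$ in all of $\mR^2$; the particular solution $\frac{1}{4m+4}|x|^2 H$ works since $\Delta(|x|^2 H) = (4m + 2n)H = (4m+4)H$ when $n=2$ (using that $H$ is harmonic and homogeneous of degree $m$), and the difference $w = v - \frac{1}{4m+4}|x|^2 H$ is harmonic, homogeneous of degree $m+2$, hence a harmonic homogeneous polynomial — this is case (c). If $\Omega \neq S^1$, then $\Omega$ consists of one or more disjoint open arcs each of length exactly $\pi$; since two disjoint open arcs of length $\pi$ cannot coexist in $S^1$ (their total length would be $2\pi$ with a gap, a contradiction — actually two such arcs would have to be complementary, leaving no gap, hence no genuine free boundary between them; more carefully, two disjoint open arcs of length $\pi$ are impossible unless they are exactly the two complementary halves, but then the "complement" where $v = 0$ has measure zero and $v$ would extend continuously across, contradicting that these are separate components of $\{v \neq 0\}$), we conclude $\Omega$ is a single open half-circle. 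Then $\supp(v)$ is a half-space, and on it $v$ solves $\Delta v = H$ with vanishing Cauchy data on the bounding line; by the uniqueness in Lemma \ref{lemma_half_space}, $v$ is exactly the polynomial described there, which for $n = 2$ is the explicit form in Lemma \ref{lemma_half_space_twodim} — this is case (b).

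\textbf{Main obstacle.} The delicate point is the bookkeeping for multiple components of $\Omega$: one must rule out configurations like several length-$\pi$ arcs, or a length-$\pi$ arc together with leftover mass, using only the length constraint from Lemma \ref{lemma_twodim_sector} plus the topological fact that arcs of length $\pi$ in $S^1$ are very rigid. The argument that two disjoint open length-$\pi$ arcs force the zero set to have empty interior (hence $v$ extends harmonically, contradicting $H \not\equiv 0$, or else reduces to the half-space case) needs to be made carefully — in particular one must verify that when $\{v=0\}$ has empty interior but nonempty boundary, the pieces genuinely glue into a global solution so that Lemma \ref{lemma_twodim_sector} or the $\Omega = S^1$ analysis applies. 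A clean way to handle this is: the complement $S^1 \setminus \Omega$ is closed; if it has nonempty interior it contains an arc, whose two endpoints are endpoints of components of $\Omega$, each of which then has length $\pi$ by Lemma \ref{lemma_twodim_sector} — I would trace through the arithmetic on $S^1$ to show this is only consistent with exactly one component of length $\pi$ and one complementary gap of length $\pi$, yielding case (b); and if $S^1 \setminus \Omega$ has empty interior, then $v$ is $C^{1,1}$ with $\Delta v = H$ a.e.\ in $\mR^2$, giving case (c).
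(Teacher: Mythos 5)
Your overall strategy — reduce to the circle and invoke Lemma \ref{lemma_twodim_sector} to kill sectors of angle $\neq\pi$ — is the same as the paper's, but the way you decompose the support creates a genuine error and a gap, both of which the paper's implementation avoids.

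The error is in your very first paragraph: you claim that on each connected component $I_j=(\alpha_j,\beta_j)$ of $\Omega=\{\varphi\neq 0\}$, the Cauchy data of $v$ vanish on the bounding rays $\theta=\alpha_j,\beta_j$ because ``$C^{1,1}$ regularity of $v$ together with $v=0$ on the complement forces both Cauchy data to vanish.'' This is false when the endpoint is an \emph{isolated} zero of $\varphi$. Take $H\equiv 0$ and $v=\re(z^{m+2})$, so $\varphi(\theta)=\cos((m+2)\theta)$; the endpoints of the components of $\Omega$ are simple zeros of $\varphi$, at which $\p_\theta v=r^{m+2}\varphi'(\theta)\neq 0$, so the Neumann data does not vanish and Lemma \ref{lemma_twodim_sector} simply does not apply to those sectors. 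The Cauchy data vanishes on a boundary ray only when $v$ vanishes on a one-sided \emph{open} neighborhood of that ray, i.e.\ when the ray lies on $\p(\supp v)$. Your $H\equiv 0$ paragraph then compounds the mistake by concluding $H\equiv 0\Rightarrow v\equiv 0$ (case (a)); the example $v=\re(z^{m+2})$, $H\equiv 0$ has $\supp(v)=\mR^2$ and falls into case (c), not (a). The confusion there is that $\supp(v)=\mR^2$ requires only $\{v\neq 0\}$ to be dense, not $\varphi$ to be nonvanishing on $S^1$.

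The paper's proof sidesteps this entirely by working with $\supp(v)\cap S^1$ instead of $\{\varphi\neq 0\}$: pick any $\omega_0\in\supp(v)\cap S^1$ and let $I$ be the maximal closed arc in $\supp(v)\cap S^1$ containing $\omega_0$. Its endpoints (when $I$ is proper) are boundary points of $\supp(v)$, so $v$ vanishes identically on a one-sided arc and the $C^{1,1}$ regularity really does give vanishing Cauchy data. Lemma \ref{lemma_twodim_sector} then forces $|I|=\pi$. The ``bookkeeping'' worry you flag is handled in one line: any $\omega_1\in\supp(v)\setminus I$ would generate a second maximal arc $I'$, which is disjoint from $I$ by maximality of $I$ (otherwise $I\cup I'$ would be a larger connected arc through $\omega_0$), hence $|I'|<\pi$, and Lemma \ref{lemma_twodim_sector} gives $v\equiv 0$ on the cone over $I'$ — contradiction. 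This is cleaner than analyzing whether $S^1\setminus\Omega$ has interior and tracing arc arithmetic; your sketch in the ``main obstacle'' paragraph points in the right direction but would need the observation that the relevant closed set is $\supp(v)\cap S^1$, not $\ol\Omega$ component-by-component, before the endpoints genuinely carry vanishing Cauchy data.
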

\begin{proof}
Since $v$ is homogeneous, $\supp(v)$ is a closed conic set in $\mR^2$. If $\supp(v) = \emptyset$ then $v \equiv 0$ and $H \equiv 0$. If $\supp(v) = \mR^2$, then we have $\Delta v = H$ in $\mR^2$. The function $\frac{1}{4m+4} \abs{x}^2 H$ solves this equation, and hence $v = \frac{1}{4m+4} \abs{x}^2 H + w$ where $w$ is a harmonic homogeneous polynomial of degree $m+2$.

Let us now assume that $\supp(v) \neq \emptyset$ and $\supp(v) \neq \mR^2$. Then in particular $H \not\equiv 0$. Choose some $\omega_0 \in \supp(v) \cap S^1$ and let $I$ be the maximal closed connected arc on $S^1$ so that $\omega_0 \in I$ and $I \subset \supp(v)$. Since $v$ is $C^{1,1}$, it follows that $v$ and $\p_{\theta} v$ must vanish at the endpoints of $I$. Letting $C = \{ r \omega \,:\, r > 0, \ \omega \in I \}$ we see that $v$ satisfies 
\[
\Delta v = H \text{ in $C$}, \qquad v|_{\p C} = \p_{\nu} v|_{\p C} = 0.
\]
By Lemma \ref{lemma_twodim_sector} we know that $C$ must be a half space and thus $\supp(v)$ contains a closed half space. If there were some $\omega_1$ in $\supp(v)$ which is not in this half space, one could repeat the argument above and obtain a solution in a sector of angle $\theta_0$ with $0 < \theta_0 < \pi$. Another application of Lemma \ref{lemma_twodim_sector} would lead to a contradiction. This shows that $\supp(v)$ is indeed a half space.
\end{proof}

\section{Nondegeneracy and weak flatness} \label{sec_nondegeneracy}

Throughout this section, we will make the following standing assumptions. Let $q \in C^{\alpha}_{\mathrm{loc}}(B_2)$ and let $u \in C^{1,1}_{\mathrm{loc}}(B_2)$ solve 
\[
(\Delta + q) u = f \chi_D \text{ in $B_2$}, \qquad u|_{B_2 \setminus \ol{D}} = 0,
\]
where $D$ is a Lipschitz domain with $0 \in \p D$, and $f = P + R$ where $P \not\equiv 0$ is a homogeneous polynomial of order $m \geq 0$ and $R \in C^{\alpha}_{\mathrm{loc}}(B_2)$ satisfies $\abs{R(x)} \leq C \abs{x}^{m+\alpha}$ for some $\alpha > 0$. Note that under our assumptions, $\ol{B}_{r_0} \cap \ol{D} \subset \supp(f)$ for some $r_0 > 0$ since $P|_{S^{n-1}}$ cannot vanish in an open set.

In this setting, we say that $u_0 \in C^{1,1}(\ol{B}_1)$ is a blowup limit of $u$ at $0$ if there is a sequence $r_j \to 0$ so that the functions 
\[
u_{r_j}(x) = \frac{u(r_j x)}{r_j^{m+2}}, \qquad x \in \ol{B}_1,
\]
converge to $u_0$ in $C^1(\ol{B}_1)$.

We first prove a nondegeneracy result showing roughly that since $f$ vanishes precisely to order $m$, $u$ cannot vanish to order $> m+2$ in $\supp(u)$. 
It should be remarked that in free boundary problems nondegeneracy comes often naturally from the 
structure of the problem; e.g., in no-sign obstacle problems  a nonvanishing  right hand side is a key element in proving nondegeneracy \cite{PSU}.
In our setting, the problem does not have a variational formulation, and additionally, the r.h.s. may have varying vanishing order, depending on the free boundary point. Help comes from Lipschitz assumption of the free boundary, that does not allow solutions to decay faster than expected. These ideas are encoded in the proof given below.

\begin{Lemma} \label{lemma_nondegeneracy}
For any $\eps$ with $0 < \eps < 1$ there is $c_{\eps} > 0$ so that 
\[
\sup_{B(x, \eps \abs{x})} \abs{u} \geq c_{\eps} \abs{x}^{m+2} \text{ whenever $x \in \ol{D} \cap \ol{B}_{r_0}$}.
\]
\end{Lemma}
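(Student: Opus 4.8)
The plan is to argue by contradiction and to blow up at $0$. The mechanism that produces the contradiction is the following: if $u$ decayed faster than $|x|^{m+2}$ near a boundary point $x_j\to 0$, then at the scale $r_j=|x_j|$ one can find --- using that $D$ is Lipschitz --- a ball of radius comparable to $r_j$ lying deep inside $D$ and close to $x_j$, and on such a ball the equation $\Delta u=P+R-qu$ forces $u$ to coincide, up to a harmonic error and a lower-order term, with a fixed homogeneous polynomial $\Phi$ satisfying $\Delta\Phi=P$; after rescaling, the smallness of $u$ would make $z\mapsto\Phi(\cdot)$ harmonic on a ball, i.e.\ $P\equiv 0$, contradicting $P\not\equiv 0$. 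Note this uses only the equation and $P\not\equiv0$, not any a priori regularity of $u$ beyond the given $C^{1,1}$ (in fact $W^{2,p}_{\mathrm{loc}}$ suffices).

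So I would assume the conclusion fails for some $\eps\in(0,1)$, giving points $x_j\in\ol D\cap\ol B_{r_0}$ with $\sup_{B(x_j,\eps|x_j|)}|u|\le j^{-1}|x_j|^{m+2}$. Passing to a subsequence, $x_j\to x_*$. If $x_*\neq 0$, then letting $j\to\infty$ shows $u\equiv 0$, hence $f\equiv 0$ by the equation, on a nonempty open subset of $D\cap B_{r_0}$, contradicting $\ol B_{r_0}\cap\ol D\subseteq\supp(f)$; this case I would only sketch (the sole delicate point, keeping the open set inside $B_{r_0}$, being routine). Thus assume $r_j:=|x_j|\to 0$ and $\xi_j:=x_j/r_j\to\xi_0\in S^{n-1}$. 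After a rotation, $D\cap B_{r_0}=\{y_n>\eta(y')\}\cap B_{r_0}$ with $\eta$ Lipschitz and $\eta(0)=0$; the rescaled sets $D_j:=r_j^{-1}D$ are then locally the epigraphs of $\eta_j(y')=r_j^{-1}\eta(r_jy')$, which share the Lipschitz constant of $\eta$ and vanish at $0$. By Arzel\`a--Ascoli, $\eta_j\to\eta_\infty$ locally uniformly along a subsequence, and $D_\infty:=\{y_n>\eta_\infty(y')\}$ is open with $\p D_j\to\p D_\infty$ locally in Hausdorff distance. Since $\xi_j\in\ol{D_j}$, we get $\xi_0\in\ol{D_\infty}$, so I may choose $\omega\in D_\infty\cap B(\xi_0,\eps/4)$ and set $\delta:=d(\omega,\p D_\infty)\in(0,\eps/4)$.

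Next I would put $z_j:=r_j\omega\in D$ and $\rho_j:=d(z_j,\p D)=r_j\,d(\omega,\p D_j)$, so $\rho_j/r_j\to\delta$ and, for $j$ large, $B(z_j,\rho_j)\subseteq B(x_j,\eps r_j)\cap D$; hence $\sup_{B(z_j,\rho_j)}|u|\le j^{-1}r_j^{m+2}$. Fix a homogeneous polynomial $\Phi$ of degree $m+2$ with $\Delta\Phi=P$ (such $\Phi$ exists since $\Delta$ maps homogeneous polynomials of degree $m+2$ onto those of degree $m$). On $B(z_j,\rho_j)\subseteq D$ one has $\Delta(u-\Phi)=R-qu$, and $\|R-qu\|_{L^\infty(B(z_j,\rho_j))}\le C(r_j^{m+\alpha}+j^{-1}r_j^{m+2})=o(r_j^m)$; so if $N_j$ solves $\Delta N_j=R-qu$ in $B(z_j,\rho_j)$ with zero boundary data, then $\|N_j\|_{L^\infty}\le C\rho_j^2\|R-qu\|_{L^\infty}=o(r_j^{m+2})$, and $h_j:=u-\Phi-N_j$ is harmonic on $B(z_j,\rho_j)$ with $\|h_j\|_{L^\infty}\le Cr_j^{m+2}$ (using $|\Phi(y)|\le C|y|^{m+2}\le C(2r_j)^{m+2}$ there together with the smallness of $u$ and $N_j$). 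Rescaling, $g_j(z):=r_j^{-m-2}h_j(z_j+\rho_j z)$ is harmonic and uniformly bounded on $B_1$, and since $r_j^{-m-2}\Phi(z_j+\rho_j z)=\Phi(\omega+(\rho_j/r_j)z)$ by homogeneity while the $u$- and $N_j$-contributions tend to $0$ uniformly on $\ol B_1$, we get $g_j\to-\Phi(\omega+\delta\,\cdot\,)$ uniformly on $\ol B_1$. A uniform limit of harmonic functions is harmonic, so $z\mapsto\Phi(\omega+\delta z)$ is harmonic; but its Laplacian equals $\delta^2 P(\omega+\delta z)$, so $P$ vanishes on the ball $B(\omega,\delta)$, hence $P\equiv 0$ --- the desired contradiction.

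The step I expect to be the main obstacle is isolating this geometric input: one must know that the rescaled domains converge to a genuine open set $D_\infty$ with $\xi_0$ on its closure, so that a fixed nondegenerate ball $B(\omega,\delta)$ can be placed inside $\mathrm{int}(D_\infty)$ and still inside $B(\xi_0,\eps/4)$. This is exactly where the Lipschitz hypothesis on $D$ enters --- it is what prevents $u$ from decaying faster than expected --- and it is what turns the hypothetical over-decay of $u$ near $x_j$ into the statement $P\equiv 0$. Everything after that is standard (Newtonian-potential bounds, interior estimates, and convergence of harmonic functions).
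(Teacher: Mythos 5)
Your proposal is correct and reaches the same contradiction as the paper --- over-decay of $u$ at a scale $r_j=|x_j|$, combined with the existence of a ball of radius comparable to $r_j$ lying inside $D$ near $x_j$ (guaranteed by the Lipschitz hypothesis), forces $P\equiv 0$ --- but the details run along a genuinely different track. The paper's proof is leaner on both fronts. Geometrically, it extracts the ball directly from the interior cone property (one line: if $x\in\ol D\cap\ol B_{r_0}$, then $B(x,r)\cap\ol D$ contains a ball of radius $\delta r$ for all $r\le r_0$), whereas you rebuild the blowup domain $D_\infty$ via Arzel\`a--Ascoli on the rescaled Lipschitz graphs $\eta_j(y')=r_j^{-1}\eta(r_jy')$ and then locate $\omega$ and $\delta$ inside $D_\infty$; both are valid, but yours needs more bookkeeping to confirm $\omega\in D_j$ and $d(\omega,\p D_j)\to\delta$, which the interior-cone route packages away. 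Analytically, the paper rescales $u$ itself and reads off directly that $\Delta u_{r_j}\to P$ in $L^\infty$ on the ball while $u_{r_j}\to 0$ there, concluding $P\equiv 0$ by uniqueness of distributional limits; you instead pick an explicit $(m+2)$-homogeneous particular solution $\Phi$ with $\Delta\Phi=P$, peel off a Newtonian-potential correction $N_j$ for the $R-qu$ remainder, and pass to the limit through harmonic-function convergence of $g_j:=r_j^{-m-2}(u-\Phi-N_j)(z_j+\rho_j\cdot)$. Your decomposition is more explicit and self-contained (it makes transparent exactly what vanishes and why, and as you note requires only $W^{2,p}_{\rm loc}$ regularity plus the contradiction hypothesis), at the cost of constructing $\Phi$ and $N_j$; the paper's version buys brevity by invoking distributional convergence instead. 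Two small things worth tightening in your write-up: (i) $\delta:=d(\omega,\p D_\infty)$ need not be $<\eps/4$ if $\xi_0$ lies in the interior of $D_\infty$ (which can happen since $x_j\in\ol D$, not necessarily $\p D$); just replace $\delta$ by $\min(d(\omega,\p D_\infty),\eps/8)$. (ii) The displayed bound $\|N_j\|_{L^\infty}\le C\rho_j^2\|R-qu\|_{L^\infty}$ and the verification that $B(z_j,\rho_j)\subset B(x_j,\eps r_j)$ for large $j$ both deserve a sentence; the latter uses $|z_j-x_j|=r_j|\omega-\xi_j|<r_j\eps/2$ and $\rho_j<r_j\eps/3$ eventually.
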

\begin{proof}

The condition for $\supp(f)$ ensures that $\ol{B}_{r_0} \cap \ol{D} \subset \supp(\Delta u)$ and hence also $\ol{B}_{r_0} \cap \ol{D} \subset \supp(u)$. In particular, $\supp(u) \cap \ol{B}_{r_0} = \ol{D} \cap \ol{B}_{r_0}$. Since $D$ is a Lipschitz domain, there is $\delta > 0$ so that possibly after shrinking $r_0$ we have the following consequence of the interior cone property:
\begin{equation} \label{interior_ball}
\text{If $x \in \ol{D} \cap \ol{B}_{r_0}$, then $B(x, r) \cap \ol{D}$ contains a ball of radius $\delta r$ whenever $r \leq r_0$.}
\end{equation}

We now prove the lemma. If the claim were not true, then there would be some $\eps \in (0,1)$ so that for any $j \geq 1$ there is $x_j \in \ol{D} \cap \ol{B}_{r_0}$, $x_j \neq 0$, so that 
\begin{equation} \label{nondegeneracy_contradiction}
\sup_{B(x_j, \eps \abs{x_j})} \abs{u} < \frac{1}{j} \abs{x_j}^{m+2}.
\end{equation}
If one had $\abs{x_j} \geq c > 0$ for all $j$, then after passing to a subsequence there would be $x \in \ol{D} \cap \ol{B}_{r_0}$ with $\abs{x} \geq c$ so that $u|_{B(x, \eps\abs{x}/2)} = 0$. This would contradict the fact that $\supp(u) \cap \ol{B}_{r_0} = \ol{D} \cap \ol{B}_{r_0}$. Thus we may assume, after taking a subsequence, that $x_j \to 0$.

Let $r_j = \abs{x_j}$ and define the rescaled function 
\[
u_{r_j}(x) = \frac{u(r_j x)}{r_j^{m+2}}.
\]
Now using \eqref{interior_ball} with $x=x_j$ and $r = \eps r_j$, for any $j \geq 1$ there is $y_j$ so that 
\begin{equation} \label{nondegeneracy_interior_ball_second}
B(y_j, \delta \eps r_j) \subset B(x_j, \eps r_j) \cap \ol{D}.
\end{equation}
It also follows that $(1-\eps) r_j \leq \abs{y_j} \leq (1+\eps) r_j$. Then by \eqref{nondegeneracy_contradiction} we have 
\[
\sup_{B(y_j, \delta \eps r_j)} \abs{u} < \frac{1}{j} r_j^{m+2}.
\]
Writing $z_j = y_j/r_j$, this means that 
\[
\sup_{B(z_j, \delta \eps)} \abs{u_{r_j}} < \frac{1}{j}.
\]
By taking another subsequence we may assume that $z_j$ converges to some $z$ with $1-\eps \leq \abs{z} \leq 1+\eps$. Thus we have 
\begin{equation} \label{nondegeneracy_uj_convergence}
u_{r_j} \to 0 \text{ in $L^{\infty}(B(z, \delta \eps/2))$}.
\end{equation}
On the other hand we have 
\[
\Delta u_{r_j}(x) = r_j^{-m} \Delta u(r_j x) = -q r_j^2 u_{r_j}(x) + (P(x) + r_j^{-m} R(r_j x)) \chi_D(r_j x).
\]
Now \eqref{nondegeneracy_interior_ball_second} implies that 
\[
\Delta u_{r_j}(x) = -q r_j^2 u_{r_j}(x) + P(x) + r_j^{-m} R(r_j x) \text{ in $B(z_j, \delta \eps)$}.
\]
Since $r_j \to 0$ and $\abs{u_{r_j}} \leq C$, $\abs{R} \leq C \abs{x}^{m+\alpha}$, we have 
\[
\Delta u_{r_j} \to P \text{ in $L^{\infty}(B(z, \delta \eps/2))$}.
\]
Since $P \not\equiv 0$, this contradicts \eqref{nondegeneracy_uj_convergence} by the uniqueness of distributional limits.
\end{proof}

\begin{Remark}
        An alternative proof for the above lemma goes as follows: 
Take $B_{2r}(z) \subset D$, close to the origin, and such that $P > 2\delta > 0$ in $B_r(z)$. Hence we conclude that $f > \delta  $ in $B_r(z)$.
 This can be done due to Lipschitz regularity of $\partial D$, and that the zero set of the polynomial $P$ is a cone. By  scaling $u$ of order $(m+2)$  we have 
$\Delta u_r = f >  \delta  $, in $B_1(z/r)$. 
 L.A. Caffarelli's non-degeneracy for the obstacle type problem (see \cite[Lemma 3.1]{PSU}) applies to obtain 
 $$
 \sup_{B_{1/2}(z/r)} u_r \geq  \frac{\delta}{8n}  + u_r(z) .
 $$
 Now either  $  u_r (z) \leq  -\delta /16n $ or 
 $ u_r (z) \geq  - \delta/16n $, and in both cases 
 we have the  nondegeneracy  of $|u|$ of order $m+2$.
 \end{Remark}

Next we establish some weak flatness properties for $\p D$ at $0$ if the support of some blowup limit is a half space. 

\begin{Lemma} \label{lemma_weak_flatness}
Suppose that $u_0$ is a blowup limit at $0$ with $\supp(u_0) = \ol{\mR^n_+}$. Then for any $\delta > 0$ there is $r > 0$ so that $\p D \cap B_{r} \subset \{ \abs{x_n} \leq \delta r \}$.
\end{Lemma}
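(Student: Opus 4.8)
The plan is to argue by contradiction, using only the Lipschitz geometry of $D$ together with the identity $\supp(u)\cap\overline B_{r_0}=\overline D\cap\overline B_{r_0}$ established in the proof of Lemma~\ref{lemma_nondegeneracy} (so that near $0$ the set where $u$ may be nonzero is exactly $\overline D$), and the convergence of rescalings. No monotonicity formula is needed here. Fix a sequence $r_j\to0$ with $u_{r_j}\to u_0$ in $C^1(\overline B_1)$ and $\supp(u_0)=\overline{\mR^n_+}$. Suppose the conclusion fails for some $\delta>0$: then for every $r>0$ there is a point of $\p D\cap B_r$ with $|x_n|>\delta r$, so for each $j$ we may pick $z_j\in\p D\cap B_{r_j}$ with $|(z_j)_n|>\delta r_j$. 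Passing to a subsequence, $z_j/r_j\to z_0\in\overline B_1$ with $|(z_0)_n|\ge\delta$, and we treat the two cases $(z_0)_n\ge\delta$ and $(z_0)_n\le-\delta$ separately.

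\emph{Case $(z_0)_n\ge\delta$ (boundary points above the limiting hyperplane).} For large $j$ the point $z_j$ lies at height $(z_j)_n\ge\tfrac{\delta}{2}r_j$. Since $D$ is Lipschitz, near $z_j$ there is an exterior ball $B_j\subset\mR^n\setminus\overline D$ of radius comparable to $r_j$, and because $z_j$ sits at distance $\gtrsim r_j$ from $\{x_n=0\}$ the ball can be placed so that $B_j\subset\{x_n>0\}$ as well. By the support identity $u\equiv0$ on $B_j$. Rescaling by $1/r_j$, the balls $B_j/r_j$ have radius bounded below, lie in $\{x_n>0\}\cap B_1$, and carry $u_{r_j}\equiv0$; passing to a further subsequence they converge to a nonempty open ball $B^*\subset\{x_n>0\}$, and uniform convergence $u_{r_j}\to u_0$ gives $u_0\equiv0$ on $B^*$. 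But an open set on which $u_0$ vanishes is disjoint from $\overline{\{u_0\neq0\}}=\supp(u_0)=\overline{\mR^n_+}$, contradicting $B^*\subset\mR^n_+$.

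\emph{Case $(z_0)_n\le-\delta$ (boundary points below the limiting hyperplane).} Now the interior cone property of $D$ provides an interior ball $B_j\subset D$ of radius comparable to $r_j$ near $z_j$, and since $(z_j)_n\le-\tfrac{\delta}{2}r_j$ it can be placed so that $B_j\subset\{x_n<0\}$. On $B_j$ one has $\Delta u=f=P+R$, hence $\Delta u_{r_j}=P+r_j^{-m}R(r_j\,\cdot\,)$ on $B_j/r_j$; since $r_j^{-m}R(r_j\,\cdot\,)\to0$ locally uniformly and $u_{r_j}\to u_0$ in $C^1$ (so $\Delta u_{r_j}\to\Delta u_0$ in $\mathscr D'$), passing to the limit along a subsequence with $B_j/r_j\to B^*$ yields $\Delta u_0=P$ on the nonempty open ball $B^*\subset\{x_n<0\}$. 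But $u_0\equiv0$ on the open set $\{x_n<0\}\supset B^*$ (it vanishes off $\supp(u_0)$), so $\Delta u_0\equiv0$ there, forcing $P\equiv0$ on $B^*$ — impossible for a nonzero homogeneous polynomial. Either case gives a contradiction, proving the lemma.

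The argument is largely routine once the cone geometry is in place; the step that needs the most care is the placement of the exterior, resp.\ interior, balls near $z_j$ — they must simultaneously have radius comparable to $r_j$ \emph{and} lie strictly on the correct side of $\{x_n=0\}$, and this is exactly where the hypothesis $|(z_j)_n|>\delta r_j$ enters, keeping $z_j$ (hence a ball of size $\sim r_j$ on the appropriate side of $\p D$) at a definite distance from the hyperplane. A secondary, but entirely elementary, point is the passage from $C^1$ convergence of $u_{r_j}$ to distributional convergence of the Laplacians used in the second case.
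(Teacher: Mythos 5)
Your proof is correct, and while Case~1 (boundary points above the hyperplane) is essentially the paper's argument, Case~2 takes a genuinely different and slightly more elementary route. The paper handles $J_-$ by invoking the nondegeneracy Lemma~\ref{lemma_nondegeneracy}: it uses $\sup_{B(x^{(j)},\eps|x^{(j)}|)}|u|\ge c_\eps |x^{(j)}|^{m+2}$, rescales to obtain a lower bound for $|u_0|$ near a limit point $y$ with $y_n\le-\delta$, and derives the contradiction from there. You instead place an interior ball of radius $\sim r_j$ strictly inside $D\cap\{x_n<0\}$, pass to the limit in the rescaled PDE $\Delta u_{r_j} = -r_j^2 q(r_j\cdot)u_{r_j}+P+r_j^{-m}R(r_j\cdot)$ on that ball (the $q$-term you omitted from the display still vanishes in the limit, so this is a cosmetic omission, not a gap), and conclude $\Delta u_0 = P$ on a nonempty open ball $B^*\subset\{x_n<0\}$ where $u_0\equiv 0$, forcing $P\equiv 0$ on $B^*$ — impossible for a nonzero polynomial. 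This sidesteps nondegeneracy entirely and only relies on the support identity from the proof of Lemma~\ref{lemma_nondegeneracy}, so your argument is somewhat more self-contained. You also silently unify the paper's separate treatment of $m=0$ (where it uses the explicit form $u_0=a(x_n)_+^2$) and $m\ge1$ in Case~1 by running the exterior-ball argument for all $m$, which is a minor simplification. The one step you should spell out — and you correctly flag it — is that the interior/exterior balls coming from the Lipschitz cone condition must be placed inside $B(z_j,\tfrac{\delta}{2}r_j)$ so that they sit strictly on the relevant side of $\{x_n=0\}$ while still having radius comparable to $r_j$; that is exactly what $|(z_j)_n|>\delta r_j$ provides. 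With that quantified, the proof is complete.
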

\begin{proof}
Since $u_0$ is a blowup limit, there is a sequence $(u_{r_j})$ with $r_j \to 0$ so that $u_{r_j} \to u_0$ in $C^1(\ol{B}_1)$. It is enough to show that for any $\delta > 0$ there is $j$ so that $\p D \cap B_{r_j} \subset \{ \abs{x_n} \leq \delta r_j \}$. If this is not true, then there is $\delta > 0$ so that for any $j$ there is a point $x^{(j)} \in (\p D \cap B_{r_j}) \cap \{ \abs{x_n} > \delta r_j \}$. Then at least one of the sets $J_{\pm} = \{ j \,:\, x^{(j)} \in (\p D \cap B_{r_j}) \cap \{  \pm x_n > \delta r_j \} \}$ is infinite.

Assume that $J_+$ is infinite. After passing to suitable subsequences, we have $x^{(j)} \in  (\p D \cap B_{r_j}) \cap \{  x_n > \delta r_j \}$ for all $j \geq 1$. If $m=0$, so that $u_0 = a (x_n)_+^2$ is nonvanishing in $\mR^n_+$, we can reach a contradiction simply as follows: if we take a subsequence so that $y^{(j)} = x^{(j)}/r_j$ converges to some $y \in \ol{B}_1$, then $u_{r_j}(y^{(j)}) = 0$ implies that $u_0(y) = 0$, which is impossible since $y_n \geq \delta$.

In the case $m \geq 1$ we need to be a bit more careful. We use the following consequence of the exterior cone property of Lipschitz domains: there are $r_0, \eps > 0$ and $\omega \in S^{n-1}$ so that for any $z \in \p D \cap B_{r_0}$, the ball $B(z - \rho \omega, \eps \rho)$ is outside $\ol{D}$ whenever $0 < \rho \leq \eps$. Now since $x^{(j)} \in \p D$, we may choose $\rho_j = \frac{\delta}{4} r_j$ so that for $j$ large one has 
\[
B(x^{(j)} - \rho_j \omega, \eps \rho_j) \subset \ol{D}^c.
\]
Let $y^{(j)} = \frac{x^{(j)} - \rho_j \omega}{r_j}$. Then 
\begin{equation} \label{urj_ball_vanishing}
u_{r_j}|_{B(y^{(j)}, \eps \delta/4)} = 0.
\end{equation}
Since $x^{(j)}_n > \delta r_j$, we have $y^{(j)}_n > \frac{3\delta}{4}$. After taking a subsequence we may assume that $y^{(j)} \to y$ in $\ol{B}_1$ where $y_n \geq \frac{3\delta}{4}$. From \eqref{urj_ball_vanishing} we get that 
\[
u_0|_{B(y, \eps \delta/8)} = 0.
\]
This contradicts the fact that $\supp(u_0) = \ol{\mR^n_+}$.

Now assume that $J_-$ is infinite. After passing to subsequences, we may assume that $x^{(j)} \in  (\p D \cap B_{r_j}) \cap \{  x_n < -\delta r_j \}$ for all $j \geq 1$. Write $y^{(j)} = x^{(j)}/r_j$, so that after taking a subsequence we have $y^{(j)} \to y \in \ol{B}_1$ with $y_n \leq -\delta$. This also implies that $\delta \leq \frac{\abs{x^{(j)}}}{r_j} \leq 1$. By the nondegeneracy result in Lemma \ref{lemma_nondegeneracy}, for any $\eps > 0$ there is $c_{\eps}$ so that 
\[
\sup_{B(x^{(j)}, \eps \abs{x^{(j)}})} \abs{u} \geq c_{\eps} \abs{x^{(j)}}^{m+2}.
\]
After rescaling this gives 
\[
\sup_{B(y^{(j)}, \eps)} \abs{u_{r_j}} \geq c_{\eps} \delta^{m+2}.
\]
Since $u_{r_j} \to u_0$ in $C^1(\ol{B}_1)$, we also have 
\[
\sup_{B(y, 2\eps)} \abs{u_0} \geq c_{\eps} \delta^{m+2}.
\]
But if we choose $\eps < \delta/2$ this contradicts the fact that $\supp(u_0) = \ol{\mR^n_+}$. We have thus proved the lemma.
\end{proof}

Our final lemma proves $C^1$ regularity of the free boundary of convex domains that have unique supporting planes.

\begin{Lemma} \label{lemma_convex_c1}
Let $D \subset \mR^n$ be an open bounded  set which is convex. Suppose that for any $z \in \p D \cap B_{r_0} (x^0)$, with  $x^0 \in \p D$, there is a unique supporting plane.
Then $\p D \cap B_{r_0/2} (x^0)$ is $C^1$ near $x^0$.
\end{Lemma}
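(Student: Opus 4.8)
\noindent\emph{Proof plan.} The plan is to reduce the statement to the classical fact that a finite convex function which is differentiable at every point of an open set is automatically $C^1$ there; the bridge is the standard dictionary between supporting hyperplanes of a convex body and subgradients of the convex function whose graph describes its boundary. First I would fix coordinates: after a rigid motion I may assume $x^0 = 0$ and that $\{x_n=0\}$ is a supporting hyperplane of $D$ at $0$ with $D$ lying locally in $\{x_n>0\}$. Since $D$ is open, bounded and convex, its boundary is locally a Lipschitz graph \cite[Corollary 1.2.2.3]{Grisvard}, and the graphing function is convex because $D$ is: there are $\rho,\rho'>0$ and a convex function $\varphi \colon B'_\rho(0')\to[0,\infty)$ with $\varphi(0')=0$ and $\sup_{B'_\rho(0')}\varphi<\rho'$ such that, writing $Q = B'_\rho(0')\times(-\rho',\rho')$, one has $Q\subset B_{r_0/2}(x^0)$ and $D\cap Q = \{(x',x_n)\in Q \,:\, x_n>\varphi(x')\}$; in particular $\p D\cap Q = \{(x',\varphi(x')) \,:\, x'\in B'_\rho(0')\}$.

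Next I would establish, for $z'\in B'_\rho(0')$ and $z=(z',\varphi(z'))$, that the supporting hyperplanes of $D$ at $z$ are exactly the graphs $\{x_n=\varphi(z')+\xi\cdot(x'-z')\}$ with $\xi$ in the subdifferential $\p\varphi(z')$. The key point is that for a convex set a hyperplane $H$ through a boundary point supports the set if and only if it supports its intersection with $Q$ there: if some $p\in D$ lay strictly on the wrong side of $H$, the half-open segment $(z,p]$ — which lies in $D$ since $z\in\ol D$ and $p\in D$ — would contain points of $D\cap Q$ strictly on the wrong side arbitrarily close to $z$, a contradiction. Applying this both to $D$ and to the convex set $\mathrm{epi}(\varphi)$, and noting that $D\cap Q$ is the part of $\mathrm{epi}(\varphi)$ lying in $Q$, the supporting hyperplanes of $D$ at $z$ are exactly the affine supports of $\varphi$ at $z'$ — there are no vertical ones, since $z'$ is interior to $B'_\rho(0')$. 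Hence the hypothesis that every $z\in\p D\cap B_{r_0}(x^0)$ has a unique supporting plane forces $\p\varphi(z')$ to be a singleton for every $z'\in B'_\rho(0')$, i.e.\ $\varphi$ is differentiable throughout $B'_\rho(0')$. I expect this local-to-global passage for supporting hyperplanes, together with the bookkeeping placing the chart $Q$ inside $B_{r_0/2}(x^0)$, to be the only genuinely delicate point.

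Finally I would invoke, or quickly reprove, the standard fact that a convex function differentiable at every point of an open set is $C^1$ there: $\nabla\varphi$ is locally bounded on $B'_\rho(0')$ because convex functions are locally Lipschitz, and if $x'_k\to x'$ then any limit point $\xi$ of the sequence $\nabla\varphi(x'_k)$ satisfies $\varphi(y')\ge\varphi(x')+\xi\cdot(y'-x')$ for all $y'$ (pass to the limit in the subgradient inequality at $x'_k$), so $\xi\in\p\varphi(x')=\{\nabla\varphi(x')\}$; thus $\nabla\varphi(x'_k)\to\nabla\varphi(x')$, giving continuity of $\nabla\varphi$. Therefore $\varphi\in C^1(B'_\rho(0'))$, so $\p D\cap Q$ is a $C^1$ hypersurface, and since $Q$ contains a neighbourhood of $x^0$ inside $B_{r_0/2}(x^0)$ this proves the lemma; everything besides the dictionary step above is routine convex analysis.
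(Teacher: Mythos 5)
Your argument is correct, but it takes a genuinely different route from the paper. The paper proves the lemma by a blow--up/compactness contradiction in the style of the rest of the article: assuming a uniform interior cone of opening $\pi-\eps$ fails at scales $r_j$ around points $z^j$, it rescales $D_j=(D-z^j)/r_j$, passes to a limit domain, uses the unique--supporting--plane hypothesis to identify that limit as a half--space, and derives a contradiction. You instead invoke the classical convex--analysis dictionary: write $\p D$ locally as the graph of a convex function $\varphi$, show that non\-/vertical supporting hyperplanes of $D$ at $(z',\varphi(z'))$ correspond bijectively to subgradients $\xi\in\p\varphi(z')$ (after checking that supporting $D$ is equivalent to supporting $D\cap Q$, and that no vertical supporting plane can occur at an interior chart point), so the uniqueness hypothesis forces $\p\varphi(z')$ to be a singleton, and then finish with the standard fact that a convex function differentiable throughout an open set is automatically $C^1$ there. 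Your route is more elementary and self\-/contained; it avoids having to make precise the Hausdorff/characteristic--function convergence of the rescaled domains, the upgrade from the hypothesis to ``the blow--up of $D$ at $z^*$ is a half--space,'' and the passage from a uniform interior cone condition to $C^1$ (all of which the paper's one\-/paragraph sketch leaves implicit). The paper's approach, on the other hand, is stylistically uniform with the scaling arguments used elsewhere (Lemmas \ref{lemma_nondegeneracy}, \ref{lemma_weak_flatness}) and generalizes verbatim to situations where only a weak--flatness property at each nearby boundary point, rather than literal convexity, is available. The one step in your write--up that deserves a sentence more care is the no\-/vertical\-/support check, but your stated reason (both sides of any vertical hyperplane through an interior chart point contain points of the open graph region) is sound.
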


\begin{proof}
It suffices to show that for each $\eps >0 $ there is $r_\eps $ such that for any $z \in \p D \cap B_{r_0/2} (x^0)$ we have a cone $C_{z,\eps }$ of opening $\pi - \eps $ and vertex $z$,   such that its   truncation
\[
C_{z,\eps }^{r_\eps } = C_{z, \eps }  \cap B_{r_\eps}(z)
\]
is inside $D$. If this fails, 
then for a given $\eps > 0$ 
we have  
$z^j \in \p D \cap B_{r_j/2} (x^0)$, and   $r_j$ such that the corresponding truncated cone is not in $D$, meaning  
\begin{equation}\label{eq:cone}
    C_{z^j,\eps }^{r_j } \setminus D \neq \emptyset. 
\end{equation}
In particular this holds for the scaled version of $D$, which we call $D_j = \frac{D - z^j}{r_j}$.
Upon letting $j \to \infty $, and hence $r_j \to 0$,  we arrive at a limit domain $D_0$. By assumptions there is also a half space representing the set $D_0$. This  does not align with the  assumption \eqref{eq:cone}, in scaled version, that $C_{z,\eps }^{1} \setminus D_j \neq \emptyset$, where now $D_j$ is an scaled version of $D$ at $x^j$. Hence we arrive at a contradiction and the lemma is proved. 
 \end{proof}

\section{Proofs of main theorems} \label{sec_fb_regularity}

We can now give the proofs of the main theorems in the introduction. We begin with the result on basic properties of blowups.

\begin{proof}[Proof of Theorem \ref{thm_fbp_steps}]
This follows by combining Lemma \ref{lemma_product_vanishing}, Theorem \ref{thm_c11_regularity}, and Lemmas \ref{lemma_weiss_blowup}, \ref{lemma_nondegeneracy} and \ref{lemma_weak_flatness}.
\end{proof}

Next we prove the classification of blowup solutions in two dimensions when the boundary is Lipschitz and one has nondegeneracy.

\begin{proof}[Proof of Theorem \ref{thm_blowup_classification_intro}]
Theorem \ref{thm_blowup_classification} ensures that the support of any blowup limit $v$ is either $\emptyset$, a half space or $\mR^2$. The case $\supp(v) = \mR^2$ is ruled out by the assumption that $D$ has Lipschitz boundary (so $v$ vanishes in some exterior cone). The case $\supp(v) = \emptyset$ is not possible  by the nondegeneracy result in Theorem \ref{thm_fbp_steps}. Thus the support of any blowup limit $v$ must be a half space.
\end{proof}

We can now prove the two-dimensional free boundary regularity result.

\begin{proof}[Proof of Theorem \ref{thm_fb_2d}]
Suppose that $\Delta u = f \chi_D$ in $B_2$ and $u=0$ outside $D$, where $f = H + R \in C^{\alpha}_{\mathrm{loc}}(B_2)$ with $H \not\equiv 0$ a harmonic homogeneous polynomial of degree $m$ and $|R(x)| \leq C |x|^{m+\alpha}$. Theorem \ref{thm_fbp_steps} gives optimal regularity, i.e.\ $u \in C^{1,1}_{\mathrm{loc}}(B_2)$ with 
\[
|u(x)| + |x| \,|\nabla u(x)| + |x|^2 \, |\nabla^2 u(x)| \leq C |x|^{m+2}.
\]
It also follows that any blowup limit $v$ of $u$ is homogeneous of degree $m+2$ and solves 
\[
\Delta v = H \chi_{ \{ v \neq 0 \} } \text{ in $\mR^n$.}
\]

Now we invoke the assumption that $H$ is harmonic and $n=2$. Theorem \ref{thm_blowup_classification_intro} ensures that the support of any blowup limit $v$ must be a half space. Suppose that $\{x \cdot e \geq 0 \}$ is the support of some blowup limit, where $e$ is a unit vector. Then the weak flatness property in Theorem \ref{thm_fbp_steps} ensures that for any $\delta > 0$ there is $r > 0$ so that $\p D \cap B_{r} \subset \{ \abs{x \cdot e} \leq \delta r \}$.

Suppose now that $\p D$ is piecewise $C^1$. Then near $0$, $\p D$ is the union of two $C^1$ arcs meeting at $0$ at some angle $\theta \in (0,2\pi)$. If $\theta \neq \pi$, one gets a contradiction with the weak flatness property above. This implies that $\theta = \pi$ and the tangent vectors of the two $C^1$ arcs are parallel at $0$. Thus $\p D$ is the graph of a $C^1$ function near $0$.

Similarly, if $D$ is convex and if one has decompositions $f = H + R$ at all points $z \in \p D$ near $0$, then one has the weak flatness property at such points $z$. Thus there is a unique supporting plane at boundary points near $0$. Lemma \ref{lemma_convex_c1} then implies that $\p D$ is $C^1$ near $0$.

\end{proof}

We also prove the result for edge points in dimension $n \geq 3$.

\begin{proof}[Proof of Theorem \ref{thm_fb_polyhedron}]
We suppose that $0 \in \p D$ is an edge point. The first two steps are the same as in two dimensions and follow from Theorem \ref{thm_fbp_steps}:
\begin{itemize}
\item 
Optimal regularity: $u \in C^{1,1}_{\mathrm{loc}}(B_2)$ with 
\[
|u(x)| + |x| \,|\nabla u(x)| + |x|^2 \, |\nabla^2 u(x)| \leq C |x|^{m+2}.
\]
\item 
Any blowup limit $v$ of $u$ is homogeneous of degree $m+2$ and solves 
\[
\Delta v = H \chi_{C} \text{ in $\mR^n$}
\]
where $C =  \supp(v)$ is a conic set.
\end{itemize}
Since $0 \in \p D$ was assumed to be an edge point, after a rotation we may arrange that the blowup of $D$ at $0$ (i.e.\ the limit of $r^{-1}(D \cap B_r)$ as $r \to 0$) is $(S \times \mR^{n-2}) \cap B_1$ where $S$ is a closed sector in $\mR^2$ with angle $\neq \pi$. Then $C \subset S \times \mR^{n-2}$, and the nondegeneracy statement in Theorem \ref{thm_fbp_steps} implies that $C = S \times \mR^{n-2}$.

Now we wish to apply the dimension reduction argument of Federer as in \cite{Weiss1999} or \cite[Lemma 10.9]{Velichkov2023}. Fix a point $e = e_n \in \p C$, so that the blowup of $C$ at $e$ is $S \times \mR^{n-2}$. Suppose that $H$ has a zero at $e$ of order $k$ (then $0 \leq k \leq m$). By using the optimal regularity result for $v$ at $e$, we have that 
\[
|v(e+z)| + |z| \,|\nabla v(e+z)| + |z|^2 \, |\nabla^2 v(e+z)| \leq C |z|^{k+2}.
\]
We take a second blowup at $e$ defined via the sequence 
\[
v_r(x) = \frac{v(e + rx)}{r^{k+2}}.
\]
By compactness there is a blowup limit $w$ of $v_r$, and by Weiss monotonicity as before $w$ is homogeneous of degree $k+2$ and satisfies 
\[
\Delta w = P \chi_{\mathrm{supp}(w)}
\]
where $H(e+z) = P(z) + O(|z|^{k+1})$. Note that $P$ is a harmonic homogeneous polynomial of order $k$ since $H$ is harmonic. Moreover, the fact that the blowup of $C$ at $e$ is $S \times \mR^{n-2}$ together with nondegeneracy imply that $\mathrm{supp}(w) = S \times \mR^{n-2}$.

The new input is to show that $w$ is independent of $x_n$. To see this, we observe that the homogeneity of $v$ gives 
\[
r^{k+1} (e+rx) \cdot \nabla v_r(x) = (e+rx) \cdot \nabla v(e+rx) = (m+2) v(e+rx) = (m+2) r^{k+2} v_r(x).
\]
Thus 
\[
e \cdot \nabla v_r(x) = (m+2) r v_r(x) - rx \cdot \nabla v_r(x).
\]
By taking the limit as $r \to 0$, one has 
\[
e \cdot \nabla w(x) = 0.
\]
Thus $w$ is indeed independent of $x_n$. Since $\chi_{\mathrm{supp}(w)}$ is also independent of $x_n$, the same is true for $P$. Thus, writing $x = (x',x_n)$, we have 
\[
\Delta w(x') = P(x') \chi_{S \times \mR^{n-3}}(x') \text{ in $\mR^{n-1}$}.
\]
Thus we have reduced matters into a problem in $\mR^{n-1}$. We can repeat this procedure until we arrive at the problem 
\[
\Delta w = P \chi_{S} \text{ in $\mR^2$}.
\]
Now the two-dimensional result, Lemma \ref{lemma_twodim_sector}, implies that $S$ must be a half-space, which contradicts our assumption that $0$ was an edge point.
\end{proof}

Finally, we prove the theorems related to inverse scattering problems.

\begin{proof}[Proof of Theorems \ref{thm_main_twodim} and \ref{thm_main_ndim}]
As discussed in the introduction (see \eqref{eq_fbp_first}), we can reduce matters to the equation 
\[
(\Delta+k^2+q)w = f \chi_D \text{ in $\Omega$}, \qquad w = 0 \text{ outside $D$},
\]
where $f := -h u_0$. The contrast $h$ is $C^{\alpha}$ near $0$ with $h(0) \neq 0$ by \eqref{nondegen}, and $u_0$ solves $(\Delta+k^2) u_0 = 0$ in $\mR^n$. Thus, at points $z \in \p D$ near $0$, one has $h(z) \neq 0$ and $u_0(z+x) = H(x) + R(x)$ where $H$ is a harmonic homogeneous polynomial of order $m$ and $|R(x)| \leq C|x|^{m+1}$ (here $H$, $R$ and $m$ depend on $z$). In fact, $H$ is the first nontrivial homogeneous polynomial in the Taylor expansion of $u_0(z+x)$, and the equation $\Delta H = 0$ is obtained from $(\Delta+k^2)u_0 = 0$ by looking at blowups of $u_0(z+x)$ of order $m+2$.

After this reduction, Theorems \ref{thm_main_twodim} and \ref{thm_main_ndim} follow from the corresponding free boundary results in Theorem \ref{thm_fb_2d} and \ref{thm_fb_polyhedron}.
\end{proof}

\begin{Remark} \label{rem_eh_assumption}
The main theorems on inverse scattering were proved under the condition \eqref{nondegen}, which is the case $l = 0$ of \cite[Assumption (a)]{ElschnerHu2}. That assumption in principle includes the case $l \geq 1$, but in that case we can show that the boundary is necessarily $C^1$ and thus the conclusion of the theorems holds automatically.

Indeed, suppose that \cite[Assumption (a)]{ElschnerHu2} holds for some $l \geq 1$ at point $O = 0$. Writing $f = q-1$, this means that for some $s, \eps > 0$ one has 
\[
f \in C^{l,s}(\ol{D \cap B_{\eps}}) \cap W^{l,\infty}(B_{\eps})
\]
and that, after renaming coordinates if necessary, $\p^{\alpha+e_n} f(0) \neq 0$ for some $\alpha$ with $|\alpha|=l-1$ (we may assume that $\p^{\beta} f(0) = 0$ for $|\beta| \leq l-1$). Since it was assumed in \cite{ElschnerHu2} that $q=1$ outside $D$, and since $f \in C^{l-1}(B_{\eps})$, it follows that $\p^{\beta} f|_{\p D \cap B_{\eps}} = 0$ for $|\beta| \leq l-1$.

Using the implicit function theorem to $\p^{\alpha} \tilde{f}$ where $\tilde{f}$ is a $C^{l}$ extension of $f$ from $\ol{D \cap B_{\eps}}$ to $B_{\eps}$, we see that there is a $C^1$ function $\eta$ near $0'$ in $\mR^{n-1}$ such that 
\[
\p^{\alpha} \tilde{f} = 0 \text{ near $0$} \quad \Longleftrightarrow \quad x_n = \eta(x').
\]
Since $\p^{\alpha} f|_{\p D \cap B_{\eps}} = 0$, it follows that $\p D$ near $0$ is contained in the graph of the $C^1$ function $\eta$. In particular, all tangent vectors of $\p D$ at $0$ must be contained in a fixed hyperplane, so $\p D$ cannot have a corner point at $0$ if $n=2$ or an edge point at $0$ if $n=3$. This shows that if \cite[Assumption (a)]{ElschnerHu2} holds for some $l \geq 1$, then the boundary has to be $C^1$.
\end{Remark}

\section*{Acknowledgments}

\noindent  
Salo was partly supported by the Research Council of Finland (Centre of Excellence in Inverse Modelling and Imaging and FAME Flagship, grants 353091 and 359208). Shahgholian was supported by Swedish Research Council (grant no. 2021-03700).

\section*{Declarations}

\noindent {\bf  Data availability statement:} All data needed are contained in the manuscript.

\medskip
\noindent {\bf  Funding and/or Conflicts of interests/Competing interests:} The authors declare that there are no financial, competing or conflict of interests.

\begin{bibdiv}
\begin{biblist}

\bib{Alessandrini1999}{article}{
   author={Alessandrini, Giovanni},
   title={Generic uniqueness and size estimates in the inverse conductivity
   problem with one measurement},
   note={Boundary value problems for elliptic and parabolic operators
   (Catania, 1998)},
   journal={Matematiche (Catania)},
   volume={54},
   date={1999},
   number={suppl.},
   pages={5--14},
   issn={0373-3505},
   review={\MR{1749818}},
}
\bib{AlessandriniIsakov}{article}{
   author={Alessandrini, Giovanni},
   author={Isakov, Victor},
   title={Analyticity and uniqueness for the inverse conductivity problem},
   language={English, with English and Italian summaries},
   journal={Rend. Istit. Mat. Univ. Trieste},
   volume={28},
   date={1996},
   number={1-2},
   pages={351--369 (1997)},
   issn={0049-4704},
   review={\MR{1463923}},
}
\bib{And-Sh-W2012}{article}{
   author={Andersson, John},
   author={Shahgholian, Henrik},
   author={Weiss, Georg S.},
   title={On the singularities of a free boundary through Fourier expansion},
   journal={Invent. Math.},
   volume={187},
   date={2012},
   number={3},
   pages={535--587},
   issn={0020-9910},
   review={\MR{2891877}},
   doi={10.1007/s00222-011-0336-5},
}

\bib{AnderssonLindgrenShahgholian}{article}{
   author={Andersson, John},
   author={Lindgren, Erik},
   author={Shahgholian, Henrik},
   title={Optimal regularity for the no-sign obstacle problem},
   journal={Comm. Pure Appl. Math.},
   volume={66},
   date={2013},
   number={2},
   pages={245--262},
   issn={0010-3640},
   review={\MR{2999297}},
   doi={10.1002/cpa.21434},
}

\bib{AthanasopoulosCaffarelliSalsa}{article}{
   author={Athanasopoulos, I.},
   author={Caffarelli, L. A.},
   author={Salsa, S.},
   title={The free boundary in an inverse conductivity problem},
   journal={J. Reine Angew. Math.},
   volume={534},
   date={2001},
   pages={1--31},
   issn={0075-4102},
   review={\MR{1831629}},
   doi={10.1515/crll.2001.033},
}

\bib{BlastenLiu1}{article}{
   author={Bl\aa sten, Eemeli},
   author={Liu, Hongyu},
   title={On vanishing near corners of transmission eigenfunctions},
   journal={J. Funct. Anal.},
   volume={273},
   date={2017},
   number={11},
   pages={3616--3632},
   issn={0022-1236},
   review={\MR{3706612}},
   doi={10.1016/j.jfa.2017.08.023},
}
\bib{BlastenLiu2}{article}{
   author={Bl\aa sten, Emilia L. K.},
   author={Liu, Hongyu},
   title={Scattering by curvatures, radiationless sources, transmission
   eigenfunctions, and inverse scattering problems},
   journal={SIAM J. Math. Anal.},
   volume={53},
   date={2021},
   number={4},
   pages={3801--3837},
   issn={0036-1410},
   review={\MR{4283699}},
   doi={10.1137/20M1384002},
}
\bib{BlastenPaivarintaSylvester}{article}{
   author={Bl\aa sten, Eemeli},
   author={P\"{a}iv\"{a}rinta, Lassi},
   author={Sylvester, John},
   title={Corners always scatter},
   journal={Comm. Math. Phys.},
   volume={331},
   date={2014},
   number={2},
   pages={725--753},
   issn={0010-3616},
   review={\MR{3238529}},
   doi={10.1007/s00220-014-2030-0},
}

\bib{CaffarelliFriedman}{article}{
   author={Caffarelli, Luis A.},
   author={Friedman, Avner},
   title={The free boundary in the Thomas-Fermi atomic model},
   journal={J. Differential Equations},
   volume={32},
   date={1979},
   number={3},
   pages={335--356},
   issn={0022-0396},
   review={\MR{535167}},
   doi={10.1016/0022-0396(79)90038-X},
}
\bib{CaffarelliFriedman_superlinear}{article}{
   author={Caffarelli, Luis A.},
   author={Friedman, Avner},
   title={Partial regularity of the zero-set of solutions of linear and
   superlinear elliptic equations},
   journal={J. Differential Equations},
   volume={60},
   date={1985},
   number={3},
   pages={420--433},
   issn={0022-0396},
   review={\MR{811775}},
   doi={10.1016/0022-0396(85)90133-0},
}
\bib{CakoniHaddar}{article}{
   author={Cakoni, Fioralba},
   author={Haddar, Houssem},
   title={Transmission eigenvalues in inverse scattering theory},
   conference={
      title={Inverse problems and applications: inside out. II},
   },
   book={
      series={Math. Sci. Res. Inst. Publ.},
      volume={60},
      publisher={Cambridge Univ. Press, Cambridge},
   },
   date={2013},
   pages={529--580},
   review={\MR{3135766}},
}

\bib{CakoniVogelius}{article}{
   author={Cakoni, Fioralba},
   author={Vogelius, Michael S.},
   title={Singularities almost always scatter: regularity results for
   non-scattering inhomogeneities},
   journal={Comm. Pure Appl. Math.},
   volume={76},
   date={2023},
   number={12},
   pages={4022--4047},
   issn={0010-3640},
   review={\MR{4655358}},
   doi={10.1002/cpa.22117},
}
\bib{CakoniVogeliusXiao2023}{article}{
   author={Cakoni, Fioralba},
   author={Vogelius, Michael S.},
   author={Xiao, Jingni},
   title={On the regularity of non-scattering anisotropic inhomogeneities},
   journal={Arch. Ration. Mech. Anal.},
   volume={247},
   date={2023},
   number={3},
   pages={Paper No. 31, 15},
   issn={0003-9527},
   review={\MR{4571295}},
   doi={10.1007/s00205-023-01863-y},
}
\bib{ColtonPaivarintaSylvester}{article}{
   author={Colton, David},
   author={P\"{a}iv\"{a}rinta, Lassi},
   author={Sylvester, John},
   title={The interior transmission problem},
   journal={Inverse Probl. Imaging},
   volume={1},
   date={2007},
   number={1},
   pages={13--28},
   issn={1930-8337},
   review={\MR{2262743}},
   doi={10.3934/ipi.2007.1.13},
}

\bib{ElschnerHu1}{article}{
   author={Elschner, Johannes},
   author={Hu, Guanghui},
   title={Corners and edges always scatter},
   journal={Inverse Problems},
   volume={31},
   date={2015},
   number={1},
   pages={015003, 17},
   issn={0266-5611},
   review={\MR{3302364}},
   doi={10.1088/0266-5611/31/1/015003},
}

\bib{ElschnerHu2}{article}{
   author={Elschner, Johannes},
   author={Hu, Guanghui},
   title={Acoustic scattering from corners, edges and circular cones},
   journal={Arch. Ration. Mech. Anal.},
   volume={228},
   date={2018},
   number={2},
   pages={653--690},
   issn={0003-9527},
   review={\MR{3766986}},
   doi={10.1007/s00205-017-1202-4},
}

\bib{GilbargTrudinger}{book}{
   author={Gilbarg, David},
   author={Trudinger, Neil S.},
   title={Elliptic partial differential equations of second order},
   series={Classics in Mathematics},
   note={Reprint of the 1998 edition},
   publisher={Springer-Verlag, Berlin},
   date={2001},
   pages={xiv+517},
   isbn={3-540-41160-7},
   review={\MR{1814364}},
}
\bib{Grisvard}{book}{
   author={Grisvard, P.},
   title={Elliptic problems in nonsmooth domains},
   series={Monographs and Studies in Mathematics},
   volume={24},
   publisher={Pitman (Advanced Publishing Program), Boston, MA},
   date={1985},
   pages={xiv+410},
   isbn={0-273-08647-2},
   review={\MR{0775683}},
}
\bib{HuSaloVesalainen}{article}{
   author={Hu, Guanghui},
   author={Salo, Mikko},
   author={Vesalainen, Esa V.},
   title={Shape identification in inverse medium scattering problems with a
   single far-field pattern},
   journal={SIAM J. Math. Anal.},
   volume={48},
   date={2016},
   number={1},
   pages={152--165},
   issn={0036-1410},
   review={\MR{3439763}},
   doi={10.1137/15M1032958},
}

\bib{KarpShahgholian}{article}{
   author={Karp, Lavi},
   author={Shahgholian, Henrik},
   title={On the optimal growth of functions with bounded Laplacian},
   journal={Electron. J. Differential Equations},
   date={2000},
   pages={No. 03, 9},
   review={\MR{1735060}},
}
\bib{KinderlehrerNirenberg}{article}{
   author={Kinderlehrer, D.},
   author={Nirenberg, L.},
   title={Regularity in free boundary problems},
   journal={Ann. Scuola Norm. Sup. Pisa Cl. Sci. (4)},
   volume={4},
   date={1977},
   number={2},
   pages={373--391},
   issn={0391-173X},
   review={\MR{440187}},
}
\bib{KLSS2024}{article}{
   author={Kow, Pu-Zhao},
   author={Larson, Simon},
   author={Salo, Mikko},
   author={Shahgholian, Henrik},
   title={Quadrature domains for the Helmholtz equation with applications to
   non-scattering phenomena},
   journal={Potential Anal.},
   volume={60},
   date={2024},
   number={1},
   pages={387--424},
   issn={0926-2601},
   review={\MR{4696043}},
   doi={10.1007/s11118-022-10054-5},
}
\bib{KSS2024a}{article}{
   author={Kow, Pu-Zhao},
   author={Salo, Mikko},
   author={Shahgholian, Henrik},
   title={A minimization problem with free boundary and its application to
   inverse scattering problems},
   journal={Interfaces Free Bound.},
   volume={26},
   date={2024},
   number={3},
   pages={415--471},
   issn={1463-9963},
   review={\MR{4762088}},
   doi={10.4171/ifb/515},
}
\bib{KSS2024b}{article}{
   author={Kow, Pu-Zhao},
   author={Salo, Mikko},
   author={Shahgholian, Henrik},
   title={On scattering behavior of corner domains with anisotropic
   inhomogeneities},
   journal={SIAM J. Math. Anal.},
   volume={56},
   date={2024},
   number={4},
   pages={4834--4853},
   issn={0036-1410},
   review={\MR{4768461}},
   doi={10.1137/23M1603029},
}
\bib{LiHuYang}{article}{
   author={Li, Long},
   author={Hu, Guanghui},
   author={Yang, Jiansheng},
   title={Interface with weakly singular points always scatter},
   journal={Inverse Problems},
   volume={34},
   date={2018},
   number={7},
   pages={075002, 13},
   issn={0266-5611},
   review={\MR{3801096}},
   doi={10.1088/1361-6420/aabe56},
}
\bib{LiHuYang2023}{article}{
   author={Li, Long},
   author={Hu, Guanghui},
   author={Yang, Jiansheng},
   title={Piecewise-analytic interfaces with weakly singular points of
   arbitrary order always scatter},
   journal={J. Funct. Anal.},
   volume={284},
   date={2023},
   number={5},
   pages={Paper No. 109800, 31},
   issn={0022-1236},
   review={\MR{4521735}},
   doi={10.1016/j.jfa.2022.109800},
}
\bib{Liu_survey}{article}{
   author={Liu, Hongyu},
   title={On local and global structures of transmission eigenfunctions and
   beyond},
   journal={J. Inverse Ill-Posed Probl.},
   volume={30},
   date={2022},
   number={2},
   pages={287--305},
   issn={0928-0219},
   review={\MR{4401763}},
   doi={10.1515/jiip-2020-0099},
}\bib{LiuTsou2020}{article}{
   author={Liu, Hongyu},
   author={Tsou, Chun-Hsiang},
   title={Stable determination of polygonal inclusions in Calder\'{o}n's problem
   by a single partial boundary measurement},
   journal={Inverse Problems},
   volume={36},
   date={2020},
   number={8},
   pages={085010, 23},
   issn={0266-5611},
   review={\MR{4149843}},
   doi={10.1088/1361-6420/ab9d6b},
}
\bib{PSU}{book}{
   author={Petrosyan, Arshak},
   author={Shahgholian, Henrik},
   author={Uraltseva, Nina},
   title={Regularity of free boundaries in obstacle-type problems},
   series={Graduate Studies in Mathematics},
   volume={136},
   publisher={American Mathematical Society, Providence, RI},
   date={2012},
   pages={x+221},
   isbn={978-0-8218-8794-3},
   review={\MR{2962060}},
   doi={10.1090/gsm/136},
}
\bib{PaivarintaSaloVesalainen}{article}{
   author={P\"{a}iv\"{a}rinta, Lassi},
   author={Salo, Mikko},
   author={Vesalainen, Esa V.},
   title={Strictly convex corners scatter},
   journal={Rev. Mat. Iberoam.},
   volume={33},
   date={2017},
   number={4},
   pages={1369--1396},
   issn={0213-2230},
   review={\MR{3729603}},
   doi={10.4171/RMI/975},
}
\bib{SaloShahgholian}{article}{
   author={Salo, Mikko},
   author={Shahgholian, Henrik},
   title={Free boundary methods and non-scattering phenomena},
   journal={Res. Math. Sci.},
   volume={8},
   date={2021},
   number={4},
   pages={Paper No. 58, 19},
   issn={2522-0144},
   review={\MR{4323345}},
   doi={10.1007/s40687-021-00294-z},
}
\bib{MR0304972}{book}{
   author={Stein, Elias M.},
   author={Weiss, Guido},
   title={Introduction to Fourier analysis on Euclidean spaces},
   note={Princeton Mathematical Series, No. 32},
   publisher={Princeton University Press, Princeton, N.J.},
   date={1971},
   pages={x+297},
   review={\MR{0304972}},
}
\bib{Velichkov2023}{book}{
   author={Velichkov, Bozhidar},
   title={Regularity of the one-phase free boundaries},
   series={Lecture Notes of the Unione Matematica Italiana},
   volume={28},
   publisher={Springer, Cham},
   date={[2023] \copyright 2023},
   pages={xiii+246},
   isbn={978-3-031-13237-7},
   isbn={978-3-031-13238-4},
   review={\MR{4807210}},
   doi={10.1007/978-3-031-13238-4},
}
\bib{VogeliusXiao2021}{article}{
   author={Vogelius, Michael},
   author={Xiao, Jingni},
   title={Finiteness results concerning nonscattering wave numbers for
   incident plane and Herglotz waves},
   journal={SIAM J. Math. Anal.},
   volume={53},
   date={2021},
   number={5},
   pages={5436--5464},
   issn={0036-1410},
   review={\MR{4319099}},
   doi={10.1137/20M1367854},
}
\bib{Weiss1999}{article}{
   author={Weiss, Georg Sebastian},
   title={Partial regularity for a minimum problem with free boundary},
   journal={J. Geom. Anal.},
   volume={9},
   date={1999},
   number={2},
   pages={317--326},
   issn={1050-6926},
   review={\MR{1759450}},
   doi={10.1007/BF02921941},
}

\bib{Yeressian}{article}{
   author={Yeressian, Karen},
   title={Obstacle problem with a degenerate force term},
   journal={Anal. PDE},
   volume={9},
   date={2016},
   number={2},
   pages={397--437},
   issn={2157-5045},
   review={\MR{3513139}},
   doi={10.2140/apde.2016.9.397},
}

\end{biblist}
\end{bibdiv}

\end{document}